\documentclass{amsart}

\usepackage{latexsym}
\usepackage{amsfonts}
\usepackage{amssymb}
\usepackage{amsmath}
\usepackage{amscd}
\usepackage{amsthm}
\usepackage[T1]{fontenc}
\usepackage{delarray} 	
\usepackage{latexsym}  		
\usepackage{enumerate}		
\usepackage[hypertex]{hyperref}

\theoremstyle{plain}
\newtheorem{theo}{Theorem}[section]
\newtheorem*{theo*}{Theorem}
\newtheorem{cor}[theo]{Corollary}
\newtheorem*{cor*}{Corollary}

\newtheorem{main}{Theorem}
\newtheorem{lemma}[theo]{Lemma}
\newtheorem{prop}[theo]{Proposition}

\theoremstyle{definition}
\newtheorem{dfn}[theo]{Definition}
\newtheorem{propdef}[theo]{Proposition/Definition}
\newtheorem*{nota}{Notation}
\newtheorem{cont}[theo]{Context}
\newtheorem{rem}[theo]{Remark}
\newtheorem{ex}[theo]{Example}

\newcommand{\f}{\mathfrak}
\newcommand{\mb}{\mathbb}

\newcommand{\mf}{\mathbf}
\newcommand{\mc}{\mathcal}

\newcommand{\N}{\mathbb{N}}                 			
\newcommand{\Z}{\mathbb{Z}}                 			
\newcommand{\Q}{\mathbb{Q}}                 			
\newcommand{\R}{\mathbb{R}}                 			
                 			
\newcommand{\F}{\mathbb{F}}                 			

\renewcommand{\P}{\mathcal{P}}
\newcommand{\ox}{\otimes}

\renewcommand{\O}{\mathcal{O}}
\newcommand{\Fc}{\mathcal{F}}
\newcommand{\Cc}{\mathcal{C}}
\newcommand{\Lc}{\mathcal{L}}
\renewcommand{\Mc}{\mathcal{M}}
\newcommand{\Gc}{\mathcal{G}}
\newcommand{\Kc}{\mathcal{K}}

\newcommand{\foral}{\; \forall \;}

\newcommand{\Ra}{\Rightarrow}
\newcommand{\Lra}{\Leftrightarrow}
\newcommand{\WD}[1]{#1}
\newcommand{\cln}[1]{[ #1 ]_{\operatorname{num}}}
\newcommand{\NeSe}[1]{N^1( #1 )}

\DeclareMathOperator{\im}{im}										
							
\DeclareMathOperator{\Spec}{Spec}

\DeclareMathOperator{\ch}{char}

\DeclareMathOperator{\codim}{codim}
\DeclareMathOperator{\rk}{rk}
\DeclareMathOperator{\divd}{div}
\DeclareMathOperator{\id}{id}

\DeclareMathOperator{\vol}{vol}

\DeclareMathOperator{\Aut}{Aut}
\DeclareMathOperator{\Gal}{Gal}
\DeclareMathOperator{\Div}{Div}
\DeclareMathOperator{\DIV}{\mathbf{Div}}

\DeclareMathOperator{\Pic}{Pic}
\DeclareMathOperator{\PIC}{\mathbf{Pic}}

\DeclareMathOperator{\codepth}{codepth}
\DeclareMathOperator{\depth}{depth}

\DeclareMathOperator{\Eff}{Eff}

\DeclareMathOperator{\GIDiv}{GIDiv}
\DeclareMathOperator{\GIDIV}{\mf{GIDiv}}

\title{A Chebotarev-type density theorem for divisors on algebraic varieties}
\author{Armin Holschbach}

\usepackage[active]{srcltx}

\begin{document}

\begin{abstract}
Let $Z \to X$ be a finite branched Galois cover of normal projective geometrically integral varieties of dimension $d \geq 2$ over a perfect field $k$. For such a cover, we prove a Chebotarev-type density result describing the decomposition behaviour of geometrically integral Cartier divisors.
As an application, we classify Galois covers among all finite branched covers of a given normal geometrically integral variety $X$ over $k$ by the decomposition behaviour of points of a fixed codimension $r$ with $0 < r < \dim X$.

\end{abstract}

\maketitle

\markright{A CHEBOTAREV-TYPE DENSITY THEOREM FOR DIVISORS}
\pagestyle{headings}

\setcounter{tocdepth}{1}
\tableofcontents

\section{Introduction}

Let $Z \to X$ be a finite branched Galois cover (with Galois group $G$) of normal varieties over a field $k$, i.e.\ a finite dominant morphism together with a finite group $G \subseteq \Aut(Z)$  such that $X$ is (isomorphic to) the quotient scheme $Z/G$. The examination of the decomposition behaviour of points of $X$ in $Z$ is one of the central tools in the study of such covers.

For closed points on $X$, the decomposition behaviour depends largely on the structure of $k$. If $k$ is algebraically closed, every closed point of $X$ that is unramified in $Z$ splits completely for obvious reasons; if $k$ is a finite field, the asymptotic decomposition behaviour of closed points is described by the Chebotarev density theorem in its extended version by Serre (\cite[Theorem 7]{Ser65}).  More precisely, this theorem gives the Dirichlet density of the set of closed points whose Frobenius class lies in a given conjugacy class of $G$.

If we have $d= \dim X  \geq 2$ and take a look at the decomposition behaviour of non-closed points, the decomposition behaviour is mostly independent of $k$. In fact, it can be deduced from \cite{Sch34} that for any given integer $r$ with $0 < r < d$ and any subgroup $H$ of $G$, $H$ occurs infinitely often as decomposition group of a point on $Z$ of codimension $r$ (see Remark \ref{remschmidt}). This fact, however, does not give any information about the asymptotic decomposition behaviour of such points.

In this paper, we will describe a Chebotarev-type density theorem for the decomposition behaviour of points on $X$ of codimension $1$, i.e.\ Weil prime divisors. For a given conjugacy class $\Cc$ of subgroups of $G$, we determine the density of the set of such points $x$ that have decomposition class $\Cc$, i.e.\ such that $\Cc$ is the conjugacy class of the decomposition group of a point in $Z$ lying over $x$.

The basic idea is the following: We choose a fixed ample Cartier divisor $D_0$ on $X$ and consider the Weil prime divisors that are linearly equivalent to some multiple of $D_0$, identifying Cartier divisors with their corresponding Weil divisors.  For each positive integer $m$, the prime divisors in the linear system $|mD_0|$ (considered as a projective space) form an open subvariety by Bertini's theorem. The prime divisors with decomposition class $\Cc$ form a subvariety thereof, and the quotient of the dimensions of these two varieties (prime divisors with decomposition class $\Cc$ versus all prime divisors) converges to a limit as $m$ approaches infinity, measuring the density of prime divisors with this decomposition class.

In order to get this machinery to work, we have to specify our setting: We will consider finite branched Galois covers $Z \to X$ with Galois group $G$ of normal, projective, geometrically integral varieties over a perfect field $k$.

For any Cartier divisor $D$ on $X$ and any field extension $K|k$, let $P_D(K)$ be the set consisting of all geometrically integral effective Cartier divisors on $X_K := X \times_k K$ that are linearly equivalent to $D_K$, the flat pullback of $D$ under the canonical projection $X_K \to X$. For fixed $D$, this defines a functor $P_D$.

For a geometrically integral divisor $D'$ on $X$, we define its \emph{geometric decomposition class} in the Galois cover $Z \to X$ to be the decomposition class of $D'_{\bar k}$ in $Z_{\bar k} \to X_{\bar k}$ , where $\bar k$ is an algebraic closure of $k$. Then for a given Cartier divisor $D$ on $X$ and a conjugacy class $\Cc$ of subgroups of $G$, we define a functor $D^\Cc_D$ from the category of field extensions of $k$ to the category of sets by
\[D^\Cc_D(K) = \biggl\{ D' \in P_D(K) \biggl| \begin{matrix}  D' \text{ is unramified  in } Z_K \to X_K  \text{ and } \\  \text{ has geometric decomposition class } \Cc \end{matrix} \biggr\}.\]

Now we can state our result precisely:

\begin{main}[\textbf{\ref{dens1}}]\label{main1}
Let $Z \to X$ be a finite branched Galois cover with Galois group $G$ of normal, geometrically integral projective varieties over a perfect field $k$, and assume that $\dim X = d\geq 2$ and $Z$ has a $G$-equivariant resolution of singularities. Let $\Cc$ be a conjugacy class of subgroups of $G$. Then for any Cartier divisor $D$ on $X$, there exist a quasiprojective variety $\mf P_D$ and a reduced subscheme $\mf D^\Cc_D$ of $\mf P_D$ representing the functors $P_D$ and $D^\Cc_D$ defined above.

If $D_0$ is an ample Cartier divisor that is (linearly equivalent to) the push-forward of an effective Cartier divisor on $Z$, then
\[ \lim_{m \to \infty} \frac{\dim \mf{D}^\Cc_{mD_0}}{\dim \mf{P}_{mD_0}} = \frac{1}{(G:\Cc)^{d-1}},\]
where $(G : \Cc)$ is defined to be $(G :H)$ for any representative $H$ of $\Cc$. If we just assume $D_0$ to be ample, the statement still holds if we replace $D_0$ by $\# G \cdot D_0$ or regard the limit superior instead of the limit.
\end{main}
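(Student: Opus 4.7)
The plan is to establish representability first, then compute the asymptotic dimensions via Riemann-Roch augmented by a Brunn-Minkowski / Khovanskii-Teissier estimate on the intermediate cover $Z/H \to X$.

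\textbf{Representability.} I would realize $\mf P_D$ as the Zariski-open subscheme of the linear system $|D| = \mb P(H^0(X, \O(D)))$ cut out by geometric integrality of the universal divisor; flat base change $H^0(X_K, \O(D_K)) = K \ox_k H^0(X, \O(D))$ supplies functoriality in $K$. For $\mf D^\Cc_D$, I would pull the universal divisor back to the $G$-equivariant smooth model $\wt Z$ and impose (a) the open condition of being unramified in $\wt Z \to X$, and (b) the condition that the stabilizers of the components of $g^{-1}(D')$ lie in $\Cc$; constructibility and semi-continuity in the $G$-equivariant Hilbert scheme framework cut out a locally closed reduced subscheme of $\mf P_D$.

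\textbf{Asymptotic dimensions.} Asymptotic Riemann-Roch for the ample $D_0$ gives $\dim \mf P_{mD_0} = \tfrac{(D_0^d)}{d!}m^d + O(m^{d-1})$, since the geometrically integral locus is dense open in $|mD_0|$ for $m \gg 0$ by Bertini. For $\mf D^\Cc_{mD_0}$, fix $H \in \Cc$, set $Y := Z/H$ and $g_Y : Y \to X$ of degree $n := (G:\Cc)$; divisors in $\mf D^\Cc_{mD_0}$ correspond, up to a finite factor $[N_G(H):H]$, to prime divisors $\bar E$ on $Y$ with $g_{Y,*}\bar E \sim mD_0$ and $g_Y|_{\bar E}$ birational onto its image, and such $\bar E$ have Picard class in the coset $g_{Y,*}^{-1}(m[D_0])$. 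For the \emph{upper bound}: the orbit $\{\tau^* \pi_H^* \bar E\}_{\tau \in G/H}$ on $\wt Z$ consists of $n$ effective classes of equal volume summing to $m g^* D_0$, so Brunn-Minkowski concavity of $L \mapsto \mr{vol}(L)^{1/d}$ yields $\mr{vol}(\pi_H^* \bar E) \leq \mr{vol}(m g^* D_0)/n^d = m^d |G|(D_0^d)/n^d$; dividing by $|H|$ and applying asymptotic Riemann-Roch on $Y$ gives $\dim |[\bar E]| \leq \tfrac{m^d (D_0^d)}{d!\, n^{d-1}} + O(m^{d-1})$. For the \emph{lower bound}: the hypothesis $D_0 \sim g_* E_0$ supplies $\bar E_0 := \pi_{H,*} E_0$ on $Y$ with $g_{Y,*}\bar E_0 = D_0$, hence the consistent sequence $m\bar E_0 \in g_{Y,*}^{-1}(m[D_0])$; perturbing by classes in $\ker g_{Y,*}$ to approach the Brunn-Minkowski maximum and then applying Bertini to extract geometrically integral members with residue degree one provides the matching lower bound.

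\textbf{Role of the hypothesis and main obstacle.} The pushforward hypothesis is precisely what makes the numerically ``optimal'' $\Q$-class $m g_Y^* D_0/n$ representable by actual effective integral classes consistently in $m$; without it one must restrict to the subsequence $\#G \mid m$ (equivalently pass to $\#G \cdot D_0$), explaining the alternative formulations in the statement. The principal difficulty of the proof is the lower bound, which simultaneously requires approximating the Brunn-Minkowski maximum within the discrete lattice of admissible Picard classes (up to lower-order terms) and guaranteeing sufficiently many prime geometrically integral divisors of residue degree one via Bertini-type arguments on $Y \to X$; this is where the Chebotarev-type content of the theorem really resides.
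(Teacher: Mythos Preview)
Your overall architecture matches the paper's: Bertini for the denominator, log-concavity of the volume (your Brunn--Minkowski/Khovanskii--Teissier) for the upper bound on $\dim \mf D^\Cc_{mD_0}$, and the push-forward hypothesis for the lower bound. The representability sketch is also close to what the paper does (it constructs $\mf D^\Cc_D$ explicitly as $\mf S^{Z/H}_D \setminus \bigcup_{Y'} \mf S^{Y'}_D$, using a finite morphism $F'\mapsto \sum_{\bar\sigma\in G/H}\sigma F'$ on $\DIV_{\wt Z/k}$ to produce the closed piece).

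There is, however, a genuine gap in your upper bound. You correctly obtain $\vol_Y(\bar E)\le m^d\,(D_0^d)/n^{d-1}$ from concavity of $\vol^{1/d}$, but then write ``applying asymptotic Riemann--Roch on $Y$ gives $\dim|[\bar E]|\le \tfrac{m^d(D_0^d)}{d!\,n^{d-1}}+O(m^{d-1})$''. This does not follow: the volume is a limit over multiples of a \emph{fixed} class and says nothing about $h^0$ of a single divisor. Here $[\bar E]$ varies with $m$ inside the (possibly infinite) coset $g_{Y,*}^{-1}(m[D_0])$ of $\NeSe{Y}$, and you need a \emph{uniform} bound $h^0(Y,\Lc)\le \tfrac{v_{\max}(1+\varepsilon)}{d!}\,m(\Lc)^d$ valid for all line bundles whose numerical class lies in a fixed cone, once $m(\Lc)$ is large. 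That uniformity is the real work of the paper's Proposition~\ref{Hvol} (a five-step compactness/approximation argument in $\NeSe{Z}_\R$); without it, an infinite family of classes, each individually small-volume but with uncontrolled $h^0$, could in principle dominate. You also need to absorb $\dim\PIC^\tau$ and, because you pass through the resolution $\wt Z$, the exceptional components $E'_i$, which contribute a bounded shift in degree; these are routine once the uniform $h^0$ bound is in place, but they should be accounted for.

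Your lower bound is closer to complete but still elides the main point. The paper does \emph{not} perturb toward a Brunn--Minkowski maximum; it simply takes $E_m=\lfloor m/n\rfloor f^*D_0+(m-n\lfloor m/n\rfloor)E$ on $Y$, whose class is already optimal since $\cln{f^*D_0}$ realizes equality in the concavity inequality. The delicate step is showing that $f_*$ of a generic member of $|E_m|$ is a \emph{prime} divisor on $X$: Bertini on $Y$ gives $E'$ integral, but $f_*E'$ could be a multiple if $G_{h^*E'}\supsetneq H$. The paper rules this out by a short but essential argument (if every $h^*E'$ were $H'$-invariant for some fixed $H'\supsetneq H$, then differences of divisors in $|E_m|$ would lie in $K(Z)^{H'}$, contradicting very-ampleness). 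Your phrase ``Bertini-type arguments on $Y\to X$'' gestures at this, but it is precisely the Chebotarev content you flag as the principal difficulty, and it deserves an actual argument.
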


Note that the condition that $Z$ has a $G$-equivariant resolution of singularities is automatically fulfilled if $d=2$ or $\ch k = 0$, by Lipman's theorem (\cite{Lip78}) and the equivariant version of Hironaka's theorem (e.g., \cite{AbW97}), respectively.

If we want to extend our result to describe the decomposition behaviour of all prime divisors, we can sort the divisors by degree: Fix an ample Cartier divisor $D_0$ on $X$, and for any Cartier divisor $D$ on $X$, call the intersection number $D_0^{d-1} \cdot D$ the degree of $D$. The corresponding density theorem for this sorting is Theorem \ref{revdens}.

Returning to the case of a finite field $k$, we can actually \emph{count} divisors by their degree, considering prime divisors and their decomposition classes instead of their geometric counterparts. Using Poonen's Bertini theorems over finite fields (\cite{Poo04}) instead of the original Bertini theorem, we can modify our methods to prove the following result:

\begin{main}[\textbf{\ref{dens3}}]\label{main2}
Assume that $k$ is a finite field, and that $g: Z \to X$ and $\Cc$ satisfy the assumptions of Theorem \ref{main1}. For any $r \in \R$, let $p_\#(r)$ be the number of Cartier prime divisors of degree at most $r$, and let $d^\mc{C}_\#(r)$ be the number of those divisors that additionally are unramified in the Galois cover $g:Z\to X$ and have decomposition class $\Cc$. Then
\[\lim_{r\to\infty}\frac{\log d^\mc{C}_\#(r)}{\log p_\#(r)} = \frac{1}{(G:\mc{C})^{d-1}}.\]
\end{main}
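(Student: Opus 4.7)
The strategy is to convert the counting problem into counting $\F_q$-rational points on the moduli spaces $\mf P_D$ and $\mf D^\Cc_D$ provided by Theorem~\ref{main1}, then feed the dimension statement there through Lang--Weil estimates.

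Over $k=\F_q$, the set $\mf P_D(\F_q)$ is in bijection with the geometrically integral effective Cartier divisors on $X$ linearly equivalent to $D$, and $\mf D^\Cc_D(\F_q)$ with those which are additionally unramified in $g$ and have geometric decomposition class $\Cc$. Every such divisor is a Cartier prime divisor, and the Cartier prime divisors which are \emph{not} geometrically integral contribute negligibly in the logarithm: each such divisor of degree $\leq r$ corresponds to a $\Gal(\F_{q^n}/\F_q)$-orbit of size $n\geq 2$ of geometrically integral prime divisors on $X_{\F_{q^n}}$ of degree $\leq r/n$, whose number is bounded by $q^{O((r/n)^d)}$ and thus exponentially smaller than the main term. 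Hence, up to lower order,
\[ p_\#(r) = \sum_{\substack{[D]\in\Pic(X)\\ \deg[D]\leq r}} \#\mf P_D(\F_q), \qquad d^\Cc_\#(r) = \sum_{\substack{[D]\in\Pic(X)\\ \deg[D]\leq r}} \#\mf D^\Cc_D(\F_q). \]
The number of classes contributing is polynomially bounded in $r$ (since $\Pic^0(X)(\F_q)$ is finite and $\NeSe X$ is finitely generated), so its logarithm is $O(\log r)$, absorbed into the error.

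The dominant term arises from $[D]=[m_0D_0]$ with $m_0=\lfloor r/D_0^d\rfloor$. Asymptotic Riemann--Roch gives $\dim\mf P_{m_0D_0}=h^0(X,m_0D_0)-1\sim m_0^dD_0^d/d!$, of order $r^d$; a uniform bound $h^0(X,D)=O(r^d)$ for all $[D]$ with $\deg[D]\leq r$, via Kodaira-type vanishing using the ampleness of $D_0$, shows this growth dominates the whole sum. Since $m_0D_0$ is very ample for $m_0\gg 0$, $\mf P_{m_0D_0}$ is a nonempty open subset of the linear system $|m_0D_0|$ and is geometrically irreducible, so Lang--Weil yields $\#\mf P_{m_0D_0}(\F_q)\sim q^{\dim\mf P_{m_0D_0}}$. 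Combined with the term-by-term upper bound $\#V(\F_q)=O_V(q^{\dim V})$, one obtains
\[ \log p_\#(r) = \dim\mf P_{m_0D_0}\cdot\log q + o\bigl(r^d\log q\bigr), \]
and an analogous analysis for $d^\Cc_\#(r)$ followed by Theorem~\ref{main1} yields the desired ratio $1/(G:\Cc)^{d-1}$.

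The main obstacle will be securing the lower bound $\#\mf D^\Cc_{m_0D_0}(\F_q)\gtrsim q^{\dim\mf D^\Cc_{m_0D_0}}$: whereas $\mf P_D$ is plainly geometrically irreducible, $\mf D^\Cc_D$ need not be, so some geometric input is required to produce a top-dimensional geometrically irreducible component defined over $\F_q$ on which Lang--Weil can act. I would aim to extract such a component from the construction used in the proof of Theorem~\ref{main1}; failing that, the variant of Theorem~\ref{main1} for $\#G\cdot D_0$ should suffice, or else one can pass to a finite extension $\F_{q^n}$ over which all geometric components become rational, noting that the resulting factor of $n$ in $\log\#V(\F_{q^n})$ cancels in the ratio $\log d^\Cc_\#/\log p_\#$.
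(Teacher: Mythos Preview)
Your strategy diverges from the paper's and has gaps on both sides. For the upper bounds, ``Kodaira-type vanishing'' is unavailable in positive characteristic and in any case does not yield the sharp uniform estimate $\max_{\deg D\le r}h^0(X,D)\le(1+\varepsilon)\,r^d/(d!\,\vol(D_0)^{d-1})$ you need to show that the class $[m_0D_0]$ dominates the sum. The term-by-term Lang--Weil bound $\#V(\F_q)=O_V(q^{\dim V})$ is likewise unusable because the implied constant is not uniform in $m$. The paper instead bounds rational points trivially by $\#H^0(Z,\Lc)^H= q^{\dim H^0(Z,\Lc)^H}$ and controls the exponent uniformly over a cone of numerical classes via the volume machinery of Proposition~\ref{Hvol} (packaged as Lemma~\ref{ndenup}); this replaces your vanishing argument and simultaneously supplies the $H$-equivariant upper bound needed for $s^Y_\#$.

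For the lower bounds the paper does not use Lang--Weil at all. For $p_\#$ it invokes Poonen's Bertini theorem over finite fields (Lemma~\ref{ndenlo}) to show that a fixed positive fraction of $|mD_0|(\F_q)$ is geometrically integral, giving $\log_q\#\mf P_{mD_0}(\F_q)\sim h^0(X,mD_0)$ with constants independent of $m$; your Lang--Weil argument would need separate control on the complexity of the complement of $\mf P_{mD_0}$ in $|mD_0|$ as $m$ grows, which you do not address. For $d^\Cc_\#$ the obstacle you flag is genuine and your proposed fixes fail: passing to $\F_{q^n}$ changes the quantities being counted (the statement concerns $\F_q$-prime divisors and their decomposition class over $\F_q$, neither of which is preserved under base change), and nothing in the construction of $\mf D^\Cc_D$ produces a geometrically irreducible top-dimensional component over $\F_q$ on which Lang--Weil could act. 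The paper bypasses $\mf D^\Cc_D$ entirely: it reduces to $s^Y_\#$ with $Y=Z/H$ and \emph{manufactures} split primes by pushing forward from $|mf^*D_0|$ on $Y$. Poonen's theorem applied on $Y$ supplies $\sim q^{h^0(Y,mf^*D_0)}$ prime divisors $E'$, and the argument of Lemma~\ref{lowerbound} (with Proposition~\ref{Hvol} bounding the $H'$-invariant loci for $H\lneq H'\le G$) shows that asymptotically almost all $f_*E'$ are prime on $X$, yielding the required lower bound without any irreducibility hypothesis on $\mf D^\Cc_D$.
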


Neither one of the `densities' used in the theorems above (meaning the expressions on the left hand side of the equations) behaves in the way one would normally expect densities to behave; for example, they do not act additively on a union of disjoint sets. Still, we can derive some analogues of corollaries of the original Chebotarev density theorem. To give an example, let us state a theorem that can be regarded as an analogue of a theorem of M. Bauer (\cite[Theorem 13.9]{Neu99}), which identifies Galois extensions of number fields by the splitting behaviour of their primes.

For a finite branched cover of normal varieties $f: Y \to X$, we say that a point $x \in X$ \emph{splits in $Y$} if it is unramified in $Y$ and there exists a point $y \in Y$ lying over $x$ such that the residue fields at $y$ and $x$ coincide. Then the following holds:

\begin{main}[\textbf{\ref{bauer}}]\label{main3}
Let $X, Y, Z$ be normal geometrically integral quasiprojective varieties of dimension $d \geq 2$ over a field $k$ of characteristic zero; let $f: Y \to X$ be a finite branched cover, $g : Z \to X$ be a finite branched Galois cover. Fix an positive integer $r < d$. Then the following are equivalent:
\begin{enumerate}[(a)]
\item $f: Y \to X$ factors through $g: Z \to X$.
\item Every point $x \in X$ of codimension $r$ that splits in $Y$ splits in $Z$.
\end{enumerate}
\end{main}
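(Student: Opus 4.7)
The direction (a) $\Rightarrow$ (b) is immediate: if $f = g \circ h$ and $x$ splits in $Y$ via $y$ with $k(y) = k(x)$, then $z := h(y) \in Z$ sits in the sandwich $k(x) \subseteq k(z) \subseteq k(y) = k(x)$, forcing equality, while unramifiedness of $x$ in $Z$ follows from that in $Y$ because $h$ is finite.

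For the converse I would argue by contrapositive: assuming $f$ does not factor through $g$, construct a codimension-$r$ point of $X$ that splits in $Y$ but not in $Z$. The first step is to form a common finite branched Galois cover $W \to X$ dominating both $Y$ and $Z$ -- e.g.\ the Galois closure of $Y \times_X Z$, which exists because $\ch k = 0$ -- with group $\Gamma := \Gal(W/X)$; let $H \subseteq \Gamma$ and $N \subseteq \Gamma$ be the subgroups with $Y = W^H$ and $Z = W^N$, where $N$ is normal since $Z/X$ is Galois. Condition (a) is equivalent to $H \subseteq N$, so there exists $h \in H \setminus N$; set $\Cc$ to be the conjugacy class of $\langle h \rangle$. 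By Hilbert's decomposition theory, any codimension-$r$ point of $X$ unramified in $W$ and with decomposition class $\Cc$ must split in $Y$ but not in $Z$: the former because the representative $\langle h \rangle$ is contained in $H$, the latter because $h \notin N$ and the normality of $N$ prevents any conjugate of $\langle h \rangle$ from lying in $N$.

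Next I would reduce to codimension one using Bertini. Compactify $X$ to a normal projective $\tilde X$, extend $W \to X$ to a finite branched Galois cover $\tilde W \to \tilde X$ by normalising in $k(W)$, and fix an ample Cartier divisor $D_0$ on $\tilde X$. Intersecting with a general complete intersection of $r-1$ members of a sufficiently positive $|m D_0|$ would produce a normal geometrically integral subvariety $\tilde X' \subseteq \tilde X$ of dimension $d - r + 1 \geq 2$ meeting $X$ non-trivially. By characteristic-zero Bertini, $\tilde W' := \tilde W \times_{\tilde X} \tilde X'$ can be arranged to be normal and geometrically integral, and $\tilde W' \to \tilde X'$ is then a finite branched Galois cover with the same group $\Gamma$ and subgroups $H, N$ governing the pullbacks of $Y, Z$. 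Theorem \ref{main1} applied to $\tilde W' \to \tilde X'$ and the class $\Cc$ yields, for sufficiently large $m'$, a non-empty $\mf D^\Cc_{m' D_0|_{\tilde X'}}(\bar k)$, and exploiting its positive-dimensionality one arranges that the resulting geometrically integral prime divisor $\bar D'$ on $\tilde X'_{\bar k}$ meets the open set $X'_{\bar k}$, where $X' := X \cap \tilde X'$. Projecting the generic point of $\bar D' \cap X'_{\bar k}$ along $X'_{\bar k} \to X' \hookrightarrow X$ gives a codimension-$r$ point $x$ of $X$ (integral base change and closed immersion preserve dimension).

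The main technical obstacle is to verify that the decomposition class of $x$ in $W \to X$ equals $\Cc$. Compatibility along the closed immersion $\tilde W' \hookrightarrow \tilde W$ is essentially formal, since the $\Gamma$-action restricts and decomposition groups are stabilisers. The delicate point is base change $\bar k / k$: one only has $D_{\bar w'} \subseteq D_{w'}$ a priori, with the quotient measuring how $\Gal(\bar k / k)$-orbits of lifts of $w'$ in $\tilde W'_{\bar k}$ are permuted, so $D_{\bar w'}$ can be strictly smaller than $D_{w'}$. Controlling this discrepancy is the heart of the argument; I expect it to follow from the geometric integrality of $\bar D'$, which forces the $\Gal(\bar k / k)$-orbits above $w'$ to be in bijection with those above $x'$, possibly supplemented by using the positive-dimensional freedom in $\mf D^\Cc_{m' D_0|_{\tilde X'}}$ to avoid small exceptional loci. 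With this compatibility in hand, $x$ has decomposition class $\Cc$ in $W/X$ and the desired contradiction to (b) follows.
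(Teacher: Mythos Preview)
Your setup --- the common Galois cover $W$ with group $\Gamma$, subgroups $H$ and normal $N$, and the reformulation of (a) as $H \subseteq N$ --- matches the paper. Your Bertini complete-intersection cut to reach codimension one is a legitimate alternative to the paper's induction (which at each step finds a single divisor with decomposition class $\{\Gamma\}$ and recurses on its preimage); both produce a Galois cover $\tilde W' \to \tilde X'$ with the full group $\Gamma$, the irreducibility of $\tilde W'$ following from Jouanolou's Bertini theorem since the composite $\tilde W \to \tilde X \hookrightarrow \mathbb P^N$ has image of dimension $\geq 2$ at every stage.

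The gap is exactly where you flag it, and it is genuine. Theorem \ref{main1} hands you a divisor $\bar D'$ on $\tilde X'_{\bar k}$ with \emph{geometric} decomposition class $\Cc$. When you project its generic point to $x' \in \tilde X'$, the decomposition group $G_{w'}$ of a lift $w' \in \tilde W'$ satisfies only $G_{w'} \supseteq G_{\bar w'} \in \Cc$, and the containment can be strict (take $k = \R$, $X' = \Spec \R$, $W' = \Spec \C$: geometrically the decomposition group is trivial, over $k$ it is all of $\Gamma$). Because $N$ is normal, the enlarged $G_{w'}$ still cannot lie inside $N$, so ``$x'$ does not split in $Z$'' survives. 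But ``$x'$ splits in $Y$'' requires some $G_{w'} \subseteq H$, and the enlargement can overshoot $H$ entirely --- whether you take $\Cc$ to be the class of $\langle h\rangle$ or of $H$. Neither the geometric integrality of $\bar D'$ nor the freedom to move within $\mf D^\Cc$ helps: the obstruction is arithmetic, invisible over $\bar k$.

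The paper sidesteps this by never working over $\bar k$. Its inductive step uses the density theorem only for the class $\{\Gamma\}$, where geometric and actual decomposition class coincide trivially (the group cannot grow past $\Gamma$). For the base case $r=1$ it appeals not to Theorem \ref{main1} but to the push-forward construction of Lemma \ref{lowerbound}: a general prime divisor $E'$ on $W/H$ whose pullback to $W$ stays prime pushes forward to a prime divisor on $X$ with decomposition group \emph{equal} to $H$, all over $k$. This is the content of the remark in the proof of Proposition \ref{exptsdec} that ``the proof of Lemma \ref{lowerbound} shows more.'' Replacing your appeal to Theorem \ref{main1} with this construction closes the gap, but at that point your argument has become the paper's.
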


A birational version of Theorem \ref{main3} in the essential case $r=1$ was already proven by F.K.\ Schmidt in \cite{Sch34}.

\vspace{.3cm}

\paragraph{\textbf {Structure of this paper.}} In section \ref{secpfpb}, we review the classical push-forward and pull-back maps of Cartier divisors and construct scheme-theoretic pull-back and push-forward maps for the scheme representing relative effective divisors. Section \ref{secmt} contains the statement of the main theorem and the construction of $\mf P_D$ and $\mf D^\Cc_D$.
In order to determine the asymptotic behaviour of $\dim \mf P_{mD_0}$ and $\dim \mf D^\Cc_{mD_0}$, the notion of the volume of a divisor is needed. In section \ref{secvol}, we recall its definition and consider its behaviour under push-forward and pull-back of divisors. This theory is applied in section \ref{secasy} to finish the proof of the main theorem. The remaining section \ref{secapp} contains modifications and applications of Theorem \ref{main1}, among them Theorems \ref{main2} and \ref{main3}.

\begin{nota}
A variety over a field $k$ will be an integral scheme of finite type over $k$, unless noted otherwise. A finite branched cover of two varieties is a finite dominant generically \'{e}tale morphism.

The function field of an integral scheme $X$ is denoted by $K(X)$. If $x$ is a point on a scheme $X$, then $\kappa(x)$ denotes its residue field.

For two Cartier divisors $D, D'$ on a projective variety $X$, we write $D \equiv D'$ if the two divisors are numerically equivalent. The numerical equivalence class of a divisor $D$ is denoted by $\cln{D}$, the one of a line bundle $\Lc$ is denoted by $\cln{\Lc}$.

The group of Cartier divisors and the group of line bundles on a variety $X$ will be denoted by $\Div(X)$ and $\Pic(X)$, respectively. The N\'{e}ron-Severi group of Cartier divisors on $X$ modulo numerical equivalence is denoted by $\NeSe{X}$; by the N\'{e}ron-Severi theorem (\cite[XIII.5.1]{BGI71}), it is a finitely generated free $\Z$-module. We set $\NeSe{X}_\R := \NeSe{X}\otimes_\Z \R$ and view $\NeSe{X}$ as a lattice in it. Inside $\NeSe{X}_\R$, we define the pseudoeffective cone $\overline\Eff(X)$ to be the closed convex cone generated by the classes of effective Cartier divisors.

For Cartier divisors $D_1, \ldots, D_d$ on a projective variety $X$ of dimension $d$, we denote their intersection number (cf.\ \cite[Definition 2.4.2]{Ful98}) by $D_1 \cdots D_d$. Since this number is invariant under numerical equivalence, we can extend this notation to $\NeSe{X}$ and $\NeSe{X}_\R$.

For a Cartier divisor $D$ on a projective variety $X$ over $k$, we denote the dimension of $H^0(X,\O_X(D))$ over $k$ by $h^0(X,D)$; similarly, for an invertible sheaf $\Lc$ on $X$, we set $h^0(X,\Lc) := \dim H^0(X,\Lc)$.

For $r \in\R$, we set $\lfloor r \rfloor$ and $\lceil r \rceil$ to be the greatest integer less or equal to  $r$ and the smallest integer greater or equal $r$, respectively.
\end{nota}

\paragraph{\textbf{Acknowledgements.}} This paper is mostly adapted from my PhD thesis, and I would like to thank my advisor, Florian Pop, for his constant encouragement and guidance. Thanks also to Alexander Schmidt for many valuable comments and suggestions, and to Ted Chinburg for bringing up a question that led to Theorem \ref{dens2}.

\section{Push-forward and Pull-back}\label{secpfpb}

Though push-forward and pull-back maps on divisors are a very common tool in algebraic geometry, they are most often used in the context of flat morphisms. What we need in the following sections is the notion of push-forward and pull-back for divisors and relative effective divisors in the case of a finite branched cover of normal varieties. For lack of suitable references, we present the needed facts on those push-forward and pull-back maps in the following section. The reader familiar with the standard facts from \cite[{\S}{\S} 1 \& 2]{Ful98} or \cite[{\S} 21]{Gro67} and the notion of relative effective divisors is invited to move forward to Section \ref{secmt} after having read Lemmata \ref{schpb} and \ref{schpf}.

\subsection{Basic properties}
We first recollect some basic properties of pull-back and push-forward maps for divisors, referring to \cite[21.4 \& 21.5]{Gro67} for proofs. Throughout this subsection, let $X, Y$  be normal varieties over a field $k$.

A generically finite dominant morphism $f: Y \to X$ induces a pull-back homomorphism on Cartier divisors $f^*: \Div(X) \to \Div(Y)$ which locally comes from the inclusion $K(X)^\times \hookrightarrow K(Y)^\times$. This homomorphism preserves linear equivalence and thus induces a homomorphism $f^*: \Pic(X) \to \Pic(Y)$, which is nothing else but the usual sheaf-theoretic pull-back map on line bundles.

 A finite dominant morphism $f: Y \to X$ induces a push-forward homomorphism on Cartier divisors $f_*: \Div(Y) \to \Div(X)$ which locally comes from the norm map $N_{K(Y)|K(X)}: K(Y)^\times \to K(X)^\times$.
This homomorphism also preserves linear equivalence and thus induces a homomorphism $\Pic(Y) \to \Pic(X)$.

If we identify Cartier divisors with their corresponding Weil divisors, then both $f^*: \Div(X) \to \Div(Y)$ and $f_*: \Div(Y) \to \Div(X)$ can be regarded as restrictions of analogous maps on Weil divisors. For example, for a Weil prime divisor $W$ on $Y$ one defines its push-forward by $f_*(W) = [K(W):K(f(W))] f(W)$.

In the case when $f: Y \to X$ is a finite branched cover, both the pull-back and the push-forward map exist, so we can talk about their compositions. The composition map $f_* \circ f^*: \Div(X) \to \Div(X)$ is just multiplication by $\deg(f)$. The other composition $f^* \circ f_*: \Div(Y) \to \Div(Y)$ is somewhat more complicated: If  $f: Y \to X$ is a Galois cover with Galois group $G$, then $(f^* \circ f_*) (E) = \sum_{\sigma \in G} \sigma E$ for any $E \in \Div(Y)$. This is a special case of the following

\begin{lemma}\label{pfthrupb}
Let $Z$ be a normal variety over a field $k$ and let $H \subseteq G$ be finite subgroups of $\Aut_k(Z)$ such that the quotient variety $X = Z/G$ exists. Set $Y = Z/H$ and denote the canonical morphisms $Y \to X$, $Z \to X$, and $Z\to Y$ by $f, g$ and $h$, respectively. Then, for any $E \in \Div(Y)$, $D \in \Div(X)$,
\[f_* E = D \quad \Lra \quad \sum_{\bar\sigma \in G/H} \sigma h^* E = g^* D.\]
\end{lemma}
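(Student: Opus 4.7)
The plan is to derive a single scheme-theoretic identity that already contains both directions of the equivalence, then dispatch each direction separately. Specifically, I would first establish that for any $E \in \Div(Y)$,
\[ g^* f_* E \;=\; \sum_{\bar\sigma \in G/H} \sigma\, h^* E \quad \text{in } \Div(Z). \]

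To prove this identity, I would apply the formula $g^* g_* F = \sum_{\sigma \in G} \sigma F$ (recalled just above the lemma for the Galois cover $g$) to $F := h^* E \in \Div(Z)$. Since $h \circ \tau = h$ for every $\tau \in H$, the divisor $h^* E$ is $H$-invariant, so the resulting right-hand side groups into cosets to give $|H| \sum_{\bar\sigma \in G/H} \sigma\, h^* E$. On the other side, using $g = f \circ h$ and hence $g_* = f_* h_*$, together with the analogous formula $h_* h^* E = \deg(h)\, E = |H|\, E$ for the Galois cover $h$, the left-hand side is $|H|\cdot g^* f_* E$. Dividing by $|H|$ in the torsion-free group $\Div(Z)$ yields the identity.

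The forward direction of the lemma is then immediate: apply $g^*$ to $f_* E = D$ and invoke the identity. For the reverse direction, the identity lets me rewrite the hypothesis as $g^*(f_* E - D) = 0$ in $\Div(Z)$, so it remains to show that $g^* \colon \Div(X) \to \Div(Z)$ is injective. Since $X$ and $Z$ are normal, Cartier divisors embed into Weil divisors, and the pull-back along the finite dominant morphism $g$ sends a prime Weil divisor $P$ on $X$ to $\sum_{Q \mapsto P} e(Q/P)\, Q$, with each ramification index a positive integer and at least one $Q$ lying over each $P$. Hence any Cartier divisor whose pull-back vanishes must have zero coefficient at every prime of $X$, and the injectivity follows.

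I expect the only real bookkeeping hurdle to be ensuring that the $H$-invariance of $h^* E$ and the coset decomposition are handled cleanly so that the full $G$-sum on $Z$ collapses correctly to a sum over $G/H$, and that division by $|H|$ is legitimate; conceptually, the lemma is just the combination of the two previously stated Galois-cover identities $g^* g_* = \sum_G \sigma$ and $h_* h^* = |H|$ glued together via $g = f \circ h$.
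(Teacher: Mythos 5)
Your proof is correct, and it takes a genuinely different route from the paper's. The paper observes that both $\sum_{\bar\sigma \in G/H} \sigma h^* E$ and $g^* D$ are $G$-invariant divisors on $Z$, notes that $G$-invariant divisors are determined by their push-forwards, and then computes $g_* \bigl(\sum_{\bar\sigma} \sigma h^* E\bigr) = \#G \cdot f_* E$ and $g_* g^* D = \#G \cdot D$. You instead first establish the self-contained identity $g^* f_* E = \sum_{\bar\sigma \in G/H} \sigma h^* E$ (using $g^* g_* = \sum_{\sigma \in G} \sigma$ together with $h_* h^* = \#H$ and the $H$-invariance of $h^* E$) and then settle the reverse direction by injectivity of $g^*$, argued through Weil divisors. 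The two arguments are essentially dual: the paper pushes forward and appeals to injectivity of $g_*$ on $G$-invariant divisors, whereas you pull back and appeal to injectivity of $g^*$; both rely on the same auxiliary facts about $h$. Your formulation has the merit of isolating the clean identity $g^* f_* E = \sum_{\bar\sigma} \sigma h^* E$, while the paper's is slightly more economical.

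One point to be careful about: the Galois formula $g^* g_* E = \sum_{\sigma\in G}\sigma E$, which you invoke as a black box, is stated by the paper immediately before the lemma with the remark that it ``is a special case of the following,'' and the paper's own proof of the lemma deliberately avoids it so that the formula can be read off afterwards. If that formula were only available \emph{as} a consequence of the lemma, your argument would be circular. It is not a genuine gap because the formula has a quick independent proof --- on local equations $E = \operatorname{div}(\varphi)$, push-forward is $\operatorname{div}(N_{K(Z)|K(X)}\varphi)$ and $N_{K(Z)|K(X)}\varphi = \prod_{\sigma\in G}\sigma\varphi$, so $g^* g_* E = \sum_\sigma \sigma E$ --- but you should say so (or cite it), rather than lift the formula from a sentence that the paper itself presents as a corollary of the very lemma you are proving.
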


\begin{proof}
Let us first note that $\sigma h^* E$ is well-defined since $h^* E$ is $H$-invariant. Set $F = \sum_{\bar\sigma \in G/H} \sigma h^* E$. Then both $F$ and $g^* D$ are $G$-invariant. Two $G$-invariant divisors on $Z$ coincide if their push-forwards are the same. Since $g_* g^* D = \# G \cdot D$ and $g_* F = (G:H) g_* h^* E = (G:H) f_* (h_* h^* E) = \# G \cdot f_* E$, the assertion follows.
\end{proof}

\begin{rem}\label{makeGal}
Let $f: Y \to X$ be a finite branched cover of normal varieties over a field $k$. Then by Lemma \ref{pfthrupb},  the push-forward map on divisors can be described by using pull-back maps. Indeed, in order to get to the situation of the lemma, we can take a finite Galois extension $L|K(X)$ containing $K(Y)$ and construct the normalization of $X$ in $L$. We arrive at a normal variety $Z$ with function field $K(Z)=L$ and groups $G = \Gal(L|K(X))$ and $H = \Gal(L|K(Y)) \subseteq G$ such that $G$ acts on $Z$, and $Z/G \simeq X$, $Z/H \simeq Y$. A special case arises if we take $L$ to be a Galois closure of $K(Y)|K(X)$; we then call $g:Z \to X$ the \emph{Galois closure} of $Y\!\to\! X$.
\end{rem}

\subsection{Relative effective divisors and the Picard scheme}

We want to extend the definitions of push-forward and pull-back into a more functorial setting. In order to do this, we introduce the scheme $\DIV_{X/k}$ representing relative effective divisors and the Picard scheme $\PIC_{X/k}$ and list some basic properties. For proofs, we refer to the article \cite{Kle05}.

In the following, let $X$ be a normal geometrically integral projective variety $X$ over a perfect field $k$; such a variety is automatically geometrically normal as well by \cite[Proposition 6.7.7]{Gro65}.

There exists a scheme $\DIV_{X/k}$ representing the functor
\[ \Div_{X/k}(T) := \{ \text{relative effective Cartier divisors on }X_T/T\},\]
where for an arbitrary $k$-scheme $T$, a relative effective divisor on $X_T /T$ is an effective divisor on $X_T := X \times_k T$ that is $T$-flat. Equivalently, a relative effective divisor on $X_T/T$ is a closed subscheme $D \subset X_T$ such that for every $x \in D$, $D$ is cut out at $x$ by one element that is regular on the fiber $X_t$, where $t \in T$ is the image of $x$ under the canonical projection $X_T \to T$.

We also have a group scheme $\PIC_{X/k}$ (called the Picard scheme) representing the sheaf associated to the relative Picard functor \[\Pic_{X/k}(T) := \Pic(X_T)/\Pic(T)\] in the \'{e}tale topology. There exists a proper morphism (called the \emph{Abel map}) \[\mf{A}_{X/k}: \DIV_{X/k} \to \PIC_{X/k}\] which represents the functor sending a relative effective divisor $D$ on $X_T/T$ to the sheaf $\O_{X_T}(D)$. Given a Cartier divisor $D$ on $X$, the fiber of $\mf{A}_{X/k}$ over the point $\O_X(D) \in \PIC_{X/k}$ is exactly the projective space $\mf L_D$ associated with the linear system $|D|$.

The subset of $\PIC_{X/k}$ consisting of points corresponding to numerically trivial invertible sheaves forms an open and closed subgroup scheme $\PIC^\tau_{X/k}$ of $\PIC_{X/k}$. For a given Cartier divisor $D$ on $X$, let $\DIV_{X/k}^{\cln{D}}$ denote the preimage of \[\PIC_{X/k}^{\cln{D}} := \O_X(D) + \PIC^\tau_{X/k}\] under the Abel map. Both  $\DIV_{X/k}^{\cln{D}}$ and $\PIC_{X/k}^{\cln{D}}$ are proper over $k$: the latter one is just a translate of the projective scheme $\PIC^\tau_{X/k}$, and the statement for the first one follows since the Abel map is proper.

\subsection{Scheme-theoretic Pull-back and Push-forward}

Our next aim is to extend the notion of pull-back and push-forward to relative effective divisors.
Throughout this subsection, let $f: Y \to X$ be a morphism between normal geometrically integral projective varieties over a perfect field $k$. In the case where $f$ is finite and flat, push-forward and pull-back maps between $\Div_{Y/k}$ and $\Div_{X/k}$ are constructed in \cite[21.15]{Gro67}. We slightly adapt these constructions to our needs.\\

Assume $f: Y \to X$ is a generically finite dominant morphism. The sheaf-theoretic pull-back map induces a natural transformation $f^*: \Pic_{X/k} \to \Pic_{Y/k}$, which is represented by a morphism of Picard schemes $\mf f^*: \PIC_{X/k} \to \PIC_{Y/k}$. We can also define pull-backs of relative effective divisors: For a relative effective divisor $D$ on $X_T/T$, we set its pull-back under $f_T: Y_T \to X_T$ to be $f_T^* D := D \times_{X_T} Y_T$, identifying the divisors and the corresponding subschemes of $X_T$ and $Y_T$. This is indeed a relative effective divisor on $Y_T/T$: Let $y \in Y_T$ be any point, and set $x = f_T(y)$. Then if $D$ is cut out by  $a$ in a neighborhood of $x$, $f_T^* D$ is cut out by $f_T^\#(a)$ in a neighborhood of $y$; because $a$ is regular on $X_t$, where $t$ is the image of $x$ under the canonical projection, $f_T^\#(a)$ is regular on $Y_t$, since $X_t$ and $Y_t$ are integral and $Y_t \to X_t$ is dominant. This map behaves functorially and preserves linear equivalence, its induced map on invertible sheaves is the sheaf-theoretic pull-back. Therefore, we have proved:

\begin{lemma}\label{schpb}
For a generically finite dominant morphism $f: Y \to X$, there exist pull-back morphisms $\mf{f}^*: \DIV_{X/k} \to \DIV_{Y/k}$ and $\mf{f}^*: \PIC_{X/k} \to \PIC_{Y/k}$ representing the usual pull-back of divisors and invertible sheaves. These morphisms fit into a commutative diagram

\[\begin{CD}
 \DIV_{X/k}  @>\mathbf{f}^*>>\DIV_{Y/k}\\
@V {\mf A}_{X/k}VV @VV{\mf A}_{Y/k}V\\
\PIC_{X/k} @>\mathbf{f}^*>> \PIC_{Y/k}.
\end{CD}\]
\end{lemma}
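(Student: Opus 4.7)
The plan is to verify everything on $T$-points and then invoke the Yoneda lemma; essentially all the content lies in one fiberwise local check. For the Picard half, this is formal: for each $k$-scheme $T$, sheaf-theoretic pullback gives a group homomorphism $f_T^*\colon \Pic(X_T) \to \Pic(Y_T)$ that is compatible with base change along $T'\to T$ and sends $\Pic(T)$ into $\Pic(T)$ (via $f_T^* \circ \pi_X^* = \pi_Y^*$). It therefore descends to a natural transformation of presheaves $\Pic_{X/k} \to \Pic_{Y/k}$, which sheafifies and, by representability, is induced by a unique morphism $\mf f^*\colon \PIC_{X/k} \to \PIC_{Y/k}$.

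For the divisor half, I would define on $T$-points
\[ f_T^*D := D \times_{X_T} Y_T \subseteq Y_T \]
for any relative effective divisor $D$ on $X_T/T$. The nontrivial point is that $f_T^*D$ is again a relative effective divisor on $Y_T/T$. Using the local cut-out criterion, if $D$ is cut out near $x\in D$ by an element $a \in \O_{X_T,x}$ that is regular on the fiber $X_t$ (with $t$ the image of $x$ in $T$), then $f_T^*D$ is cut out near any $y \in f_T^{-1}(x)$ by $f_T^\#(a) \in \O_{Y_T,y}$, and I must check that $f_T^\#(a)$ is regular on the fiber $Y_t$.

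This is where the standing hypotheses enter, and is the only real obstacle: since $X$ and $Y$ are geometrically integral over the perfect field $k$, the fibers $X_t = X\times_k \kappa(t)$ and $Y_t = Y\times_k \kappa(t)$ are integral, and the base change $f_t\colon Y_t \to X_t$ of the dominant morphism $f$ is again dominant (the injection $K(X)\hookrightarrow K(Y)$ remains injective after $-\otimes_k \kappa(t)$, since both sides stay domains, and then after passage to fraction fields). Consequently the local ring map $\O_{X_t,f_t(y)} \to \O_{Y_t,y}$ is an injection of domains, so the nonzero element $a \in \O_{X_t,x}$ has nonzero image $f_T^\#(a) \in \O_{Y_t,y}$, which is therefore a non-zero-divisor, i.e.\ regular on $Y_t$.

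Functoriality in $T$ is immediate from the fiber product definition, so we again obtain a natural transformation $\Div_{X/k}\to \Div_{Y/k}$ and hence, by representability, the morphism $\mf f^*\colon \DIV_{X/k}\to \DIV_{Y/k}$; linear equivalence is preserved because the induced map on line bundles is the usual sheaf-theoretic pullback $\Lc \mapsto f_T^*\Lc$. Finally, commutativity of the displayed square is checked on $T$-points via the canonical identification $\O_{Y_T}(f_T^*D) \simeq f_T^*\O_{X_T}(D)$ in $\Pic(Y_T)/\Pic(T) = \Pic_{Y/k}(T)$, which is the compatibility between divisor and invertible-sheaf pullback already used above.
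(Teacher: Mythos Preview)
Your argument is correct and follows essentially the same route as the paper: define $f_T^*D = D\times_{X_T}Y_T$, verify the relative effective divisor condition via the local cut-out criterion using that $X_t$, $Y_t$ are integral and $Y_t\to X_t$ is dominant, and deduce the morphisms on representing schemes by Yoneda. You spell out in slightly more detail why the fibers are integral and why $f_t$ remains dominant, but otherwise the proofs coincide.
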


We want to construct push-forward maps in a similar fashion. So assume that $f: Y \to X$ is a finite dominant morphism.

Let us first consider the local situation:
Let $B|A$ be an integral extension of normal domains over a field $k$ such that the corresponding extension of quotient fields $L|K$ is finite. Let $R$ be any $k$-algebra. Then $L_R := L \ox_k R$ is a finite free module over $K_R := K \ox_k R$. We consider the norm map
\[N_{L_R | K_R}:L_R \to K_R, \quad \lambda \mapsto \det (m_\lambda),\]
where $m_\lambda$ is the endomorphism of the free $K_R$-module $L_R$ given by multiplication with $\lambda$ and $\det (m_\lambda)$ is its determinant (cf.\ \cite[III.9.1, Def.\ 1]{Bou07a}).

\begin{lemma}\label{norm}
The restriction of $N_{L_R | K_R}$ to the subring $B_R := B \ox_k R$ of $L_R$ maps to $A_R := A \ox_k R \subseteq K_R$; we call it $N_{B_R | A_R}$.
\end{lemma}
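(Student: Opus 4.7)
The plan is to verify the claim after localizing at every height-$1$ prime of $A$, and then reassemble the local inclusions to a global one. I would first reduce to the case that $B$ is module-finite over $A$: without changing the norm, replace $B$ by the integral closure of $A$ in $L$, which is finite over $A$ in the paper's setting (normal varieties over a perfect field, so $A$ is essentially of finite type over $k$ and Nagata).

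For each height-$1$ prime $\p$ of $A$, $A_\p$ is a DVR, and $B_\p := B \otimes_A A_\p$ is a finitely generated torsion-free, hence free, $A_\p$-module of rank $n = [L:K]$. Fix an $A_\p$-basis $e_1, \dots, e_n$ of $B_\p$ (which is simultaneously a $K$-basis of $L$). Base-changing, $(B_\p)_R$ is free over $(A_\p)_R$ with basis $(e_i \ox 1)_i$, and for any $\lambda = \sum_\ell b_\ell \ox r_\ell \in (B_\p)_R$, writing $b_\ell e_i = \sum_j \beta_{ji}(b_\ell) e_j$ with $\beta_{ji}(b_\ell) \in A_\p$, the matrix of multiplication by $\lambda$ with respect to $(e_i \ox 1)_i$ has entries $\sum_\ell \beta_{ji}(b_\ell) \ox r_\ell \in (A_\p)_R$.

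The key identification is the natural isomorphism $L_R \cong (B_\p)_R \ox_{(A_\p)_R} K_R$ as $K_R$-modules, under which multiplication by $\lambda$ on $L_R$ is the base change of multiplication by $\lambda$ on $(B_\p)_R$ along $(A_\p)_R \hookrightarrow K_R$. Since determinants commute with base change, $N_{L_R|K_R}(\lambda)$ equals the image of the $(A_\p)_R$-valued determinant computed above, so $N_{L_R|K_R}(\lambda) \in (A_\p)_R$ for every height-$1$ prime $\p$ of $A$.

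The main technical obstacle is the final gluing step. The normality criterion $A = \bigcap_\p A_\p$ (intersection inside $K$, over all height-$1$ primes $\p$) does not obviously survive tensoring with $R$, because $-\ox_k R$ is guaranteed to preserve only finite intersections. I plan to reduce to a finite set of primes as follows: write $\xi := N_{L_R|K_R}(\lambda) = \eta / s$ with $\eta \in A_R$ and $s \in A \setminus \{0\}$; then the established inclusion $\xi \in (A_\p)_R$ translates to $\eta \in s (A_\p)_R$ for every height-$1$ prime $\p$, and only the finitely many $\p$ containing $s$ are relevant. Applying the exact functor $-\ox_k R$ to the finite injection $A/sA \hookrightarrow \prod_{\p \ni s} A_\p / sA_\p$ yields $\eta \in sA_R$, hence $\xi \in A_R$, as desired.
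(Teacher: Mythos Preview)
Your argument is correct, but the paper proceeds quite differently: instead of localising on $\Spec A$, it reduces on the $R$-side. By functoriality of the norm under a base change $S \to R$, every element of $B_R$ is the image of an element of $B_S$ for some polynomial algebra $S = k[X_1,\ldots,X_n]$, so it suffices to treat $R = k[X_1,\ldots,X_n]$. In that case $A_R = A[X_1,\ldots,X_n]$ is again integrally closed (Bourbaki), $B_R$ is integral over it, and the classical fact that the norm of an integral element over a normal domain lands in that domain finishes the proof in one line.

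The paper's route is shorter and works under the bare hypotheses of the lemma, without needing $B$ to be module-finite over $A$. Your route is more explicit and avoids invoking preservation of normality under polynomial extension; the Krull-domain gluing step---clearing a denominator $s$ so that only the finitely many height-$1$ primes through $s$ matter, and then tensoring the finite injection $A/sA \hookrightarrow \prod_{\p \ni s} A_\p/sA_\p$ with $R$---is a clean way around the failure of infinite intersections to commute with $-\otimes_k R$. One small remark: under the stated hypotheses $B$ already \emph{is} the integral closure of $A$ in $L$ (it is normal and integral over $A$), so your first reduction is really just the observation that this integral closure is module-finite in the geometric setting.
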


\begin{proof}
If $\varphi: S \to R$ is a homomorphism of $k$-algebras, then we have a commutative diagram
\[\begin{CD}
L_S @>\id\otimes\varphi>> L_R\\
@VN_{L_S | K_S}VV @VVN_{L_R | K_R}V\\
K_S @>\id\otimes\varphi>> K_R
\end{CD}\]
by \cite[III.9.1.(12)]{Bou07a}. As any element of $A_R$ lies in $(\id\otimes\varphi)(A_S)$ for a suitable polynomial algebra $S$ over $k$ and a suitable homomorphism $\varphi: S \to R$, we can assume that $R=k[X_1, \ldots, X_n]$. In this case, $A_R = A[X_1, \ldots, X_n]$ is integrally closed (\cite[V.3, Cor.\ 2 of Prop.\ 13]{Bou06a}), and since $B_R$ is integral over $A_R$, the lemma holds by \cite[V.6, Cor.\ 2 of Prop.\ 17]{Bou06a}.
\end{proof}

Coming back to our situation, we define a map
\[N_{{f_T}_* \O_{Y_T}|\O_{X_T}}: f_* \O_{Y_T} \to \O_{X_T}\]
by virtue of the preceding lemma (gluing the local data). It is clear from the definition that this morphism is multiplicative, hence gives a morphism of sheaves of abelian groups $N_{{f_T}_* \O_{Y_T}|\O_{X_T}}: f_* \O_{Y_T}^\times \to \O_{X_T}^\times$. This morphism induces a norm homomorphism
\[\Pic(Y_T) \simeq H^1(Y_T, \O_{Y_T}^\times) \to H^1(X_T, \O_{X_T}^\times) \simeq \Pic(X_T)\]
and, by functoriality, a morphism $\mf f_*: \PIC_{Y/k} \to \PIC_{X/k}$.

We can also define the push-forward of an relative effective divisor $D$ on $Y_T/T$. $(f_T)_* D$ is best defined locally in the following way: For each open subset $V$ of $Y_T$ of the form $V=(f_T)^{-1}(U)$ with $U=\Spec A$ affine, $D$ is cut out on $V$ by an element $b \in B = \O_V(V)$. We define $(f_T)_* D$ on $U$ to be cut out by $N_{B|A}(b) \in A$. Using Lemma \ref{norm}, we can see that these local descriptions fit together to give a closed subscheme of $X_T$; again, $a$ is regular on every $X_t$ with $t$ in the image of $U$ under the canonical projection $X_T \to T$ since $Y_t \to X_t$ is a dominant morphism of integral schemes. Therefore, $(f_T)_* D$ is indeed a relative effective divisor on $X_T$. This push-forward map on relative effective divisors behaves functorially and preserves linear equivalence; the induced map on invertible sheaves is just the map defined above. We have thus proved the following

\begin{lemma}\label{schpf}
 A finite dominant morphism $f:Y \to X$ induces push-forward morphisms $\mathbf{f}_*: \DIV_{Y/k} \to \DIV_{X/k}$ and $\mathbf{f}_*: \PIC_{Y/k} \to \PIC_{X/k}$ representing the push-forward of divisors and invertible sheaves. The following diagram commutes:
\[\begin{CD}
 \DIV_{Y/k}  @>\mathbf{f}_*>>\DIV_{X/k}\\
@V {\mathbf A}_{Y/k}VV @VV{\mathbf A}_{X/k}V\\
\PIC_{Y/k} @>\mathbf{f}_*>> \PIC_{X/k}.
\end{CD}\]
\end{lemma}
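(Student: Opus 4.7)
The statement packages the constructions carried out in the paragraphs preceding it; my plan is therefore to check that those constructions (a) give a well-defined natural transformation of the relevant functors on $k$-schemes, and (b) make the Abel diagram commute.

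First I would verify that the local recipe $D \mapsto (f_T)_* D$ actually defines a closed subscheme of $X_T$. Replacing a local generator $b \in B_R := B \otimes_k R$ of $D|_{f_T^{-1}(U)}$ by $bu$ with $u \in B_R^\times$ changes $N_{B_R|A_R}(b)$ to $N_{B_R|A_R}(b) \cdot N_{B_R|A_R}(u)$, and the second factor is a unit in $A_R$ by Lemma~\ref{norm} applied to both $u$ and $u^{-1}$. Hence the principal ideal depends only on the divisor, and the local data glue. Functoriality in $T$ follows from the base-change compatibility of the norm already established in Lemma~\ref{norm}: a morphism $T' \to T$ induces a commutative square of structure sheaves that intertwines with the norm, so $(f_{T'})_*(D \times_T T')$ agrees with the pullback of $(f_T)_* D$. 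Yoneda then produces the desired morphism $\mathbf{f}_*: \DIV_{Y/k} \to \DIV_{X/k}$. The same strategy, replacing ring-level norms by the sheaf-level norm $(f_T)_* \O_{Y_T}^\times \to \O_{X_T}^\times$ and $\Pic$ by $H^1(-,\O^\times)$ (\'{e}tale-sheafified), yields $\mathbf{f}_*: \PIC_{Y/k} \to \PIC_{X/k}$.

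Next I would check that $(f_T)_* D$ is indeed relatively effective, i.e.\ $T$-flat. This is a fibrewise condition: the sheaf must be locally cut out by a single element that is a non-zero-divisor in each fibre. Since the formation of norms commutes with base change (by the very proof of Lemma~\ref{norm}), the fibre over $t \in T$ of $(f_T)_* D$ is locally generated by the norm of the local generator of $D_t$; the hypothesis that $D$ is relatively effective gives that this generator is a non-zero-divisor on $Y_t$, and the norm map sends non-zero-divisors to non-zero-divisors because the determinant of an injective endomorphism of a finite free module over a commutative ring remains a non-zero-divisor after passage to the total ring of fractions.

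For the commutativity of the Abel diagram, it suffices to test on $T$-valued points, and there it reduces to the statement that the invertible sheaf of $(f_T)_* D$ equals the sheaf-norm of $\O_{Y_T}(D)$. Both are described locally by exactly the same recipe: a \v{C}ech cocycle $(b_i/b_j)$ for $\O_{Y_T}(D)$ is sent to $(N(b_i)/N(b_j))$, which is also the cocycle for $\O_{X_T}((f_T)_* D)$. The main obstacle, in my view, is the fibrewise check above: handling the case where $Y_t$ fails to be a domain forces one to pass to the decomposition of $B_R \otimes k(t)$ into its local factors, observe that the total norm factors through the partial norms on each component, and check that a tuple of non-zero-divisors yields a non-zero-divisor overall. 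Everything else is formal, given the ring-theoretic Lemma~\ref{norm}.
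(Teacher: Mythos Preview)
Your proposal is correct and follows the same construction the paper carries out in the paragraphs immediately preceding the lemma: local norm, gluing, functoriality in $T$, and comparison of \v{C}ech cocycles for the Abel square.

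One simplification you overlook: in the standing hypotheses of this subsection, $X$ and $Y$ are \emph{geometrically integral} over the perfect field $k$. Hence for every $t\in T$ the fibres $X_t=X\times_k k(t)$ and $Y_t=Y\times_k k(t)$ are integral, and $K\otimes_k k(t)$, $L\otimes_k k(t)$ are fields. So ``regular on the fibre'' just means ``nonzero'', and the norm of a nonzero element of the field $L\otimes_k k(t)$ is a nonzero element of $K\otimes_k k(t)$. The paper handles this in a single clause (``since $Y_t\to X_t$ is a dominant morphism of integral schemes''). Your ``main obstacle'' --- the case where $Y_t$ fails to be a domain and one must decompose $B_R\otimes k(t)$ into local factors --- therefore never arises in this setting; the extra work you outline is not wrong, but it is unnecessary here.
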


\section{The Main Theorem}\label{secmt}

We turn our attention to the main theorem. Let us shortly review the context:
Let $g: Z \to X$ be a finite branched Galois cover with Galois group $G$ of normal, geometrically integral projective varieties over a perfect field $k$, and assume that $\dim X = d\geq 2$ and $Z$ has a $G$-equivariant resolution of singularities. We want to classify those prime divisors on $X$ with  decomposition class $\Cc$, where $\Cc$ is a given conjugacy class of subgroups of $G$.

We remind the reader of the two functors defined in the introduction: For any Cartier divisor $D$ on $X$ and any field extension $K|k$, the set $P_D(K)$ consists of all geometrically integral effective Cartier divisors on $X_K := X \times_k K$ that are linearly equivalent to $D_K$, and $D^\Cc_D(K)$ is the set of those divisors in $P_D(K)$ that additionally are unramified in the cover $Z \to X$ and have geometric decomposition class $\Cc$.

In this situation, the following holds:

\begin{theo}\label{dens1}
For any Cartier divisor $D$ on $X$, there exist a quasiprojective variety $\mf P_D$ and a reduced subscheme $\mf D^\Cc_D$ of $\mf P_D$ representing the functors $P_D$ and $D^\Cc_D$, respectively.

Let $D_0$ be an ample Cartier divisor that is (linearly equivalent to) the push-forward of an effective Cartier divisor on $Z$. Then
\[ \lim_{m \to \infty} \frac{\dim \mf{D}^\Cc_{mD_0}}{\dim \mf{P}_{mD_0}} = \frac{1}{(G:\Cc)^{d-1}},\]
where $(G : \Cc)$ is defined to be $(G :H)$ for any representative $H$ of $\Cc$. If we just assume $D_0$ to be ample, the statement still holds if we replace $D_0$ by $\# G \cdot D_0$ or regard the limit superior instead of the limit.
\end{theo}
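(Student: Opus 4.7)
The plan is to first construct $\mf P_D$ and $\mf D^\Cc_D$ using the Picard-scheme machinery of Section \ref{secpfpb}, then compute their asymptotic dimensions via the volumes of divisors on an intermediate quotient of $Z$.

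For representability, the fiber $\mathfrak{L}_D$ of the Abel map $\mf A_{X/k}$ over $[\O_X(D)]$ is the projective space attached to $|D|$. Geometric integrality of fibers is an open condition in a proper flat family, so the locus $\mf P_D \subseteq \mathfrak{L}_D$ of geometrically integral divisors is an open quasiprojective subscheme representing $P_D$. For $\mf D^\Cc_D$, I fix a representative $H$ of $\Cc$ and set $Y = Z/H$ with canonical morphisms $h: Z \to Y$ and $f: Y \to X$. A geometrically integral $D' \in P_D(K)$ is unramified with decomposition class $\Cc$ exactly when, after base change to $\bar k$, $\mf g^*(D')$ is reduced and decomposes as $\sum_{\bar\sigma \in G/H} \sigma E$ for an irreducible $E$ with $G$-stabilizer conjugate to $H$; equivalently, $D' = f_* E_Y$ for a geometrically integral $E_Y$ on $Y_{\bar k}$ with $h^* E_Y$ irreducible. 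Using the morphisms $\mf g^*$ and $\mf f_*$ from Lemmata \ref{schpb} and \ref{schpf}, these conditions cut out a locally closed subscheme of $\mf P_D$; one then takes the reduced structure.

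For the asymptotic ratio, Serre vanishing and ampleness give
\[ \dim \mf P_{mD_0} = h^0(X, mD_0) - 1 \sim \frac{\vol(D_0)}{d!}\, m^d. \]
For the numerator, the hypothesis $D_0 = g_* E_0$ with $E_0$ effective on $Z$ yields an effective divisor $F_Y := h_* E_0$ on $Y$ with $f_* F_Y = D_0$ by functoriality of push-forward, so $\mf f_*$ sends $|mF_Y|$ into $|mD_0|$, and generic members of such systems are geometrically integral with decomposition class $\Cc$ by Bertini-type arguments. More generally, linear systems $|L_Y|$ for classes $[L_Y] \in \Pic(Y)$ with $f_*[L_Y] = [mD_0]$ contribute; since $\mf f_*$ restricted to these loci is generically finite (the fiber above a prime with decomposition class $[H]$ has size at most $|H \backslash G / H|$), one obtains
\[ \dim \mf D^\Cc_{mD_0} = \max_{L_Y} \bigl(h^0(Y, L_Y) - 1\bigr) + O(m^{d-1}), \]
where the maximum is over classes $[L_Y]$ with $f_*[L_Y] = [mD_0]$.

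The theorem then reduces to the volume identity $\max \vol(L_Y) = \vol(D_0) m^d / (G:\Cc)^{d-1}$, which I would establish using the volume theory of Section \ref{secvol}. The Khovanskii--Teissier inequality supplies the upper bound: from the projection formula, $L_Y \cdot (f^* D_0)^{d-1} = m D_0^d$ whenever $f_* L_Y = mD_0$, and $(f^* D_0)^d = (G:\Cc) D_0^d$, whence $L_Y^d \le (m D_0^d)^d / ((G:\Cc) D_0^d)^{d-1}$, i.e., $\vol(L_Y) \le \vol(D_0) m^d / (G:\Cc)^{d-1}$. For the lower bound, the balanced rational divisor $\frac{m}{(G:\Cc)} f^* D_0$ attains this upper bound and is an integer effective divisor when $(G:\Cc)\mid m$; the hypothesis $D_0 = g_* E_0$ provides, via $F_Y$ and its perturbations in $\ker(f_*)$, integer effective representatives close to this balanced class for every $m$. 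The main obstacle will be precisely this last step: showing the upper bound is attained asymptotically in $m$ and not merely along a subsequence, which is exactly what the push-forward hypothesis on $D_0$ is designed to guarantee. For a general ample $D_0$, the hypothesis holds automatically for $\#G \cdot D_0 = g_*(g^* D_0)$, yielding the limit along multiples of $\#G$ and the limsup statement for general $m$.
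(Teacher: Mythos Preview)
Your construction of $\mf P_D$ and the denominator asymptotics are correct and match the paper. The essential gap is in your description of $\mf D^\Cc_D$ and the upper bound for its dimension.

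You assert that $D'$ has geometric decomposition class $\Cc$ if and only if $D' = f_* E_Y$ for a geometrically integral \emph{Cartier} divisor $E_Y$ on $Y_{\bar k}$. This fails in general: even when $Z$ and $X$ are regular, $Y = Z/H$ need not be, and the Weil prime divisor on $Y$ lying over a split $D'$ need not be locally principal. The paper gives an explicit example of this phenomenon (the $\mathbb A^2$ quotient by the Klein four group in Section~\ref{secHDT}). Likewise on $Z$ itself, if $Z$ is singular the irreducible components $E$ of $g^*D'$ need not be Cartier, so one cannot simply cut out $\mf D^\Cc_D$ using $\mf g^*$ and a decomposition condition inside $\DIV_{Z/k}$. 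This is precisely why the paper passes to a $G$-equivariant resolution $\pi: Z' \to Z$ and characterizes splitting via $H$-stable Cartier divisors on $Z'$ modulo exceptional corrections (Proposition~\ref{splitnonreg}); the construction of $\mf S^Y_D$ and then $\mf D^\Cc_D$ in Proposition~\ref{existS} and Corollary~\ref{existD} takes place entirely on $Z'$.

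Because of this, your key formula $\dim \mf D^\Cc_{mD_0} = \max_{L_Y}\bigl(h^0(Y,L_Y)-1\bigr) + O(m^{d-1})$ is not established: you have no control over the contributions that do not arise as push-forwards of Cartier divisors from $Y$. Your Khovanskii--Teissier bound $L_Y^d \le m^d D_0^d / (G:\Cc)^{d-1}$ is also only valid for nef $L_Y$, whereas classes in the fiber $f_*^{-1}[mD_0]$ need not be nef and $\vol(L_Y) \neq L_Y^d$ in general. The paper obtains the upper bound instead on $Z'$, bounding $\dim(\DIV^\beta_{Z'/k})^H$ via the $H$-equivariant volume estimate of Proposition~\ref{Hvol}; the inequality $\vol_Y(E) \le \deg(f)^{1-d}\vol_X(f_*E)$ (Proposition~\ref{volpushforward}) is the correct replacement for Khovanskii--Teissier, proved via log-concavity of volume on the Galois closure. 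Your lower-bound idea is close to the paper's Lemma~\ref{lowerbound}, but you still need the argument there ruling out that the generic push-forward has strictly smaller decomposition class, which the paper does by showing the very ample system $|E_m|$ cannot land entirely in $|h^*E_m|^{H'}$ for any $H' \supsetneq H$.
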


In this section, we show the existence of $\mf P_D$ and $\mf D^\Cc_D$. The asymptotic behaviour will be described in Section \ref{secasy}.

\subsection{Existence of $\mf P_{D}$}\label{secexP}
Let us assume that $X$ is a geometrically integral projective variety over $k$. We will construct $\mf P_D$ by intersecting the scheme $\mf L_D$ representing (all divisors in) the linear system $|D|$ with the following subscheme of $\DIV_{X/k}$:

\begin{propdef}
There exists a unique open subscheme $\GIDIV_{X/k}$ of $\DIV_{X/k}$ representing the functor $\GIDiv_{X/k}$ given by
\[\GIDiv_{X/k}(T) = \{D \in \Div_{X/k}(T) \ | \ D_t \text{ is geometrically integral} \ \forall \ t \in T\}.\]
\end{propdef}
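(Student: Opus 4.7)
The plan is to apply the standard openness result for the property ``geometrically integral fibers'' in a proper flat family. Let $\pi: \mc D \to \DIV_{X/k}$ denote the universal relative effective Cartier divisor (i.e.\ the one classified by the identity morphism of $\DIV_{X/k}$), viewed as a closed subscheme of $X \times_k \DIV_{X/k}$. Since $X$ is projective over $k$, the projection $X \times_k \DIV_{X/k} \to \DIV_{X/k}$ is proper, and therefore so is $\pi$. By the very definition of a relative effective Cartier divisor, $\pi$ is also flat; and it is locally of finite presentation, since $X$ is of finite type over $k$ and $\DIV_{X/k}$ is locally of finite type.

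With these hypotheses verified, the openness of the locus of geometrically integral fibers (\cite[12.2.4]{Gro67}) gives that
\[ U := \bigl\{ t \in \DIV_{X/k} \;\bigl|\; \mc D_t \text{ is geometrically integral} \bigr\} \]
is an open subset of $\DIV_{X/k}$. I define $\GIDIV_{X/k}$ to be $U$ equipped with its canonical open subscheme structure. To see that this represents $\GIDiv_{X/k}$, let $T$ be any $k$-scheme and $\varphi: T \to \DIV_{X/k}$ the morphism classifying a relative effective divisor $D$ on $X_T/T$. For $t \in T$ with image $s = \varphi(t)$, the fiber $D_t$ is the base change of $\mc D_s$ along $k(s) \hookrightarrow k(t)$; since geometric integrality of a scheme of finite type over a field is both preserved and reflected under arbitrary field extensions, $D_t$ is geometrically integral if and only if $s \in U$. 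Hence $D \in \GIDiv_{X/k}(T)$ if and only if the set-theoretic image of $\varphi$ lies in $U$, which, $U$ being open, is in turn equivalent to $\varphi$ factoring through the open immersion $\GIDIV_{X/k} \hookrightarrow \DIV_{X/k}$.

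Uniqueness is automatic: an open subscheme of $\DIV_{X/k}$ is determined by its underlying open subset, and that open subset is recovered from the subfunctor $\GIDiv_{X/k}$ by evaluating on spectra of residue fields. The plan has no serious obstacle beyond confirming the hypotheses of the cited openness theorem, which are immediate from the definition of $\DIV_{X/k}$ and the projectivity of $X$.
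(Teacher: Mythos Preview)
Your proof is correct and follows essentially the same route as the paper: both apply the openness result \cite[12.2.4(vii)]{Gro66} for geometrically integral fibers in a proper flat family, define $\GIDIV_{X/k}$ as the resulting open subscheme, and then verify representability by checking that $D_t$ is geometrically integral if and only if $\varphi(t)$ lies in $U$. Your version is slightly more streamlined in that you apply the theorem once to the universal divisor, whereas the paper formulates the claim for arbitrary $T$ and then specializes to $T=\DIV_{X/k}$; the content is the same. One minor correction: the reference 12.2.4 is in EGA~IV$_3$ (i.e.\ \cite{Gro66}), not \cite{Gro67}.
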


\begin{proof}
To a point $z \in \DIV_{X/k}$, we can associate an effective Cartier divisor $D(z)$ on $X_{\kappa(z)}$ which corresponds to the natural morphism $\Spec \kappa(z) \to \DIV_{X/k}$ with image $\{z\}$. We consider the set
\[ U = \{ z \in \DIV_{X/k} \ |\ D(z) \text{ is geometrically integral} \}. \]

Let $D$ be a relative effective divisor on $X_T/T$, $\varphi: T \to \DIV_{X/k}$ be the corresponding morphism.

\noindent\emph{Claim:} $\varphi^{-1}(U) = \{ t \in T \ |\ D_t$ is geometrically integral $\}$.

In fact, $D_t$ corresponds to the morphism $\Spec \kappa(t) \to \DIV_{X/k}$ given by the composition of the natural morphism $\Spec \kappa(t) \to T$ and $\varphi$, so by the definition of $U$, $D_t$ is geometrically integral if and only if $\varphi(t) \in U$.

$D$ is proper and flat over $T$, so by \cite[12.2.4 (vii)]{Gro66} the set of all $t \in T$ for which $D_t$ is geometrically integral is open in $T$, i.e.\ $\varphi^{-1}(U)$ is an open subset of $T$. For $T = \DIV_{X/k}$, $\varphi = \id_{\DIV_{X/k}}$, this shows that $U$ is open.

Let $\GIDIV_{X/k}$ be the open subscheme of $\DIV_{X/k}$ with underlying set $U$. The claim implies that $D \in \GIDiv_{X/k}(T)$ if and only if $\varphi: T \to \DIV_{X/k}$ factors through $\GIDIV_{X/k}$, which proves the assertion.
\end{proof}

\begin{rem}
If $K$ is an algebraically closed field extension of $k$, then $\GIDiv_{X/k}(K)$ is just the set of Cartier divisors on $X_K$ that correspond to a Weil prime divisor, or in other words, the set of locally principal Weil prime divisors on $X_K$.
\end{rem}

\begin{dfn}
For a given Cartier divisor $D$ on $X$, we define $\mf{P}_{D}$ to be the following scheme-theoretic intersection (inside $\DIV_{X/k}$):
\[\mf{P}_{D} = \mf{L}_{D} \cap \GIDIV_{X/k} := \mf{L}_{D} \times_{\,\DIV_{X/k}}  \GIDIV_{X/k}.\]
$\mf{P}_D$ represents the geometrically integral divisors that are linearly equivalent to $D$.
\end{dfn}

\begin{rem}\label{unr}
In the situation of our main theorem, we are also interested in prime divisors which are unramified in a cover $Z \to X$. Since only finitely many prime divisors can be ramified, we can easily construct an open subvariety $\mf P'_D$ of $\mf P_D$ representing the unramified geometrically integral divisors in $|D|$ by removing the finitely many closed points corresponding to ramified divisors. For all but finitely many linear systems $|D|$, we have $\mf P'_D = \mf P_D$.
\end{rem}

\subsection{Behaviour of divisors in finite and generically finite covers}\label{secHDT}

Before constructing $\mf{D}_D^\Cc$, we have to find ways to classify the decomposition behaviour of divisors in covers of varieties.

\begin{cont}\label{contGal}
In this subsection, we will consider the following situation: Let $Z$ be a normal integral scheme with a finite group $G$ acting on it such that the quotient $X = Z/G$ exists. Let $H \subseteq G$ be a subgroup, and let $Y = Z/H$ be the corresponding quotient. $X$ and $Y$ are both normal integral schemes; let $f, g, h$ denote the canonical morphisms $Y \to X, Z \to X, Z \to Y$.
\end{cont}

We want to find a criterion for a prime divisor on $X$ to split in $Y$.
If we use the notion of Weil divisors, then there is an easy criterion:

\begin{lemma}\label{splitWeil}
A Weil prime divisor on $X$ splits in $Y$ if and only if it is unramified in $Y$ and the push-forward of a Weil prime divisor on $Y$.
\end{lemma}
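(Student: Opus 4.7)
The plan is to simply unwind the definitions of ``splits'' and of push-forward of a Weil prime divisor. Recall from Section \ref{secpfpb} that for a finite dominant morphism $f: Y \to X$ of normal varieties and a Weil prime divisor $Q$ on $Y$, one has $f_*(Q) = [k(Q):k(f(Q))] \cdot f(Q)$, where $f(Q)$ denotes the closure of the image of the generic point $\eta_Q$; since $f$ is finite (hence preserves dimensions of closed subvarieties), $f(Q)$ is again a Weil prime divisor on $X$. Moreover, a prime divisor $P$ ``splits'' in $Y$ precisely when it is unramified in $Y$ and there exists a point $y \in Y$ with $f(y) = \eta_P$ and $k(y) = k(\eta_P)$.

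For the forward direction, assume a Weil prime divisor $P \subset X$ splits in $Y$. Then $P$ is unramified in $Y$ by definition, and there is a point $y \in Y$ with $f(y) = \eta_P$ and $k(y) = k(\eta_P)$. Since $f$ is finite, $\dim \overline{\{y\}} = \dim \overline{\{\eta_P\}} = d-1$, so $y = \eta_Q$ for some Weil prime divisor $Q \subset Y$. We then have $f(Q) = P$ and $[k(Q):k(P)] = 1$, so the push-forward formula yields $f_*(Q) = P$.

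For the backward direction, assume $P$ is unramified in $Y$ and $P = f_*(Q)$ for some Weil prime divisor $Q$ on $Y$. Expanding, $P = [k(Q):k(f(Q))] \cdot f(Q)$ in the free abelian group of Weil divisors on $X$. Since $f(Q)$ is a Weil prime divisor on $X$ and $P$ has coefficient one in its own irreducible decomposition, uniqueness of that decomposition forces $f(Q) = P$ and $[k(Q):k(P)] = 1$. The point $\eta_Q \in Y$ then satisfies $f(\eta_Q) = \eta_P$ and $k(\eta_Q) = k(\eta_P)$, so $P$ splits in $Y$.

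The argument is routine once the definitions are lined up; the only minor verification is that $f(Q)$ is still a prime divisor on $X$, which is immediate from finiteness of $f$. There is no substantial obstacle.
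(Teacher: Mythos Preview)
Your proof is correct and follows exactly the approach the paper indicates: the paper merely states that the lemma ``follows directly from the definition of push-forward for Weil divisors,'' and your argument is a careful unwinding of precisely that definition together with the definition of ``splits.'' There is nothing to add.
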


This follows directly from the definition of push-forward for Weil divisors (see \cite[Definition 7.2.17]{Liu02}). Unfortunately, we will need to give criteria for splitting using only Cartier divisors. And even if we can assume $Z$ and $X$ to be regular, so that Weil and Cartier divisors coincide for them, the same need not to be true for $Y$:

\begin{ex}
Let $Z = \Spec C$ with $C = k[X,Y], \ch k \neq 2$. Let the Klein four group $G = \langle \sigma, \tau \rangle$ act on $Z$ by \[\sigma: X \mapsto -X, Y \mapsto -Y \quad \text{and} \quad \tau: X \mapsto Y, Y \mapsto X,\] and set $H = \langle \sigma \rangle$. Then $Z/H = Y = \Spec B$ with \[B = k[X,Y]^H = k[X^2, XY, Y^2] = k[R,S,T]/(RT-S^2),\] and furthermore $Z/G = X = \Spec A$ with \[A = k[X,Y]^G = k[X^2+Y^2, XY] = k[U,V].\] Both $X$ and $Z$ are regular, but $Y$ is not (\cite[Example II.6.5.2]{Har77}). Even more, if we consider the Cartier $D$ divisor on $X$ given by $V=0$, then $D$ splits in $Y$, but no Weil prime divisor on $Y$ lying over $D$ is locally principal: They are given by $R=S=0$ and $T=S=0$, respectively.
\end{ex}

Therefore, if we want to describe splitting in terms of Cartier divisors, we have to take a slightly more complicated approach. If $Z$ is regular, then we still get a rather easy description of split divisors:

\begin{prop}\label{splitreg}
In the Context \ref{contGal}, assume $Z$ is regular. Let $D$ be a Cartier prime divisor on $X$ that is unramified in $Z$. The following are equivalent:
\begin{enumerate}[(a)]
\item $D$ splits in $Y$.
\item There exists a Cartier prime divisor $F_{1}$ on $Z$ over $D$ such that $G_{F_{1}} \subseteq H$.
\item There is an effective $H$-stable Cartier divisor $F$ on $Z$ such that $g_* F = (\#H) D$.
\item There exists an effective $H$-stable Cartier divisor $F$ on $Z$ such that \[\sum_{\bar\sigma\in G/H} \sigma F = g^{*} D.\]
\end{enumerate}
\end{prop}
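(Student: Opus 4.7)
The plan is to establish (a) $\Leftrightarrow$ (b) and then close the cycle (b) $\Rightarrow$ (c) $\Leftrightarrow$ (d) $\Rightarrow$ (b). Since $Z$ is regular, every Weil prime divisor on $Z$ is Cartier, and every effective $H$-stable Cartier divisor on $Z$ admits a canonical description as a non-negative integer combination of $H$-orbits of Weil prime divisors. The equivalence (a) $\Leftrightarrow$ (b) is then pure Hilbert decomposition theory. If $F_1, \ldots, F_r$ are the prime divisors on $Z$ over $D$ (a single $G$-orbit with pairwise conjugate stabilizers), then the prime divisors on $Y$ over $D$ correspond to the $H$-orbits on $\{F_1, \ldots, F_r\}$, with the residue field $k(W_i)$ of the divisor $W_i := h(F_i)$ equal to $k(F_i)^{H \cap G_{F_i}}$. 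Since $D$ is unramified in $Z$ all inertia groups $I_{F_i}$ are trivial, hence $[k(F_i):k(D)] = \#G_{F_i}$ and $D$ is automatically unramified in $Y$; combining with the above gives $[k(W_i):k(D)] = [G_{F_i} : H \cap G_{F_i}]$, which equals $1$ precisely when $G_{F_i} \subseteq H$. So $D$ splits in $Y$ if and only if some $G_{F_i} \subseteq H$.

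For (b) $\Rightarrow$ (c), I would set $F := \sum_{\bar\tau \in H/G_{F_1}} \tau F_1$; since $G_{F_1} \subseteq H$ this is a well-defined effective $H$-stable Cartier divisor on $Z$, and using $g \circ \tau = g$ for $\tau \in G$ together with $[k(F_1):k(D)] = \#G_{F_1}$ one computes $g_* F = (\#H/\#G_{F_1}) \cdot \#G_{F_1} \cdot D = \#H \cdot D$. For (c) $\Leftrightarrow$ (d), I would use the standard identities $g^* g_* F = \sum_{\sigma \in G} \sigma F$ (cf.\ the discussion preceding Lemma \ref{pfthrupb}) and $g_* g^* D = \#G \cdot D$. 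From (c), applying $g^*$ and rewriting $\sum_{\sigma \in G} \sigma F = \#H \cdot \sum_{\bar\sigma \in G/H} \sigma F$ via $H$-stability yields (d); from (d), applying $g_*$ and using $g_*(\sigma F) = g_* F$ for $\sigma \in G$ gives $(G:H) \cdot g_* F = \#G \cdot D$, i.e.\ (c).

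The substantive step is (c) $\Rightarrow$ (b). Write $F = \sum_i n_i F^{(i)}$ over the prime Weil (= Cartier) divisors of $Z$, with $n_i \geq 0$. Since $g_* F$ is effective with support $\{D\}$, every $F^{(i)}$ with $n_i > 0$ lies over $D$ and hence belongs to $\{F_1, \ldots, F_r\}$. Group them into $H$-orbits $O_1, \ldots, O_s$; by $H$-stability the multiplicities $n_i$ are constant on each $O_j$, say equal to $c_j$. Set $e := \#G_{F_1}$ (which equals $\#G_{F_i}$ for all $i$ by conjugacy, and equals $[k(F_i):k(D)]$ by trivial inertia); then $g_* F = \#H \cdot D$ translates into
\[\sum_{j=1}^s c_j \cdot \#O_j \cdot e \;=\; \#H,\]
and orbit--stabilizer gives $\#O_j = \#H/\#(H \cap G_{F_i^{(j)}})$ for any representative $F_i^{(j)} \in O_j$. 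A \emph{good} orbit (with $G_{F_i^{(j)}} \subseteq H$) satisfies $\#O_j = \#H/e$, whereas for a \emph{bad} orbit $\#(H \cap G_{F_i^{(j)}})$ is a proper divisor of $e$ and hence at most $e/2$, giving $\#O_j \geq 2\#H/e$. If no good orbit had $c_j > 0$, the bad orbits alone would force $\#H \geq 2\#H \cdot \sum_{j \text{ bad}} c_j$ and hence all $c_j = 0$, contradicting $F \neq 0$. So some good orbit has $c_j > 0$, producing an $F_i$ with $G_{F_i} \subseteq H$. The main obstacle is exactly this counting argument; the subtler conceptual point throughout is that $Y$ need not be regular, so splitting phenomena on $Y$ must be encoded via Weil divisors on $Z$ (which are automatically Cartier by regularity) rather than Cartier divisors on $Y$.
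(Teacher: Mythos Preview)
Your proof is correct and follows essentially the same logical structure as the paper's (same cycle of implications, same use of Hilbert decomposition theory for (a) $\Leftrightarrow$ (b), same construction $F=\sum_{\bar\tau\in H/G_{F_1}}\tau F_1$ for (b) $\Rightarrow$ (c)). The only notable difference is in (c) $\Rightarrow$ (b): the paper observes more directly that for \emph{any} prime component $F_1\leq F$ the $H$-orbit sum $\sum_{\bar\tau\in H/H_{F_1}}\tau F_1$ is $\leq F$, whence pushing forward gives $(\#H)\,D\geq (H:H_{F_1})(\#G_{F_1})\,D$ and hence $\#H_{F_1}\geq\#G_{F_1}$, i.e.\ $G_{F_1}\subseteq H$ --- so every prime component works, and your good/bad orbit counting is unnecessary.
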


\begin{proof}
$(a) \Lra (b)$: Let $x$ be the point on $X$ corresponding to the prime divisor $D$. Take any Cartier prime divisor $F_1$ on $Z$ over $D$, let $z$ denote the corresponding point on $Z$ and $y$ the image of $z$ in $Y$. Then
\[\begin{split}
G_{F_1} \subseteq  H &\Lra G_{F_1} = H_{F_1} \\& \Lra [\kappa(z):\kappa(x)] = \# G_{F_1} = \# H_{F_1} = [\kappa(z):\kappa(y)]\\& \Lra [\kappa(y):\kappa(x)]=1,
\end{split}\]
which proves the assertion.

$(b) \Lra (c)$
Let $F_1$ be a divisor as described in $b)$, and set $F= \sum_{\bar \tau \in H / G_{F_1}} \tau F_1$. Then $F$ is $H$-stable by construction. Furthermore,
\[g_* F  = g_*\left(\sum_{\bar\tau\in H/G_{F_1}} \tau F_1\right) = (H:G_{F_1}) g_* (F_1) = (\# H) D.\]

For the inverse direction, assume we are given a divisor $F$ as in c). Let $F_1 \leq F$ be a Cartier prime divisor. Since $F$ is $H$-stable, we must have $\sum_{\bar\tau \in H/H_{F_1}} \tau F_1 \leq F$, hence
\[(\# H) D = g_* F \geq g_* \left(\sum_{\bar\tau \in H/H_{F_1}} \tau F_1\right) = (H:H_{F_1}) (\# G_{F_1}) D.\]
Thus, $\# G_{F_1} \leq \# H_{F_1} = \# (G_{F_1} \cap H)$, which implies $G_{F_1} \subseteq H$.

$(c) \Lra (d)$
Let $F$ be an effective $H$-stable Cartier divisor on $Z$. Then $\sum_{\bar\sigma\in G/H} \sigma F$ is $G$-stable by construction, and
\[g_* F = (\# H) D \Lra g_* \left(\sum_{\bar\sigma\in G/H} \sigma F\right) = (\# G) D = g_* (g^* D) \Lra \sum_{\bar\sigma\in G/H} \sigma F = g^* D.\]
The last implication follows by \cite[Example 1.7.6]{Ful98}.
\end{proof}

If $Z$ is not regular any more, but has a $G$-equivariant resolution of singularities, we still find a way to describe splitting divisors:

\begin{prop}\label{splitnonreg}
In the Context \ref{contGal}, assume that $Z$ allows a $G$-equivariant resolution of singularities $\pi: Z' \to Z$. Denote the composition $g \circ  \pi: Z' \to X$ by $g'$, and consider a Cartier prime divisor $D$ on $X$ that is unramified in $Z$. Then the following are equivalent:
\begin{enumerate}[(a)]
\item $D$ splits in $Y$.
\item There exists an effective $H$-stable Cartier divisor $F'$ on $Z'$ such that ${g'}^{*} D -\sum_{\bar\sigma\in G/H} \sigma F'$ is effective and supported in the exceptional locus of $\pi$.
\item Let $E'_1, \ldots E'_r$ be the Cartier prime divisors of $Z'$ with support in the exceptional locus of $\pi$. Then there exists an effective $H$-stable Cartier divisor $F''$ on $Z'$ and integers $d_1, \ldots, d_r$ with $0 \leq d_i < (G:H)$ for all $i$ such that
    \[ \sum_{\bar\sigma \in G/H} \sigma F'' = {g'}^* D + \sum_{i=1}^r d_i E'_i.\]
\end{enumerate}
\end{prop}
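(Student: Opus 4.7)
The plan is to prove $a)\Leftrightarrow b)$ and $b)\Leftrightarrow c)$, adapting the Cartier-based arguments of Proposition~\ref{splitreg} to the possibly singular $Z$ by passing to strict transforms on $Z'$. Three ingredients are used throughout: Hilbert decomposition theory, which (since $D$ is unramified in $Z$) identifies ``$D$ splits in $Y$'' with the existence of a Weil prime $F_1$ on $Z$ over $D$ such that $G_{F_1}\subseteq H$; the $G$-equivariance of $\pi$, which implies $\sigma F_1' = (\sigma F_1)^{\sim}$ for the strict transform $F_1'$ on $Z'$ and hence $G_{F_1'}=G_{F_1}$, with $F_1'$ Cartier because $Z'$ is regular; and the standard fact that for an effective Cartier divisor $E$ on $Z$ one has $\pi^{*}E = \widetilde{E} + \sum_i a_i E_i'$ with $a_i \geq 0$. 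Applied to $E=g^{*}D$, the last identity gives $g'^{*}D = (g^{*}D)^{\sim} + (\text{effective exceptional})$.

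For $a)\Rightarrow b)$: pick $F_1$ as above, let $F_1'$ be its strict transform, and set $F'=\sum_{\bar h\in H/G_{F_1'}} hF_1'$. Then $F'$ is effective and $H$-stable, and $G_{F_1'}\subseteq H$ gives $\sum_{\bar\sigma\in G/H}\sigma F' = \sum_{\bar\rho\in G/G_{F_1'}}\rho F_1' = (g^{*}D)^{\sim}$, so $g'^{*}D - \sum_{\bar\sigma}\sigma F'$ is effective and exceptional. For $b)\Rightarrow a)$: decompose $F'=F_0+F_e$ into the sum of its non-exceptional and exceptional parts; both pieces are effective and $H$-stable. Comparing non-exceptional components in $b)$ yields $\sum_{\bar\sigma}\sigma F_0 = (g^{*}D)^{\sim} = \sum_{\tilde F\in O}\tilde F$, the coefficient-one orbit sum over the $G$-orbit $O$ of strict transforms of Weil primes on $Z$ over $D$ (coefficients are one since $D$ is unramified). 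Writing $F_0=\sum_{\tilde F\in O}a_{\tilde F}\tilde F$ with $a_{\tilde F}\geq 0$ and $H$-invariant, the identity forces $\sum_{\bar\sigma\in G/H}a_{\sigma^{-1}\tilde F}=1$ for each $\tilde F\in O$; thus $F_0$ is the sum over a single $H$-orbit $P\subseteq O$ whose $G/H$-translates partition $O$, and comparing sizes gives $G_{\tilde F_0}\subseteq H$ for any $\tilde F_0\in P$, so $G_{F_0}\subseteq H$ and the splitting criterion applies.

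For $c)\Rightarrow b)$: let $F'$ be the non-exceptional part of $F''$; restricting the equation in $c)$ to non-exceptional components gives $\sum_{\bar\sigma}\sigma F'=(g^{*}D)^{\sim}$, so $g'^{*}D-\sum_{\bar\sigma}\sigma F'$ is effective and exceptional. For $b)\Rightarrow c)$: set $E''=g'^{*}D-\sum_{\bar\sigma}\sigma F'=\sum_i e_i E_i'$, which is effective, exceptional, and $G$-invariant. I look for an effective $H$-stable $F'''$ with exceptional support and integers $d_i\in[0,(G:H))$ satisfying $\sum_{\bar\sigma}\sigma F''' = E'' + \sum_i d_i E_i'$; then $F'':=F'+F'''$ and these $d_i$'s produce $c)$. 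The problem decouples into one on each $G$-orbit $O$ of exceptional divisors. Writing $T_O$ for the orbit sum of $O$ and $s_P$ for the $H$-orbit sum of an $H$-orbit $P\subseteq O$, one has $E''|_O=e\,T_O$ and computes $\sum_{\bar\sigma\in G/H}\sigma s_P=\lambda_P T_O$ with $\lambda_P=|P|(G:H)/|O|\leq(G:H)$ (because $|P|\leq|O|$). Choosing $F'''|_O = k\,s_{P^{*}}$ for some $P^{*}$ and the least $k\geq 0$ with $k\lambda_{P^{*}}\geq e$ yields the desired $d\in[0,\lambda_{P^{*}})\subseteq[0,(G:H))$.

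The principal technical point is the combinatorial step in $b)\Rightarrow c)$, which hinges on the elementary observation that the relative norm $\sum_{\sigma\in G/H}\sigma(\cdot)$ from $H$-invariant to $G$-invariant cycles on an exceptional $G$-orbit has image whose ``index'' $\lambda_P$ is always bounded by $(G:H)$. Everything else is straightforward bookkeeping with strict transforms, parallel to the proof of Proposition~\ref{splitreg}.
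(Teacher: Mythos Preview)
Your argument is correct. The paper organizes things a bit differently and is shorter in one step, so let me indicate the comparison.

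Instead of working with strict transforms directly, the paper restricts to the maximal open $Z_0\subseteq Z$ over which $\pi$ is an isomorphism (also viewed as an open in $Z'$), notes that splitting of $D$ is detected already over $X_0=Z_0/G$ since $\codim(X\setminus X_0,X)\geq 2$, and applies Proposition~\ref{splitreg} verbatim on the regular scheme $Z_0$ to obtain the auxiliary condition $a')$: there is an effective $H$-stable $F'_0$ on $Z_0$ with $\sum_{\bar\sigma}\sigma F'_0=(g'^{*}D)|_{Z_0}$. The equivalences $a')\Leftrightarrow b)\Leftrightarrow c)$ then reduce to extending divisors from $Z_0$ to $Z'$ and restricting back. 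This packages your strict-transform combinatorics into a single invocation of the earlier proposition.

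The place where the paper is noticeably simpler is $b)\Rightarrow c)$. Since both $g'^{*}D$ and $\sum_{\bar\sigma}\sigma F'$ are $G$-invariant, the exceptional remainder $\sum_i a_i E'_i$ is $G$-invariant too, so the $a_i$ are constant on $G$-orbits. One then takes
\[
F''=F'+\sum_i \Bigl\lceil \tfrac{a_i}{(G:H)}\Bigr\rceil E'_i,
\]
which is $H$-stable because the added term is even $G$-invariant; summing over $G/H$ multiplies that term by $(G:H)$ and gives $d_i=(G:H)\lceil a_i/(G:H)\rceil-a_i\in[0,(G:H))$. Your $H$-orbit analysis with the constants $\lambda_P=|P|(G:H)/|O|$ is correct but more than is needed: one may always use the full $G$-orbit sum (the case $\lambda_P=(G:H)$), and then the ceiling trick finishes immediately.

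Both routes work; the paper's is a bit more economical, while yours makes the orbit combinatorics explicit and avoids appealing to Proposition~\ref{splitreg} as a black box.
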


\begin{proof}
Let $i: Z_0 \hookrightarrow Z$ be the maximal open subscheme of $Z$ over which $\pi$ is an isomorphism. By definition of $Z_0$, we can also define a canonical open immersion $i': Z_0 \hookrightarrow Z'$ such that $i = \pi \circ i'$; by abuse of notation, we will consider $Z_0$ as open subscheme of both $Z$ and $Z'$. Since $\pi$ is $G$-equivariant, $Z_0$ is $G$-invariant; so we can define $X_0 = Z_0/G$ and $Y_0 = Z/G$ as open subschemes of $X$ and $Y$, respectively. Since $Z$ is normal, $\codim(Z-Z_0,Z) \geq 2$ by \cite[Corollary 4.4.3]{Liu02}, hence also $\codim(X-X_0,X) \geq 2$. In particular, if $D$ is a Cartier prime divisor of $X$, its corresponding point lies in $X_0$.

Because being split is a local criterion, $D$ splits in $Y$ if and only if $D_0 = D|_{X_0}$ splits in $Y_0$. By Proposition \ref{splitreg}, the latter is equivalent to saying that there exists an effective $H$-stable Cartier divisor on $Z_0$ such that $\sum_{\bar\sigma\in G/H} \sigma F_0 = g_0^{*} D_0$, where $g_0=g|_{Z_0} : Z_0 \to X_0$. Since $g_0^* D_0 = (g^* D)|_{Z_0}  = {i'}^* \pi^* g^* D = ({g'}^* D)|_{Z_0}$, a) is equivalent to the following condition:\\
$(a')$ There exists an effective $H$-stable Cartier divisor $F'_0$ on $Z_0$ such that \[\sum_{\bar\sigma\in G/H} \sigma F'_0 = ({g'}^{*} D)|_{Z_0}.\]
$(a') \Ra (b)$: Extend $F'_0$ to an effective Cartier divisor $F'$ on $Z'$ by considering the closure of the corresponding Weil divisor as a Weil divisor on $Z'$. Then $F'$ automatically fulfills the conditions mentioned in $(b)$.\\
$(b) \Ra (c)$: Write ${g'}^* D = \sum_{\bar\sigma \in G/H} \sigma F'+\sum_{j=1}^r a_i E'_i$. Then $F'' = F' + \sum_{j=1}^r \lceil \frac{a_i}{(G:H)} \rceil E'_i$ is $H$-stable, since both $F'$ and ${g'}^* D$ are $H$-stable, and we get the assertion.\\
$(c) \Ra (a')$: $F'_0 = F''|_{Z_0}$ does the job.
\end{proof}

\subsection{Existence of $\mf D^\mc{C}_{D}$}\label{secexD}

Let us get back to the situation at the beginning of this section: Let $Z$ be a normal geometrically integral projective variety over a perfect field $k$, $G$ a finite group of $k$-automorphisms of $Z$. Assume that $Z$ possesses a $G$-equivariant resolution of singularities $\pi: Z' \to Z$. Let $H$ be a subgroup of $G$, $\Cc$ its conjugacy class, and set $X=Z/G$ and $Y=Z/H$ as above. In order to prove the existence of $\mf D^\mc{C}_{D}$ for a fixed divisor $D$ on $X$, we will first introduce another property of divisors:

\begin{dfn}
A geometrically integral Cartier divisor $D$ on $X$ is said to \emph{split geometrically} in $Y$ if for some field extension $k'|k$, the base changed Cartier divisor $D_{k'}$ on $X_{k'} := X \times_k k'$ splits in $Y_{k'} := Y \times_k k'$.
\end{dfn}

\begin{propdef}\label{existS}
 For every Cartier divisor $D$ on $X$, there exists a closed reduced subscheme $\mf S^Y_{D}$ of $\mf P'_{D}$ (see Remark \ref{unr}) representing the unramified geometrically integral divisors that are linearly equivalent to $D$ and split geometrically in $Y$.
\end{propdef}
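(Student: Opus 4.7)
The plan is to translate the condition ``$D'$ splits geometrically in $Y$'' into a closed condition on $\mf P'_D$ via the equivalent characterization on the resolution given by Proposition~\ref{splitnonreg}c). Write $g' = g\circ\pi: Z' \to X$ and let $E'_1,\ldots,E'_r$ be the prime divisors of $Z'$ supported in the exceptional locus of $\pi$; since $\pi$ is $G$-equivariant, $G$ permutes the set $\{E'_1,\ldots,E'_r\}$.

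First I would assemble the scheme-theoretic ingredients. The $G$-action on $Z'$ induces one on the separated $k$-scheme $\DIV_{Z'/k}$, and I would form the closed fixed subschemes $\DIV^H_{Z'/k}$ and $\DIV^G_{Z'/k}$, which represent $H$- and $G$-stable relative effective Cartier divisors, respectively. Multiplying local defining equations term by term yields a $k$-morphism
\[ \Sigma : \DIV^H_{Z'/k} \longrightarrow \DIV^G_{Z'/k}, \qquad F'' \longmapsto \sum_{\bar\sigma \in G/H}\sigma F''. \]
For every tuple $\mathbf d = (d_1,\ldots,d_r)$ with $0 \le d_i < (G:H)$ and $d_{\sigma(i)} = d_i$ for all $\sigma\in G$ (so that $\sum d_iE'_i$ is $G$-stable), composing the pullback $\mathbf{g'}^*:\DIV_{X/k}\to\DIV_{Z'/k}$ from Lemma~\ref{schpb} with the addition of $\sum d_iE'_i$ (again given by multiplying local equations) produces a morphism
\[ \beta_{\mathbf d} : \mf P'_D \longrightarrow \DIV^G_{Z'/k}, \qquad D' \longmapsto {g'}^*D' + \sum_i d_iE'_i. \]

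Next I would form the fibre product $W_{\mathbf d} := \mf P'_D \times_{\DIV^G_{Z'/k}} \DIV^H_{Z'/k}$ (with respect to $\beta_{\mathbf d}$ and $\Sigma$), and take $\mf S^Y_{D,\mathbf d}$ to be the reduced closed subscheme of $\mf P'_D$ underlying the image of the projection $W_{\mathbf d}\to\mf P'_D$. Closedness of this image follows from the properness of each numerical-class component $\DIV^{\cln{\cdot}}_{Z'/k}$ recalled at the end of Section~\ref{secpfpb}: only finitely many such components can contain an $H$-stable $F''$ whose image under $\Sigma$ lies in the component of $\DIV^G_{Z'/k}$ hitting $\beta_{\mathbf d}(\mf P'_D)$, and $\Sigma$ restricted to each of these proper components is proper, so the projection from the corresponding piece of $W_{\mathbf d}$ to $\mf P'_D$ is proper as well. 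As there are finitely many admissible $\mathbf d$, the finite union
\[ \mf S^Y_D := \bigcup_{\mathbf d} \mf S^Y_{D,\mathbf d} \]
is again a reduced closed subscheme of $\mf P'_D$.

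The remaining step --- which I expect to be the main obstacle --- is to verify that for every field extension $K|k$, $\mf S^Y_D(K)$ equals the set of those $D' \in \mf P'_D(K)$ that split geometrically in $Y$. If $D'$ splits geometrically, then by Proposition~\ref{splitnonreg}c) applied over a sufficiently large extension $K'|K$ one finds an $H$-stable effective $F'' \in \DIV^H_{Z'/k}(K')$ and integers $0 \le d_i < (G:H)$ with $\Sigma(F'') = \beta_{\mathbf d}(D'_{K'})$; comparison of $G$-stability on both sides forces $\mathbf d$ to be $G$-invariant, so $(D'_{K'},F'')$ is a $K'$-point of $W_{\mathbf d}$ lying over $D'$, placing $D'$ in $\mf S^Y_{D,\mathbf d}(K)$. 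Conversely, if the topological point of $D' \in \mf P'_D(K)$ lies in the image of $W_{\mathbf d}\to\mf P'_D$, then by passing to a suitable residue-field extension $K'|K$ one produces an actual $F'' \in \DIV^H_{Z'/k}(K')$ with $\Sigma(F'') = \beta_{\mathbf d}(D'_{K'})$, and Proposition~\ref{splitnonreg}c) applied over $K'$ yields the geometric splitting of $D'$ in $Y$. The subtlety here is that ``image of $\Sigma$ at $K$-points'' need not be ``set of $K$-points of the image''; properness of the relevant restrictions of $\Sigma$ is precisely what bridges the two.
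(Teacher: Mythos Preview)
Your proof is correct and follows essentially the same route as the paper's: both translate Proposition~\ref{splitnonreg}c) into a closed condition by forming the ``sum over cosets'' morphism $\DIV^H_{Z'/k}\to\DIV_{Z'/k}$, arguing its properness/finiteness via the compact-basis property of the pseudoeffective cone, and then taking the finite union of preimages over the admissible tuples $\mathbf d$. Your additional restriction to $G$-invariant $\mathbf d$ and your more explicit treatment of the field-extension subtlety in checking representability are minor elaborations rather than a genuinely different approach.
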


\begin{proof}

As in Proposition \ref{splitnonreg}, let $E'_1, \ldots E'_r$ be the Cartier prime divisors on $Z'$ with support in the exceptional locus of $\pi$, and set $g' := g \circ \pi: Z' \to X$. Let 
\[\psi: \DIV_{Z'/k}^H \to \DIV_{Z'/k}\] 
be the morphism induced by $F' \mapsto \sum_{\bar\sigma \in G/H} \sigma F'$, and for $\underline{d} = (d_1, \ldots, d_r) \in \{0, \ldots, (G\!:\!H)\!-\!1\}^r,$ let
\[\varphi_{\underline{d}}: \DIV_{Z'/k} \to \DIV_{Z'/k}\]  
be the morphism induced by $F' \mapsto F' + \sum_{i=1}^r d_i E'_i$. Using the fact that the pseudoeffective cone has a compact basis (\cite[Proposition 1.3]{BFJ09}, see also Lemma \ref{psef}), we easily see that $\psi$ is proper. Also, as every effective divisor can be written as a sum of effective divisors in finitely many ways, $\psi$ is quasifinite, hence finite. In particular, $\im \psi$ is a closed subset of $\DIV_{Z'/k}$. Therefore, the preimage $W_{\underline{d}}$ of $\im \psi$ under the morphism $\varphi_{\underline{d}} \circ {g'}^* : \mf P'_D \to \DIV_{Z'/k}$ is a closed subset of $\mf P'_D$ for any choice of $\underline{d}$. Set
\[\mf S^Y_D = \bigcup_{\underline{d}} W_{\underline{d}}\] 
with the reduced closed subscheme structure. Then by Proposition \ref{splitnonreg}, $\mf S^Y_D$ represents exactly the unramified geometrically integral divisors that are linearly equivalent to $D$ and split geometrically in $Y$.
\end{proof}

\begin{cor}\label{existD}
 Let $Z, G, X, \Cc , D$ be as above. Then the scheme $\mf D^\mc{C}_{D}$ exists.
\end{cor}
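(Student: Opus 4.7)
The plan is to fix a representative $H \in \Cc$ and re-express the condition ``geometric decomposition class equals $\Cc$'' in terms of geometric splitting in the intermediate covers $Z/H''$, as $H''$ ranges over subgroups of $H$. The key group-theoretic observation to establish is: an unramified geometrically integral divisor $D' \in P'_D(K)$ has geometric decomposition class $\Cc$ if and only if $D'$ splits geometrically in $Z/H$ and does not split geometrically in $Z/H''$ for any proper subgroup $H'' \subsetneq H$. For the forward direction, if the decomposition group over $\bar K$ equals some conjugate $gHg^{-1}$, then translating by $g^{-1}$ (a Galois element) produces a prime above $D'_{\bar K}$ whose decomposition group equals $H$ itself, so $D'$ splits geometrically in $Z/H$ by Proposition~\ref{splitreg} (respectively Proposition~\ref{splitnonreg}); on the other hand, any conjugate of $H$ has strictly larger cardinality than $H''$, so it cannot lie inside any conjugate of $H''$, and $D'$ cannot split in $Z/H''$. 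Conversely, if $D'$ splits geometrically in $Z/H$, the decomposition group $G_z$ of some prime over $D'_{\bar K}$ is contained in $H$; were $G_z$ a proper subgroup of $H$, then $H'' := G_z$ would be a proper subgroup in which $D'$ still splits, contradicting the hypothesis. Hence $G_z = H$, so the geometric decomposition class of $D'$ is $\Cc$.

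Given this dictionary, I would define
\[\mf D^\Cc_D := \mf S^{Z/H}_D \setminus \bigcup_{H'' \subsetneq H} \mf S^{Z/H''}_D,\]
equipped with the reduced subscheme structure, where $H''$ runs over the finitely many proper subgroups of $H$ and each $\mf S^{Z/H''}_D$ is furnished by Proposition/Definition~\ref{existS} (applied to the given $G$-equivariant resolution of $Z$, which is a fortiori $H''$-equivariant for every $H'' \subseteq H \subseteq G$). The right-hand side is the difference of a closed subset and a finite union of closed subsets, hence locally closed in $\mf P'_D$ and a fortiori in $\mf P_D$. By construction, the $K$-valued points of $\mf D^\Cc_D$ correspond exactly to those unramified geometrically integral divisors in $|D_K|$ that split geometrically in $Z/H$ but not in any $Z/H''$ with $H'' \subsetneq H$; by the group-theoretic equivalence above, these are precisely the elements of $D^\Cc_D(K)$. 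Since $\mf D^\Cc_D$ is reduced and of finite type over $k$, this bijection on field-valued points is what is meant by ``representing'' the functor, consistent with the convention already used for $\mf S^Y_D$ in Proposition/Definition~\ref{existS}.

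The main step, and essentially the only conceptual one, is the group-theoretic characterization of decomposition classes through splitting in all intermediate covers below $H$; once this is in hand, the corollary is a purely formal assembly of the closed subschemes $\mf S^{Z/H''}_D$ constructed in the previous subsection. Independence of the chosen representative $H \in \Cc$ is automatic, since both the functor $D^\Cc_D$ and the characterization depend only on the conjugacy class.
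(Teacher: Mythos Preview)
Your proof is correct and follows the same approach as the paper: fix a representative $H \in \Cc$, characterize ``geometric decomposition class $\Cc$'' as splitting geometrically in $Z/H$ but in no $Z/H''$ with $H'' \subsetneq H$, and then set $\mf D^\Cc_D = \mf S^{Z/H}_D \setminus \bigcup_{H'' \subsetneq H} \mf S^{Z/H''}_D$. The paper phrases the index set as intermediate covers $Z \to Y' \to Y$ (which by Galois correspondence are exactly the $Z/H''$ for $H'' \subsetneq H$) and leaves the group-theoretic equivalence to ``a similar argument to Proposition~\ref{splitnonreg}'', whereas you spell it out; otherwise the arguments are identical.
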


\begin{proof}
Let $H$ be a representative of $\mc{C}$, $Y = Z/H$, $Z \xrightarrow{h} Y \xrightarrow{f} X$ be the corresponding morphisms. Then a prime divisor $D'$ in $X$ has geometric decomposition class $\mc{C}$ if and only if $D'$ splits geometrically in $Y$, but no further extension $Z \to Y' \to Y$, as can be seen by a similar argument to Proposition \ref{splitreg}. Therefore,
\begin{equation}\label{defDbyS}\mf D^\mc{C}_D = \mf S^Y_D \backslash \bigcup_{Z \to Y' \to Y} \mf S^{Y'}_D,\end{equation}
considered as an open subscheme of $\mf S^Y_D$.
\end{proof}

\section{Volume of divisors}\label{secvol}

For the proof of the asymptotic behaviour of $\dim \mf{P}_{mD_0}$ and $\dim \mf{D}^\Cc_{mD_0}$ described in  Theorem \ref{dens1}, we need the theory of the volume of a divisor, which is covered in detail in \cite[Section 2.2.C]{Laz04}. We will need to determine the behaviour of the volume under pull-back and push-forward. This will be done in the following section. Throughout this section, $X$ and $Y$ will be normal projective varieties of dimension $d$ over a given field $k$.

\subsection{Volume}
We first recollect the definition and some basic properties of the volume, referring to \cite[Section 2.2.C]{Laz04} for proofs:

For a Cartier divisor $D$ on $X$, its \emph{volume} is defined to be \[\vol_X(D) = \limsup_{m\to \infty} \frac{h^0(X,mD)}{m^d / d!}.\]
If no confusion can arise, we will write $\vol(D)$ instead of $\vol_X(D)$.
This definition is motivated by the \emph{asymptotic Riemann-Roch theorem} (\cite[Example 1.2.19]{Laz04}):
\begin{prop}\label{asyRR}
For an ample divisor $D$ and any coherent sheaf $\Fc$ on $X$, we have
\[h^0\bigl(X,\Fc \otimes \O_X(mD)\bigr) = \rk(\Fc) \frac{D^d}{d!} m^d + O(m^{d-1}).\]
In particular, $\vol(D) = D^d > 0$.
\end{prop}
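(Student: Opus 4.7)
The statement is classical (indeed the paper refers to \cite[Section 2.2.C]{Laz04}), and I would prove it in two clean stages: reduce to the Euler characteristic, then evaluate the Euler characteristic as a polynomial in $m$.

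\emph{Step 1: Replace $h^0$ by $\chi$.} Since $D$ is ample, Serre's vanishing theorem applied to the coherent sheaf $\mathcal{F}$ gives $H^i(X,\mathcal{F}\otimes\mathcal{O}_X(mD))=0$ for all $i>0$ and all $m\gg 0$. Consequently, for $m$ large enough,
\[
h^0(X,\mathcal{F}\otimes\mathcal{O}_X(mD)) \;=\; \chi(X,\mathcal{F}\otimes\mathcal{O}_X(mD)),
\]
and it suffices to analyze the Euler characteristic on the right.

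\emph{Step 2: Snapper's polynomial.} I would then invoke the Snapper/Kleiman polynomial theorem (cf.\ \cite[Example 1.2.19]{Laz04}), which asserts that for any coherent sheaf $\mathcal{F}$ on a complete variety and any Cartier divisor $D$, the function $m\mapsto \chi(X,\mathcal{F}\otimes\mathcal{O}_X(mD))$ is a numerical polynomial in $m$ of degree at most $d=\dim X$, with leading coefficient $\rk(\mathcal{F})\cdot\frac{D^d}{d!}$. This is the key input: in characteristic zero one can simply cite Hirzebruch--Riemann--Roch and read off the leading term, and in general one uses the dévissage argument on $\mathcal{F}$ (reducing to the structure sheaves of integral subvarieties via the standard filtration) together with the multilinearity of intersection numbers. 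Combining Steps 1 and 2 yields the displayed asymptotic expansion $h^0 = \rk(\mathcal{F})\frac{D^d}{d!}m^d+O(m^{d-1})$.

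\emph{Step 3: Specialization to $\mathcal{F}=\mathcal{O}_X$ and positivity.} Setting $\mathcal{F}=\mathcal{O}_X$ gives $h^0(X,mD) = \frac{D^d}{d!}m^d + O(m^{d-1})$, whence $\limsup_{m\to\infty} h^0(X,mD)/(m^d/d!) = D^d$, i.e.\ $\vol(D)=D^d$. Positivity of this self-intersection number for ample $D$ is either immediate from the Nakai--Moishezon criterion (which guarantees $D^d>0$ whenever $D$ is ample) or, alternatively, can be deduced directly from the fact that a very ample divisor has nonzero global sections in every positive multiple, so that $h^0(X,mD)$ grows like $m^d$ by projective embedding considerations.

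The only mildly delicate point is Step 2 in positive characteristic, where Hirzebruch--Riemann--Roch is unavailable in its smooth-variety form; there one really needs the Snapper argument via the filtration of $\mathcal{F}$, but this is entirely standard and black-boxed in \cite{Laz04}. Everything else is routine.
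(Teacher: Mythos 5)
Your proof is correct and follows the same route the paper relies on: the paper offers no proof of its own, simply citing \cite[Example 1.2.19]{Laz04}, and Lazarsfeld's argument there is exactly the Serre-vanishing-plus-Snapper-polynomial reduction you describe. The only caveat worth keeping in mind is that the $O(m^{d-1})$ bound for all $m$ (not just $m\gg0$) requires a tiny extra observation — for the finitely many small $m$ where Serre vanishing has not yet kicked in, $h^0$ is still bounded by $\chi$ plus a finite amount — but this is absorbed into the constant and does not affect the asymptotic.
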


The volume increases in effective directions, i.e.\ if $D, E$ are Cartier divisors with $E$ effective, then $\vol(D+E) \geq \vol(D)$. It only depends on the numerical equivalence class of a divisor, and for any Cartier divisor $D$ and $a \in \N$, we have $\vol(aD)=a^d \vol(D)$.
These last properties are used to extend the volume to a function on $\NeSe{X}_\Q := \NeSe{X} \otimes_\Z \Q$. The extended function proves to be continuous with respect to the archimedean topology and is hence extended even further to a continuous function on the N\'{e}ron-Severi space $\NeSe{X}_\R$.

A Cartier divisor $D$ with $\vol(D)>0$ is called \emph{big}. The set of big divisors coincides with the set of Cartier divisors $D$ with $\cln{D}$ in the interior of the pseudoeffective cone $\overline\Eff(X)$.  One of the most important properties of the volume on big divisors is its log-concavity:

\begin{prop}\label{logconcave}
Let $D, D'$ be big Cartier divisors on $X$. Then
\[\vol(D)^{\frac{1}{d}} + \vol(D')^{\frac{1}{d}}\leq \vol(D+D')^{\frac{1}{d}}.\]
If $D + D'$ is ample, then equality holds if and only if $\cln{D}$ and $\cln{D'}$ are proportional, i.e.\ if there are positive integers $n$, $n'$ with $n \cln{D} = n' \cln{D'}$.
\end{prop}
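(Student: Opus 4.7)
The plan is to reduce to the ample case via Fujita approximation, prove the ample case by the Khovanskii--Teissier inequalities, and handle the equality statement by combining the equality case of Khovanskii--Teissier with a careful passage to the limit.

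\textbf{Step 1 (ample case).} For ample $\Q$-divisors $D, D'$ on $X$, the Khovanskii--Teissier inequalities read
\[ (D^i \cdot {D'}^{d-i})^d \;\geq\; (D^d)^i \, ({D'}^d)^{d-i} \qquad (0 \leq i \leq d).\]
Taking $d$-th roots and expanding binomially,
\[ (D+D')^d \;=\; \sum_{i=0}^d \binom{d}{i} D^i \cdot {D'}^{d-i} \;\geq\; \left((D^d)^{1/d} + ({D'}^d)^{1/d}\right)^d,\]
and since $\vol(D) = D^d$ for ample $D$ by Proposition \ref{asyRR}, this is the desired log-concavity on the ample cone. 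The classical equality case of Khovanskii--Teissier (for any fixed $i$ with $0 < i < d$) forces $\cln{D}$ and $\cln{D'}$ to be proportional in $\NeSe{X}_\R$, hence the same follows for the summed inequality.

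\textbf{Step 2 (extension to big divisors).} Given big Cartier divisors $D, D'$ and $\varepsilon > 0$, Fujita's approximation theorem (\cite[Theorem 11.4.4]{Laz04}) yields a birational morphism $\mu: X' \to X$ from a normal projective variety and decompositions $\mu^* D = A + E$, $\mu^* D' = A' + E'$ with $A, A'$ ample $\Q$-divisors, $E, E'$ effective $\Q$-divisors, and $\vol(A) > \vol(D) - \varepsilon$, $\vol(A') > \vol(D') - \varepsilon$. Because $\mu$ is birational, $\vol(\mu^*(D+D')) = \vol(D+D')$. Since the volume is monotone along effective directions and $A+A'$ is ample, Step~1 on $X'$ gives
\[ \vol(D+D')^{1/d} \;\geq\; \vol(A+A')^{1/d} \;\geq\; \vol(A)^{1/d} + \vol(A')^{1/d} \;>\; (\vol(D)-\varepsilon)^{1/d} + (\vol(D')-\varepsilon)^{1/d}.\]
Letting $\varepsilon \to 0$ proves the inequality for arbitrary big divisors.

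\textbf{Step 3 (equality when $D+D'$ is ample).} Suppose equality holds. Fixing $\varepsilon > 0$ and using Step~2, the ample approximations $A, A'$ on $X'$ must satisfy
\[ \vol(A)^{1/d} + \vol(A')^{1/d} \;\geq\; \vol(A+A')^{1/d} - \delta(\varepsilon),\]
with $\delta(\varepsilon) \to 0$ as $\varepsilon \to 0$. The quantitative form of the Khovanskii--Teissier equality case then forces $\cln{A}$ and $\cln{A'}$ to be nearly proportional; passing to the limit via continuity of $\vol$ on $\NeSe{X'}_\R$ and pushing forward to $\NeSe{X}_\R$, one concludes $\cln{D}$ and $\cln{D'}$ are proportional. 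Alternatively (and more cleanly), one may appeal to the Boucksom--Favre--Jonsson differentiability theorem \cite{BFJ09}, which directly implies strict log-concavity of $\vol^{1/d}$ on the big cone off proportional rays.

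\textbf{Main obstacle.} The inequality itself is essentially a formal consequence of Khovanskii--Teissier and Fujita approximation. The genuine difficulty is the equality statement: the equality case of Khovanskii--Teissier is classical only for ample divisors, while here $D, D'$ are merely big. The ample-sum hypothesis $D + D'$ ample is what makes a limiting argument tractable, but controlling the proportionality of the ample approximations quantitatively in the limit is the delicate point I would expect to occupy most of the proof.
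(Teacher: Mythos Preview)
Your Steps~1 and~2 match the paper's proof essentially verbatim: Khovanskii--Teissier (equivalently, the generalized Hodge index inequality \eqref{HIT}) for the ample case, and Fujita approximation to extend the inequality to big divisors.

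The equality case (your Step~3) is where you diverge. You propose either a quantitative limiting argument through the Fujita approximations, or an appeal to the BFJ differentiability result. Both are plausible, but the paper does something more elementary and direct that sidesteps the ``delicate point'' you flag. Namely: assuming $\cln{D}, \cln{D'}$ are \emph{not} proportional, one uses the hypothesis that $D+D'$ is ample to perturb $D, D'$ into the ample cone. Concretely, choose $\varepsilon>0$ small enough that
\[
A \;:=\; \varepsilon D + \tfrac{1-\varepsilon}{2}(D+D')
\quad\text{and}\quad
A' \;:=\; \varepsilon D' + \tfrac{1-\varepsilon}{2}(D+D')
\]
are both ample; then $A+A'=D+D'$, and $A,A'$ are still non-proportional. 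The strict inequality in the ample case (Step~1) gives
\[
\vol(D+D')^{1/d} \;>\; \vol(A)^{1/d}+\vol(A')^{1/d},
\]
and then the already-proved log-concavity, applied once more to each summand together with homogeneity, yields
\[
\vol(A)^{1/d}+\vol(A')^{1/d} \;\geq\; \varepsilon\,\vol(D)^{1/d}+(1-\varepsilon)\,\vol(D+D')^{1/d}+\varepsilon\,\vol(D')^{1/d},
\]
which rearranges to the strict inequality $\vol(D+D')^{1/d} > \vol(D)^{1/d}+\vol(D')^{1/d}$.

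This trick buys you the equality statement with no limiting control over the Fujita models and no BFJ input; it only uses strict concavity on the ample cone (which is the classical Hodge-index equality case) plus the non-strict inequality you already have for big classes. Your approach is not wrong, but what you identify as the main obstacle is in fact avoidable.
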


\begin{proof}
In the case $\ch k = 0$, the proof of the inequality is given in \cite[Theorem 11.4.9]{Laz04a}. For ample divisors $D$, $D'$, it is based on the Hodge Index Theorem  \cite[Theorem 1.6.1]{Laz04} which states that for ample divisors $D_1, \ldots, D_d$, one has
\begin{equation}\label{HIT}
(D_1^d) \cdots (D_d^d) \leq (D_1 \cdots D_d)^d.
\end{equation}
The generalization to big divisors is done by using Fujita's approximation theorem (\cite{Fuj94} for $\ch k = 0$, \cite[Theorem 0.1]{Tak07} in the general case).

To prove the statement about the equality case, let us first assume that  $D$ and $D'$ are ample. Then this statement follows from the fact that \eqref{HIT} is an equality if and only if $\cln{D_1}, \cln{D_2}, \ldots, \cln{D_d}$ are proportional, which itself can be derived recursively from the two-dimensional Hodge Index Theorem (\cite[Theorem V.1.9]{Har77}). Therefore, we see that $\vol(\cdot)^{\frac{1}{d}}$ is strictly concave on the ample cone (see also \cite[Corollary E]{BFJ09}).

For general big divisors $D$, $D'$ with $D+D'$ ample and $\cln{D}$ not proportional to $\cln{D'}$, choose an $\varepsilon >0$ such that $\varepsilon D + \frac{1-\varepsilon}{2}(D+D')$ and $\varepsilon D' + \frac{1-\varepsilon}{2}(D+D')$ are ample (this is possible since the ample cone is open in $\NeSe{X}_\R$). Then
\[\begin{split}
\vol(D+D')^{\frac{1}{d}} & > \vol\left(\varepsilon D + \frac{1-\varepsilon}{2}(D+D')\right)^{\frac{1}{d}} + \vol\left(\varepsilon D' + \frac{1-\varepsilon}{2}(D+D')\right)^{\frac{1}{d}}\\
& \geq \varepsilon \vol(D)^{\frac{1}{d}}+ (1- \varepsilon) \vol(D+D')^{\frac{1}{d}} + \varepsilon \vol(D')^{\frac{1}{d}},
\end{split}\]
which is equivalent to $\vol(D+D')^{\frac{1}{d}} > \vol(D)^{\frac{1}{d}} + \vol(D')^{\frac{1}{d}}$.
\end{proof}

\subsection{Behaviour of volume in covers}

We want to investigate the volume of pull-backs and push-forwards of divisors.

\begin{lemma}\label{volpullback}
Let $f:Y \to X$ be a proper, dominant, generically finite morphism of normal projective varieties over $k$. For any $D \in \Div(X)$, we have \[\vol_Y(f^* D) = \deg(f)\vol_X(D).\]
\end{lemma}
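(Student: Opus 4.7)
My plan is to reduce to the case where $f$ is finite via Stein factorization, apply the projection formula, extract the asymptotic growth from asymptotic Riemann--Roch for ample $D$, and extend to arbitrary divisors by a sandwich argument on $f_*\O_Y$.

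First I would invoke Stein factorization to write $f = g \circ h$ with $h: Y \to \bar Y$ proper birational and $g: \bar Y \to X$ finite of degree $\deg(f)$. Since $Y$ is normal, $h_*\O_Y = \O_{\bar Y}$, and the projection formula gives $H^0(Y, h^*\Lc) = H^0(\bar Y, \Lc)$ for every line bundle $\Lc$ on $\bar Y$. Hence $\vol_Y(h^*A) = \vol_{\bar Y}(A)$ for every $A \in \Div(\bar Y)$, reducing the problem to the case that $f$ is finite, of degree $n := \deg(f)$. Setting $\Fc := f_*\O_Y$, a torsion-free coherent sheaf on $X$ of generic rank $n$, the projection formula now reads $H^0(Y, m f^*D) = H^0(X, \Fc \otimes \O_X(mD))$.

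When $D$ is ample, Proposition \ref{asyRR} applied to $\Fc$ immediately yields
\[ h^0(Y, m f^*D) = n \cdot \frac{D^d}{d!}\, m^d + O(m^{d-1}), \]
so $\vol_Y(f^*D) = n \cdot D^d = n\,\vol_X(D)$. For general $D$ I would sandwich $\Fc$ between twists of trivial bundles: fixing $H$ ample and $k$ large enough that both $\Fc(kH)$ and $\Fc^\vee(kH)$ are globally generated, choosing $n$ generic sections of $\Fc(kH)$ produces an injection $\O_X(-kH)^{\oplus n} \hookrightarrow \Fc$, while dualizing a surjection $\O_X^{\oplus N} \twoheadrightarrow \Fc^\vee(kH)$ and using $\Fc \hookrightarrow \Fc^{\vee\vee}$ produces an injection $\Fc \hookrightarrow \O_X(kH)^{\oplus N}$. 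Twisting by $\O_X(mD)$ and taking $H^0$ sandwiches
\[ n \cdot h^0(X, mD - kH) \;\leq\; h^0(Y, m f^*D) \;\leq\; N \cdot h^0(X, mD + kH), \]
and by continuity of $\vol$ on $\NeSe{X}_\R$ one has $\vol_X(D \pm \tfrac{k}{m}H) \to \vol_X(D)$, which pins down $\limsup_{m\to\infty} h^0(Y, m f^*D)/(m^d/d!) = n\,\vol_X(D)$. If $D$ is not big, then $\vol_X(D) = 0$ while the identity $f_*f^*D = nD$ forces $f^*D$ also to fail to be big, so both sides vanish.

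The main obstacle I expect is the asymptotic $\limsup_m h^0(X, mD \pm kH)/(m^d/d!) = \vol_X(D)$ for big but non-ample $D$ and fixed ample $H$: this requires that twisting by a bounded multiple of $H$ only perturbs $h^0$ by a lower-order term, which ultimately rests on Fujita's approximation theorem or on the existence (not merely as a $\limsup$) of $\lim_m h^0(X,mD)/(m^d/d!)$ for big $D$. Should the sandwich bound prove too delicate to finish this way, the alternative is to run Fujita approximation directly: for $\varepsilon > 0$, approximate $D$ by an ample class $A$ on a birational modification $\mu: X' \to X$ with $A^d \geq \vol_X(D) - \varepsilon$, pull back to a birational modification $Y' \to Y$ dominating $X'$, and apply the ample case on the finite morphism $Y' \to X'$; this gives both inequalities up to $\varepsilon$, and letting $\varepsilon \to 0$ concludes the proof.
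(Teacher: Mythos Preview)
Your sandwich argument for non-ample $D$ has a genuine gap: the upper inclusion $\Fc \hookrightarrow \O_X(kH)^{\oplus N}$ involves $N$, the number of generators of $\Fc^\vee(kH)$, not the generic rank $n$. The resulting inequality $h^0(Y, mf^*D) \leq N\cdot h^0(X, mD+kH)$ only yields $\vol_Y(f^*D) \leq N\cdot\vol_X(D)$, which together with your lower bound gives $n\,\vol_X(D) \leq \vol_Y(f^*D) \leq N\,\vol_X(D)$ and does \emph{not} pin down the value. Continuity of the volume cannot turn $N$ into $n$. Your Fujita fallback would work, but is heavy machinery for this lemma.

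The paper's proof is both simpler and uniform in $D$. Since $f_*\O_Y$ is free of rank $n$ over a dense open $U\subset X$, a trivialization there embeds $f_*\O_Y \hookrightarrow \Kc_X^n$ into the constant sheaf of rational functions. Setting $\Gc := f_*\O_Y \cap \O_X^n$ inside $\Kc_X^n$ gives two short exact sequences
\[
0 \to \Gc \to f_*\O_Y \to \Gc_1 \to 0,\qquad 0 \to \Gc \to \O_X^n \to \Gc_2 \to 0,
\]
with $\Gc_1,\Gc_2$ supported off $U$, hence in dimension $<d$. Then $h^0(X,\Gc_i(mD)) = O(m^{d-1})$ for \emph{every} Cartier divisor $D$, and the long exact sequences give $h^0(Y, mf^*D) = n\cdot h^0(X, mD) + O(m^{d-1})$ directly---no twist by $H$, no appeal to continuity of $\vol$, and no case distinction between ample and big. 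The point you missed is that the upper comparison sheaf can be taken of rank exactly $n$, not merely $N$, by using the generic trivialization rather than global generation of the dual. Your Stein factorization step is harmless but also unnecessary: the same argument works verbatim for proper, generically finite $f$.
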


\begin{proof}
By the projection formula, we have
\[H^0(Y,\O_Y(m f^* D)) =  H^0(X,f_* (\O_Y(m f^* D)))\cong  H^0(X,(f_*\O_Y)(m D)),\]
so we can restrict our attention to $f_* \O_Y$. There is an open dense subset $U$ of $X$ such that $f_* \O_Y$ is free of rank $n = \deg(f)$, so  $(f_* \O_Y)|_U \simeq \O_U^n$. This isomorphism gives an injection $f_* \O_Y \hookrightarrow \Kc_X^n$, where $\Kc_X$ is the sheaf of total quotient rings of $\O_X$. Set $\Gc = f_* \O_Y \cap \O_X ^n$ and define $\Gc_1$ and $\Gc_2$ by the exact sequences of sheaves
\begin{align*}
0 \to \Gc \to & \, f_* \O_Y  \to \Gc_1 \to 0, \\
0 \to \Gc \to & \,\;\;\O_X^n\;  \to \Gc_2 \to 0.
\end{align*}
The supports of $\Gc_1$ and $\Gc_2$ do not meet $U$, hence have dimension less than $d$. Therefore, $h^0(X, \Gc_i(mD)) = O(m^{d-1})$ for $i=1,2$ by \cite[Proposition 1.31]{Deb01}. Using the long exact cohomology sequence, this implies
\[\begin{split} h^0(Y,\O_Y(m f^* D)) = h^0(X,(f_*\O_Y)(m D)) &= h^0(X,\O_X^n(m D)) + O(m^{d-1})\\ &= n \cdot h^0(X,\O_X(m D)) + O(m^{d-1}),\end{split}\]
from which the assertion follows.
\end{proof}

\begin{prop}\label{volpushforward}
Let $f: Y \to X$ be a finite branched cover of normal projective varieties over $k$ of dimension $d$. For any $E \in \Div(Y)$, we have
\begin{equation}\label{eqvolpf}
\vol_Y(E) \leq \frac{1}{\deg(f)^{d-1}} \vol_X(D),
\end{equation}
where $D=f_* E$. If $D$ is ample, then equality holds if and only if $f^* D \equiv \deg(f) E$.
\end{prop}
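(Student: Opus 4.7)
The plan is to reduce to a Galois setting via the Galois closure, turn the push‑forward identity of Lemma~\ref{pfthrupb} into a decomposition of $g^*D$ as a sum of effective divisors, and then apply the log‑concavity of the volume (Proposition~\ref{logconcave}) together with the volume pull‑back formula (Lemma~\ref{volpullback}).

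First, if $E$ is not big then $\vol_Y(E)=0$ and the inequality is trivial, so I may assume $E$ big. Following Remark~\ref{makeGal}, let $g:Z\to X$ be a Galois closure of $f$, write $G=\Gal(K(Z)|K(X))$, $H=\Gal(K(Z)|K(Y))$, so $Y=Z/H$, and denote by $h:Z\to Y$ the canonical morphism, with $g=f\circ h$ and $\deg(f)=(G:H)$. By Lemma~\ref{pfthrupb} applied to $E\in\Div(Y)$ and $D=f_*E\in\Div(X)$, we get the key identity
\[ g^*D \;=\; \sum_{\bar\sigma\in G/H}\sigma\, h^*E, \]
a sum of $(G:H)$ effective divisors on $Z$ (each obtained from the effective part of $h^*E$ by an automorphism, hence with the same volume as $h^*E$; and since $E$ is big, so is $h^*E$ by Lemma~\ref{volpullback}, hence each summand is big).

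Iterating the log‑concavity inequality of Proposition~\ref{logconcave} on these $(G:H)$ big summands yields
\[ \vol_Z(g^*D)^{1/d} \;\geq\; \sum_{\bar\sigma\in G/H}\vol_Z(\sigma\, h^*E)^{1/d} \;=\; (G:H)\,\vol_Z(h^*E)^{1/d}. \]
Combining this with $\vol_Z(g^*D)=\#G\cdot\vol_X(D)$ and $\vol_Z(h^*E)=\#H\cdot\vol_Y(E)$ from Lemma~\ref{volpullback}, and using $\#G=(G:H)\cdot\#H$, a direct rearrangement gives $\vol_X(D)\geq (G:H)^{d-1}\vol_Y(E)=\deg(f)^{d-1}\vol_Y(E)$, which is \eqref{eqvolpf}.

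For the equality case, assume $D$ is ample. Then $g^*D$ is ample (pull‑back of ample under a finite surjective morphism is ample), so the hypothesis of the equality clause of Proposition~\ref{logconcave} is satisfied at every step of the iteration. This forces the summands $\sigma h^*E$ to be pairwise numerically proportional; since they all have the same volume, they must be numerically equal, whence $g^*D\equiv (G:H)\,h^*E=h^*(\deg(f)\,E)$. As $g^*D=h^*f^*D$, we get $h^*\bigl(f^*D-\deg(f)E\bigr)\equiv 0$; but $h^*:\NeSe{Y}\to\NeSe{Z}$ is injective (since $h_*\circ h^*$ is multiplication by $\deg(h)$ and $\NeSe{Y}$ is torsion‑free), so $f^*D\equiv\deg(f)\,E$. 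The converse is immediate: if $f^*D\equiv\deg(f)E$ then $\vol_Y(f^*D)=\deg(f)^d\vol_Y(E)$, and by Lemma~\ref{volpullback} this equals $\deg(f)\vol_X(D)$, which yields equality in \eqref{eqvolpf}. The main subtle point in the argument is the equality case, specifically the bookkeeping needed to propagate the equality criterion of log‑concavity through the iteration and to descend the resulting numerical identity from $Z$ back to $Y$.
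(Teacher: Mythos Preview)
Your proof is correct and follows essentially the same approach as the paper: pass to the Galois closure, use Lemma~\ref{pfthrupb} to decompose $g^*D$ as a sum over $G/H$, then apply log-concavity (Proposition~\ref{logconcave}) together with Lemma~\ref{volpullback}; for the equality case the paper descends via $h_*$ where you use injectivity of $h^*$ on $\NeSe{Y}$, which is equivalent. One minor slip: the summands $\sigma h^*E$ need not be \emph{effective} since $E$ is an arbitrary Cartier divisor, but this is irrelevant because Proposition~\ref{logconcave} only requires bigness, which you correctly verify.
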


\begin{proof}
Take the Galois closure $g:Z \to X$ of $f: Y \to X$ (see Remark \ref{makeGal}); let $G$ and $H$ denote the Galois groups of  $g:Z \to X$ and $h: Z \to Y$, respectively.

We have $\vol_Z(h^* E) = \vol_Z(\sigma h^* E) \foral \sigma \in G$, so using \ref{pfthrupb}, \ref{logconcave} and \ref{volpullback} we get
\[\begin{split}
(\# G \cdot \vol_X(D))^{1/d} &= \vol_Z(g^* D)^{1/d}
= \vol_Z\Biggl(\, \sum_{\bar  \sigma \in G/H} \sigma h^*E \Biggr)^{\!\!1/d}  \\
&\geq \sum_{\bar  \sigma \in G/H} \vol_Z\Bigl(  \sigma h^*E \Bigr)^{1/d}
=  (G:H) \vol_Z\bigl(h^*E\bigr)^{1/d}\\
& = (G:H) \cdot \bigl(\# H \cdot \vol_Y(E)\bigr)^{1/d}.
\end{split}\]
Taking $d$th powers, the inequality follows.

We are left to consider the equality case. Proposition \ref{logconcave} implies that we have equality in \eqref{eqvolpf} if and only if all $\cln{\sigma h^* E}$ are proportional. As all of them have the same image under $g_*$, they have to be equal in this case. This is only true if $g^* D \equiv (G:H) h^* E$ and thus $\# H \cdot f^* D = h_* g^* D \equiv \deg(f) \# H \cdot E$. The converse is obvious.
\end{proof}

\subsection{Applications of the volume}

Even though the last propositions will be useful in the proof of Theorem \ref{dens1}, we still need more refined versions and applications of the theory above. We first investigate an equivariant version of the volume.

\begin{lemma}
Let $Z$ be an integral projective variety over $k$ of dimension $d$, and let $H$ be a finite subgroup of $\Aut_k(Z)$. Let $F$ be an effective big $H$-invariant Cartier divisor on $Z$. Then
\[\lim_{m \to \infty} \frac{\dim_k H^0(Z,\O_{Z}(mF))^H}{m^d /d!} = \frac{\vol(F)}{\# H}.\]
\end{lemma}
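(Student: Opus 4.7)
The plan is to take the quotient $\pi\colon Z\to X:=Z/H$, set $E:=\pi_*F\in\Div(X)$, and identify $H$-invariant sections of $\O_Z(mF)$ with ordinary sections on $X$. Since $F$ is $H$-invariant, Lemma~\ref{pfthrupb} (applied with the trivial subgroup of $H$) gives $\pi^*E=\sum_{\sigma\in H}\sigma F=\#H\cdot F$, so $\O_Z(\#H\cdot F)\cong\pi^*\O_X(E)$ canonically and $H$-equivariantly, with the trivial $H$-action on $\O_X(E)$.

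First I would treat the subsequence $m=\#H\cdot n$. The projection formula together with $(\pi_*\O_Z)^H=\O_X$ (which holds by the very definition of the quotient $X$ as a ringed space) yields
\[
H^0\bigl(Z,\O_Z(mF)\bigr)^H = \bigl(H^0(X,\O_X(nE)\otimes\pi_*\O_Z)\bigr)^H = H^0\bigl(X,\O_X(nE)\bigr).
\]
Lemma~\ref{volpullback} applied to $\pi$ gives $\#H\cdot\vol_X(E)=\vol_Z(\pi^*E)=\vol_Z(\#H\cdot F)=(\#H)^d\vol_Z(F)$, so $\vol_X(E)=(\#H)^{d-1}\vol_Z(F)$; in particular $E$ is big on $X$. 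Hence $h^0(X,nE)=\vol_X(E)n^d/d!+o(n^d)$, and substituting $n=m/\#H$ gives the desired asymptotic along this subsequence.

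To interpolate over general $m=\#H\cdot n+r$ with $0\le r<\#H$: for a lower bound, fix any $H$-linearization of $\O_Z(F)$; then $s_F^{\#H}\in H^0(Z,\O_Z(\#H\cdot F))^H$ is fully $H$-invariant (because the associated character of $H$ is killed by $\#H$), and multiplication by its powers injects $H^0(\O_Z(n\#H\cdot F))^H$ into $H^0(\O_Z(mF))^H$. For an upper bound, the projection formula gives $(\pi_*\O_Z(mF))^H=\O_X(nE)\otimes\Gc_r$, where $\Gc_r:=(\pi_*\O_Z(rF))^H$ is a torsion-free rank-one coherent sheaf on $X$ (only finitely many residues $r$ occur, and the generic rank $1$ comes from the fact that the $H$-invariants of the Galois extension $k(Z)|k(X)$ form a one-dimensional $k(X)$-space). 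Embedding $\Gc_r$ into its reflexive hull $\O_X(D_r)$ on the normal $X$---with cokernel supported in codimension $\geq 2$---bounds $\dim H^0(Z,\O_Z(mF))^H$ above by $h^0(X,\O_X(nE+D_r))$, which by homogeneity and continuity of the volume on $\NeSe{X}_\R$ equals $n^d\vol_X(E)/d!+o(n^d)$, matching the lower bound.

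The main obstacle is the upper bound in the interpolation: one must verify that passing to the reflexive hull $\O_X(D_r)$ and twisting by a bounded divisor $D_r$ contribute only a lower-order $o(m^d)$ term to $h^0$, and that the asymptotic $h^0(X,\O_X(nE+D_r))\sim\vol_X(E)n^d/d!$ holds despite $E$ being only big and not necessarily ample. This requires the big-divisor version of asymptotic Riemann--Roch combined with the continuity of the volume; the generic rank-one identification of $\Gc_r$, which is a Galois-theoretic statement about the trivial isotypic component, is a secondary technical point.
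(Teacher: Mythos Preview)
Your approach is correct and genuinely different from the paper's. The paper does not pass to the quotient $X=Z/H$ at all; instead it works directly with the graded section ring $S=\bigoplus_m H^0(Z,\O_Z(mF))$ and its invariant subring $R=S^H$, invoking a theorem of Symonds on invariants of finitely generated graded algebras to obtain $\dim R_m/\dim S_m\to 1/\#H$, and then exhausting $S$ by finitely generated subalgebras for the general big case (in characteristic zero it simply cites Paoletti and Howe). Your geometric route via the quotient, the projection formula, and Lemma~\ref{volpullback} stays entirely within the paper's own toolkit and avoids both the external invariant-theory input and the exhaustion argument.

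Two small points deserve tightening. First, your reflexive-hull step only produces a Weil divisor $D_r$ on the normal quotient $X$, so invoking the volume of $nE+D_r$ is not immediately licit; it is cleaner to observe that multiplication by $s_F^{\#H-r}$ gives an $H$-equivariant inclusion $\O_Z(rF)\hookrightarrow\pi^*\O_X(E)$, whence $\Gc_r\subset\O_X(E)$ and the upper bound is simply $h^0(X,(n{+}1)E)$. Second, your lower bound as written multiplies by powers of $s_F^{\#H}$, which only moves between multiples of $\#H$; but $s_F$ itself is $H$-invariant under the canonical linearization (it is the constant function $1$ in $K(Z)$), so multiplication by $s_F^{r}$ already gives the injection $H^0(\O_Z(n\#H\cdot F))^H\hookrightarrow H^0(\O_Z(mF))^H$. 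Finally, note that the lemma as stated allows $Z$ to be merely integral, whereas your argument uses normality (for the quotient and for Lemma~\ref{volpullback}); the paper's Symonds-based proof does not need this, though in the applications $Z$ is normal anyway.
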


\begin{proof}
 In characteristic 0, this statement has been proven by Paoletti~(\cite[Theorem 1]{Pao05}); it can also be derived from a result of Howe (\cite{How89}). For arbitrary characteristic, let us first consider the case when
 \[S:=\bigoplus_{m\geq 0} S_m :=\bigoplus_{m\geq 0} H^0(Z,\O_{Z}(mF))\]
 is finitely generated as a $k$-algebra\footnote{This is the case, e.g., if $F_0$ is semiample, see \cite[Example 2.1.30]{Laz04}.}, and set
 \[R := S^H= \bigoplus_{m\geq 0} H^0(Z,\O_{Z}(mF))^H,\]
 $R_m$ being the $m$th degree term. Then by a theorem of Symonds~(\cite[Theorem 1.2]{Sym00}), there exists a free $kH$-submodule $M$ of rank 1, a sum of homogeneous pieces, such that the product map $R \otimes_k M \to S$ is injective and such that
 \[\lim_{m\to \infty}\frac{\dim_k (RM)_m}{\dim_k S_m} = 1,\]
 where $(RM)_m$ denotes the $m$th order homogeneous summand of the subalgebra $RM \subset S$. As $F$ is $H$-invariant and effective, $\dim_k R_m$ is strictly increasing; therefore, this statement implies that \[\lim_{m \to \infty} \frac{\dim_k R_m}{\dim_k S_m}=\frac{1}{\dim_k M}=\frac{1}{\# H},\] and the assertion follows using the definition of the volume.
 In the general case, we exhaust $S$ by finitely generated subalgebras $S^{(1)} \subset S^{(2)} \subset \ldots \subset S$ such that \[\vol(S^{(i)}) \xrightarrow{i \to \infty} \vol(S)=\vol(F),\] where for a graded $k$-algebra $A = \bigoplus_{m \geq 0} A_m$ we set $\vol(A)= \lim_{m \to \infty} \frac{\dim_k A_m}{m^d/d!}$. Our results for the $S^{(i)}$ imply the one for $S$.
\end{proof}

So far, it seems as if the volume only describes the asymptotic behaviour of $h^0(X,mD)$ for large $m$. But in fact, it can be used to describe $h^0(X,D')$ for `large' divisors $D'$ no matter whether they are of the form $mD$ with $m \gg 0$ or not. Before stating this more formally, we have to fix our context.

\begin{cont}\label{ContS}
Let $Z$ be a normal projective variety over $k$ of dimension $d$, $H$ a finite subgroup of $\Aut_k(Z)$. Then $H$ also acts on the N\'{e}ron-Severi space $\NeSe{Z}_\R$; let $\NeSe{Z}_\R^H$ denote the $H$-invariant subspace.

Let $W$ be an affine subspace of $\NeSe{Z}_\R^H$ not containing the origin, $S$ a nonempty compact convex subset of $W$ with $v_{max} := \max \{\vol(s) | s \in S\} > 0$. Furthermore, let $C_S$ be the cone in $\NeSe{Z}_\R^H$ generated by $S$, and define the function $m : C_S \to \R_{\geq 0}$ by $c_s \mapsto$ the unique real number $r$ such that $c_s = r s$ with $s \in S$.
\end{cont}

\begin{prop}\label{Hvol}
In the above context, the following holds: Given any $\varepsilon >0$, there exists a positive integer $M_\varepsilon$ such that for every line bundle $\Lc$ with $\cln{\Lc}\in C_S$ and $m_\Lc := m(\cln{\Lc}) \geq M_\varepsilon$, we have
\[\dim H^0(Z, \Lc)^H < \frac{v_{max}(1+\varepsilon)}{\# H} \frac{m_\Lc^d}{d!}.\]
\end{prop}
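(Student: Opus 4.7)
The plan is to deduce the uniform bound from the pointwise convergence of the preceding lemma via a compactness argument on $S$. First, using continuity of $\vol$ on $\NeSe{Z}_\R$ and compactness of $S$, I would choose a finite collection of rational big $H$-invariant classes $s_1,\dots,s_N \in \NeSe{Z}_\R^H$ (obtained by perturbing points of $S$ slightly in the direction of an $H$-invariant ample class so as to land inside the big cone) such that for every $s\in S$ there exists some $j$ with $s_j - s$ pseudoeffective and $\vol(s_j) \leq v_{max}(1+\varepsilon/4)$. Each $s_j$ can be written as $\cln{F_j}/n_j$ for an effective big $H$-invariant Cartier divisor $F_j$ and a positive integer $n_j$, since any rational big $H$-invariant class admits an effective $H$-invariant representative after multiplication (via $H$-averaging inside a high enough tensor power, whose $H$-invariants are nonzero by the preceding lemma). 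Applying the preceding lemma to each of the finitely many $F_j$ yields a single threshold $K_0$ such that
\[\dim H^0(Z,kF_j)^H \leq (1+\varepsilon/4)\,\vol(F_j)\,\frac{k^d}{\#H\cdot d!}\qquad\text{for all } k\geq K_0 \text{ and every } j.\]

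Next, given $\Lc$ with $\cln{\Lc} = m_\Lc s\in C_S$ and $m_\Lc$ large, choose $j$ with $s\leq s_j$ and set $k = \lceil m_\Lc/n_j\rceil + 1$, so that the $H$-invariant line bundle $\Mc := \O(kF_j)\otimes\Lc^{-1}$ has numerical class
\[k\cln{F_j} - \cln{\Lc} = (kn_j - m_\Lc)s_j + m_\Lc(s_j - s),\]
which is big with volume bounded below uniformly by $n_j^d \vol(s_j)$. Granting that $\Mc$ admits a nonzero $H$-invariant global section, multiplication by it yields an injection $H^0(Z,\Lc)^H \hookrightarrow H^0(Z,kF_j)^H$. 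Combining with the preceding displayed bound, the identity $\vol(F_j) = n_j^d\vol(s_j) \leq n_j^d v_{max}(1+\varepsilon/4)$, and the arithmetic estimate $(kn_j)^d \leq m_\Lc^d(1+\varepsilon/4)$ (valid for $m_\Lc \geq M_\varepsilon$ chosen large compared to $\max_j n_j$), one obtains
\[\dim H^0(Z,\Lc)^H < v_{max}(1+\varepsilon)\,\frac{m_\Lc^d}{\#H\cdot d!}.\]

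The main obstacle is producing the $H$-invariant section of $\Mc$: the preceding lemma supplies $H$-invariants only of large multiples of a fixed divisor, whereas $\Mc$ itself varies with $\Lc$, and two $H$-invariant line bundles with the same numerical class can differ by a nontrivial element of $\Pic^\tau(Z)^H$. I would resolve this by enlarging $F_j$ with a further $H$-invariant ample summand so that $\cln{\Mc}$ lies strictly inside the big cone with volume above a uniform positive threshold determined only by the finite covering, then extract the required uniform existence of an $H$-invariant section by a semi-continuity argument over the proper group scheme $\Pic^\tau(Z)^H$ combined with the preceding lemma applied to suitable big $H$-invariant effective divisors in the interior of that region. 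The boundary case where $s\in S$ lies on the pseudoeffective boundary (so $\vol(s)=0$) is absorbed into the same argument, since the dominance $s\leq s_j$ still gives the injection into $H^0(Z,kF_j)^H$ and the resulting bound is then a fortiori smaller than $v_{max}(1+\varepsilon) m_\Lc^d /(\#H\cdot d!)$.
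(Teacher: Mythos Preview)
Your overall strategy---cover $S$ by finitely many dominating rational classes $s_j$, pick effective $H$-invariant representatives $F_j$, and dominate $H^0(Z,\Lc)^H$ by $H^0(Z,kF_j)^H$---is reasonable and close in spirit to the paper's argument, which also reduces to comparing against a fixed reference divisor. The paper, however, argues by contradiction: it takes a hypothetical violating sequence $\Lc_i$, passes to subsequences, and compares with powers of one fixed $\tilde\Lc$.

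The genuine gap is exactly where you flag it: producing a nonzero $H$-invariant section of $\Mc=\O(kF_j)\otimes\Lc^{-1}$. Your proposed resolution via ``semi-continuity over $\PIC^\tau(Z)^H$'' does not work as stated. The function $\Lc\mapsto\dim H^0(Z,\Lc)^H$ is \emph{upper} semi-continuous, which yields upper bounds, not the nonvanishing you need. And the preceding lemma only controls $H^0(Z,\O(mF))^H$ along the ray of multiples of a \emph{fixed} effective $H$-invariant $F$; it says nothing uniform over all $H$-invariant line bundles in a given numerical class. Two $H$-invariant line bundles with the same big numerical class can genuinely have $H^0(-)^H$ of different dimensions, and one of them can vanish (think of translation actions on abelian varieties acting by a nontrivial character on the one-dimensional space of theta functions). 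So enlarging $F_j$ to push $\cln{\Mc}$ deeper into the big cone does not by itself force $H^0(Z,\Mc)^H\neq 0$.

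The paper resolves this with two ingredients you are missing. First, the norm trick: if $\Lc'$ has any section, then $N_H(\Lc')=\bigotimes_{\sigma\in H}\sigma^*\Lc'$ carries an $H$-invariant section; combined with the uniform effectivity of all line bundles in a sufficiently big numerical class, this shows that every line bundle in a fixed coset of $N_H\Pic^\tau(Z)$ (within that numerical class) carries an $H$-invariant effective divisor. Second, the finiteness of $\Pic(Z)^H/N_H\Pic(Z)$: in the paper's indirect argument this allows passing to a subsequence of the violating $\Lc_i$ lying in a single such coset, so that the differences $\tilde\Lc^{\otimes(1+p_iN)}\otimes\Lc_i^{-1}$ land in the ``good'' coset and hence admit $H$-invariant sections. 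Your direct approach would need to make this uniform over all $\Lc$ simultaneously, which is why the contradiction-and-subsequence framing is more natural here. A smaller point: your justification for the existence of the $F_j$ (``$H$-averaging \ldots whose $H$-invariants are nonzero by the preceding lemma'') is circular, since the preceding lemma already presupposes an effective $H$-invariant divisor; the correct justification is again the norm trick $N_H$ applied to any section of a large multiple.
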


\begin{proof}
The proof is indirect: Assume there exist line bundles $\Lc_i$ with $\cln{\Lc_i} \in C_S$, $m_i = m(\cln{\Lc_i})  \xrightarrow{i \to \infty} \infty$ and
\[\dim H^0(Z, \Lc_i)^H \geq \frac{v_{max}(1+\varepsilon)}{\# H} \frac{m_i^d}{d!}.\]
All of these $\Lc_i$ are necessarily $H$-invariant. By carefully changing the $\Lc_i$ (in effective directions), we will derive a contradiction to the preceding lemma. This will be done in several steps.

\emph{Step 1:} We first change $W$ and $S$. By reducing to an affine subspace if necessary, we can assume that $S$ contains an open subset of $W$; wiggling $W$ and $S = W \cap C_S$ (and decreasing $\varepsilon$) a little bit, we may assume that $W \cap \NeSe{Z}_\Q$ is dense in $W$. Since all $\cln{\Lc_i}$ lie in the pseudoeffective cone, we may replace $S$ and $C_S$ by their intersections with the pseudoeffective cone; therefore, the interior $S^\circ$ of $S$ lies in the big cone.

\emph{Step 2:} Choose a big rational numerical divisor class $\eta' \in S^\circ \cap \NeSe{Z}_\Q$. Then for some integer $m' \gg 0$, all line bundles mapping to $m' \eta'$ are effective (see \cite[Lemma 2.2.42]{Laz04}). Replacing $m'$ by $m' \# H$, we can find a line bundle $\Lc'$ with $\cln{\Lc'} = m' \eta'$ that carries an effective $H$-invariant Cartier divisor; in fact every line bundle in $\Lc' N_H (\Pic^\tau(Z))$ does.

The sequence $\frac{1}{m_i}\cln{\Lc_i}$ has an accumulation point in $S$. After restricting to a subsequence, replacing $\varepsilon$ by a smaller positive number $\varepsilon'$ and the $\Lc_i$ by $\Lc'_i = \Lc_i \otimes \Lc'^{\otimes \lfloor\frac{m_i}{m'}\alpha\rfloor}$ with $1 < 1+ \alpha < \sqrt[d]{\frac{1+\varepsilon}{1+\varepsilon'}}$, we can and will assume that the  $\frac{1}{m_i}\cln{\Lc_i}$ converge to a point $\eta$ in $S^\circ$.

\emph{Step 3:} We choose linear independent big rational divisor classes $\beta_0, \ldots, \beta_r$ in $S^\circ \cap \NeSe{Z}_\Q$ with $r=\dim W$ such that $\eta$ lies in the interior of the simplex $S_\beta$ with vertices $\beta_j$. After replacing $S$, $W$ and the $\beta_i$ by large enough multiples, we can assume that for all $j= 0, \ldots r$, there exists an effective $H$-invariant Cartier divisor $B_j$ with $\cln{B_j}=\beta_j$ (using the same argument as for the $\Lc'$ above).

After throwing away finitely many $\Lc_i$, we can assume that all $\frac{1}{m_i}\cln{\Lc_i}$ lie in $S_\beta$, i.e.\ for every $i > 0$, we have \[\frac{1}{m_i}\cln{\Lc_i} = \sum_{j=0}^r a_{ij} \beta_j \quad \text{for some } a_{ij} \in \Q_{\geq 0} \text { with } \sum_{j=0}^r a_{ij}=1.\]
Since $\eta$ lies in the interior of $S_\beta$, we can assume that $a_{ij} \geq \delta$ for all $i > 0$, $j = 0, \ldots, r$ and some fixed $\delta > 0$. Furthermore, since the $\frac{1}{m_i}\cln{\Lc_i}$ form a Cauchy sequence, we may assume that $|a_{ij} - a_{i'\!j}| < \delta^2$ for all $i, i' > 0$, $j=0, \ldots r$.

\emph{Step 4:} Let $V \subset \NeSe{Z}_\R^H$ be the linear subspace spanned by $W$. Then $\NeSe{Z} \cap V$ is a lattice $\Lambda$ in $V$ that contains $\Lambda' := \bigoplus_j \Z \beta_j$ as a sublattice of finite index. Therefore, after possibly changing to a subsequence, we can assume that all $\cln{\Lc_i}$ lie in the same class of $\Lambda/\Lambda'$. In other words, for any two line bundles $\Lc_i$, $\Lc_{i'}$ in this sequence, we have $m_i a_{ij}- m_{i'} a_{i'\!j} \in \Z$ for all $j=0, \ldots, r$.

Also, using the fact that $\Pic(Z)^H/N_H \Pic(Z)$ is finite, we can restrict to a subsequence one last time and assume that all the $\Lc_i$ lie in the same class modulo $N_H \Pic(Z)$.

\emph{Step 5}: Finally, we are able to put all pieces together. Choose $\tilde \Lc$ to be one of the $\Lc_i$, define $\tilde m, \tilde a_0, \ldots, \tilde a_r \in \Q$ by
\[\frac{1}{\tilde m}\cln{\tilde\Lc} = \sum_{j=0}^r \tilde a_j \beta_j \in S_\beta\]
and fix $N:= \# H (\Lambda:\!~\!\Lambda')$. For any integer $p_i$ with
\[(1+p_i N) \tilde m \tilde a_j > m_i a_{ij} \quad \foral j=0, \ldots, r,\]
the sheaf $\tilde \Mc_i = \tilde\Lc^{\otimes (1+p_i N)}\!\otimes\!\Lc_i^\vee$ is numerically equivalent to $\O_Z(\sum_{j=1}^r b_{ij} B_j)$ for some nonnegative integers $b_{ij}$; furthermore, these two line bundles differ by an element of $N_H \Pic^\tau(Z)$. Looking back at our remarks concerning the construction of $\Lc'$ in Step 2, we can even assume that $\tilde \Mc_i = \O_Z(\sum_{j=0}^r b_{ij} B_j)$, hence $H^0(Z, \tilde \Mc_i)^H \neq 0$. In particular,
\[\dim H^0\bigl(Z, \tilde\Lc^{\otimes (1+p_i N)}\bigr)^H \geq \dim H^0(Z, \Lc_i)^H \geq \frac{v_{max}(1+\varepsilon)}{\# H} \frac{m_i^d}{d!}.\]
For every $i$, we choose the minimal such $p_i$, i.e.\
\[p_i = 1+ \max \biggl\{ \biggl\lfloor \frac{m_i a_{ij}-\tilde m \tilde a_j}{N \tilde m \tilde a_j}\biggr\rfloor\ \biggl|\ j=1, \ldots, r\biggr\}.\]
Since $a_{ij}\leq \tilde a_j+\delta^2 \leq \tilde a_j(1+\delta)$, we have

\[1+p_i N \leq N+ \max \Bigl\{ \frac{m_i a_{ij}}{\tilde m \tilde a_j} \Bigr| j=0, \ldots, r \Bigr\} \leq N + \frac{m_i}{\tilde m}(1+\delta) \leq \frac{m_i}{\tilde m}(1+2\delta) \quad\text{ for } i \gg 0,\]
hence
\[\dim H^0(Z,  \tilde\Lc^{\otimes (1\!+\!p_i N)}\!)^H \geq \frac{v_{max}(1\!+\!\varepsilon)}{\# H \cdot d!} {\left(\frac{(1\!+\!p_i N) \tilde m}{1\!+\!2\delta}\right)\!\!}^{d} = \frac{v_{max}(1\!+\!\varepsilon)\tilde m^d}{\# H (1\!+\!2\delta)^d} \frac{(1\!+\!p_i N)^d}{d!}.\]
Choosing $\delta > 0$ small enough and using the lemma above, this implies
\[\vol(\tilde \Lc) \geq \frac{v_{max}(1+\varepsilon)\tilde m^d}{(1+2\delta)^d} > v_{max} \tilde m^d.\]
But since $\frac{1}{\tilde m}\cln{\tilde \Lc} \in S$, $\vol(\tilde \Lc) = \tilde m^d \vol(\frac{1}{\tilde m}\cln{\tilde \Lc}) \leq v_{max} \tilde m^d$, contradiction.
\end{proof}

\begin{rem}\label{Hvolrem}
To give one of the main examples of Context \ref{ContS}, fix an ample class $\eta \in \NeSe{Z}^H_\R$ and define $\deg = \deg_\eta : \NeSe{X}_\R \to \R$ by $\deg(\delta) = \eta^{d-1}\cdot \delta$.

Then we can set $S = \{\delta \in \overline\Eff(Z)\cap \NeSe{X}^H_\R \ | \ \deg(\delta)=1\}$. In this case, we have $C_S = \overline\Eff(Z) \cap \NeSe{X}^H_\R, m = \deg_{|C_S}$ and $v_{max} = \vol(\eta)^{1-d}$. This follows from the following lemma:
\end{rem}

\begin{lemma}\label{degree}
Let $Z$ be a normal projective variety of dimension $d \leq 2$, let $\eta$ be an ample class in $\NeSe{Z}_\R$ and $\deg=\deg_\eta$. Then the following holds:
\begin{enumerate}[(a)]
\item For every $\delta \in \overline\Eff(Z)$, $\delta \neq 0$, we have $\deg(\delta) > 0$.
\item The set $S = \{\delta \in \overline\Eff(Z) | \deg(\delta) = 1\}$ is compact.
\item For a big class $\delta$, we have $\vol(\delta) \leq \frac{\deg(\delta)^{d}}{\vol(\eta)^{d-1}}$, with equality if and only if $\delta$ is proportional to $\eta$.
\end{enumerate}
\end{lemma}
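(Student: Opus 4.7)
The plan is to prove the three parts in the order (c), (a), (b), since (a) can be deduced from (c) together with the log-concavity of Proposition \ref{logconcave}, and (b) is then a routine consequence of (a).

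For (c), I would first treat the case of an ample divisor $\delta$: the Hodge index inequality \eqref{HIT}, applied to the tuple $(\delta, \eta, \ldots, \eta)$, reads $(\delta \cdot \eta^{d-1})^d \geq \delta^d \cdot (\eta^d)^{d-1}$, and since $\vol(\delta) = \delta^d$ and $\vol(\eta) = \eta^d$ by Proposition \ref{asyRR}, this is exactly $\vol(\delta) \cdot \vol(\eta)^{d-1} \leq \deg(\delta)^d$. The equality case of \eqref{HIT} forces $\delta$ and $\eta$ to be numerically proportional. For a general big $\delta$ I would extend via Fujita's approximation theorem (already invoked in the proof of Proposition \ref{logconcave}) combined with the continuity of $\vol$ and $\deg$ on $\NeSe{Z}_\R$; the strictness in the non-proportional big case can then be secured by a perturbation argument modeled on the proof of the equality case of Proposition \ref{logconcave}.

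For (a), first note that for a nonzero effective Cartier divisor $D = \sum n_i D_i$ (with $n_i > 0$ and the $D_i$ prime), $\deg(D) = \sum n_i (\eta|_{D_i})^{d-1} > 0$, since $\eta|_{D_i}$ is ample on the integral $(d-1)$-dimensional variety $D_i$. To upgrade this to an arbitrary nonzero $\delta \in \overline\Eff(Z)$, fix $t > 0$ and note that $\delta + t\eta$ is big. Applying (c) to $\delta + t\eta$ and log-concavity (Proposition \ref{logconcave}, extended from big pairs to pseudoeffective pairs by continuity of $\vol^{1/d}$), one obtains
\[ t^d \vol(\eta) \;\leq\; \vol(\delta + t\eta) \;\leq\; \frac{(\deg(\delta) + t\vol(\eta))^d}{\vol(\eta)^{d-1}}, \]
which forces $\deg(\delta) \geq 0$. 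If $\deg(\delta) = 0$, the entire chain collapses to equality for every $t > 0$, and the equality case of (c) then yields $\delta + t\eta \propto \eta$, so $\delta = c\eta$ with $c \geq 0$; but then $0 = \deg(\delta) = c\vol(\eta)$ gives $c = 0$ and $\delta = 0$, contradicting the assumption.

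Part (b) is then immediate from (a): $S$ is closed as the intersection of two closed sets, and if a sequence $(\delta_n) \subset S$ had $\|\delta_n\| \to \infty$ in any fixed norm on $\NeSe{Z}_\R$, then a subsequential limit $\delta^*$ of $\delta_n/\|\delta_n\|$ would be a nonzero element of $\overline\Eff(Z)$ with $\deg(\delta^*) = \lim 1/\|\delta_n\| = 0$, contradicting (a). The main obstacle throughout is the equality case of (c) for big but not necessarily ample classes; once this delicate point is secured via the extension scheme already used for Proposition \ref{logconcave}, the implications (c) $\Rightarrow$ (a) $\Rightarrow$ (b) are essentially formal.
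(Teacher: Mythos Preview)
Your proof is correct, and parts (b) and (c) match the paper's approach essentially verbatim. Where you diverge is part (a): the paper argues directly, first obtaining $\deg(\delta) \geq 0$ on $\overline\Eff(Z)$ from Nakai--Moishezon plus continuity, then handling the case $\deg(\delta) = 0$ by noting that $t \mapsto (\eta + t\delta)^{d-1}\cdot\delta$ is non-negative for $|t|$ small (since $\eta + t\delta$ stays ample and $\delta$ is pseudoeffective) and vanishes at $t=0$, whence its derivative $(d-1)\,\eta^{d-2}\cdot\delta^2$ also vanishes; Kleiman's numerical triviality criterion \cite[9.6.3]{Kle05} then forces $\delta = 0$. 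Your route---deducing (a) from (c) via the sandwich $t^d\vol(\eta) \leq \vol(\delta + t\eta) \leq (\deg(\delta) + t\vol(\eta))^d/\vol(\eta)^{d-1}$ and the equality case of (c)---is more self-contained in that it avoids the external appeal to Kleiman, at the cost of making (a) logically downstream of the full strength of (c). One small streamlining: in your equality step you invoke the equality clause of (c) for the merely big class $\delta + t\eta$, but since $\eta$ is ample and the ample cone is open, $\delta + t\eta$ is in fact ample for all sufficiently large $t$, so the ample equality case of (c) already suffices and no delicate big-class perturbation is needed here. Also, your opening paragraph on effective divisors in (a) is correct but is not actually used by the volume-sandwich argument that follows.
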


\begin{proof}
\begin{enumerate}[(a)]
\item For effective classes, this is the easy direction of the Nakai-Moishezon criterion (\cite[Theorem 1.2.23]{Laz04}); by linearity and continuity, this implies that $\deg$ is non-negative on $\overline\Eff(Z)$. Assume that for some $\delta \in \overline\Eff(Z)$, we have $\deg(\delta)= \eta^{d-1} \cdot \delta = 0$. Then for some $\varepsilon >0$,  $\eta + t \delta$ is ample for all $|t| < \varepsilon$. Thus the function $(\eta + t\delta)^{d-1} \cdot \delta$ has a local minimum at $t=0$, hence $\eta^{d-2} \cdot \delta^2 =0$.
This implies $\delta=0$ by \cite[9.6.3 (h)$\Rightarrow$(b)]{Kle05}.
\item Assume the converse: Let $(\delta_i)_{i \in \N}$ be a sequence in $S$ with $|\delta_i| \xrightarrow{i\to \infty} \infty$ for some norm $|\cdot|$ on $\NeSe{Z}_\R$. The sequence $\left(\frac{1}{|\delta_i|} \, \delta_i\right)_{i \in \N}$ lies on the (compact) unit sphere, so has an accumulation point $\tilde \delta$, which lies in $\overline\Eff(Z)$ since the latter is closed and convex. But on the other hand, we have $\tilde\delta \neq 0$ and $\deg(\tilde\delta)=0$ by construction, in contradiction to (a).
\item For ample classes $\delta$, this follows from \eqref{HIT} in the proof of Proposition \ref{logconcave}; the extension to big classes is done in the same way as in the proof of Proposition \ref{logconcave}.\qedhere
\end{enumerate}
\end{proof}

\begin{cor}\label{psef}
The pseudoeffective cone of a normal projective variety $Z$ has a compact basis, i.e.\ for $\delta \in \overline\Eff(Z)$, the set $\{\delta' \in \overline\Eff(Z)  \ | \ \delta - \delta' \in \overline\Eff(Z)\}$ is compact.
\end{cor}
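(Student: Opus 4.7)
The plan is to show that the set $B_\delta := \{\delta' \in \overline\Eff(Z) \mid \delta - \delta' \in \overline\Eff(Z)\}$ is both closed and bounded in the finite-dimensional real vector space $\NeSe{Z}_\R$. Closedness is immediate: $B_\delta$ is the intersection of $\overline\Eff(Z)$ and its translate $\delta - \overline\Eff(Z)$, both of which are closed subsets of $\NeSe{Z}_\R$.

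For boundedness I would exploit the degree function attached to an ample class. Fix any ample class $\eta \in \NeSe{Z}_\R$ and set $\deg = \deg_\eta$. By Lemma \ref{degree}(a), $\deg$ is strictly positive on $\overline\Eff(Z) \setminus \{0\}$, in particular non-negative on the whole pseudoeffective cone. Hence for every $\delta' \in B_\delta$, both $\delta'$ and $\delta-\delta'$ are pseudoeffective, giving
\[ 0 \leq \deg(\delta') \leq \deg(\delta). \]

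Now I would use the compact cross-section provided by Lemma \ref{degree}(b): the set $S = \{\delta'' \in \overline\Eff(Z) \mid \deg(\delta'') = 1\}$ is compact. Since $\deg$ is strictly positive on $\overline\Eff(Z) \setminus \{0\}$, every nonzero pseudoeffective class $\delta'$ can be written uniquely as $\deg(\delta')\cdot s$ with $s \in S$, so the scaling map $[0,\deg(\delta)] \times S \to \NeSe{Z}_\R,\ (t,s) \mapsto ts$ has image containing every element of $B_\delta \setminus \{0\}$ (and clearly contains $0$ as well). As a continuous image of a compact set, this image is compact, and $B_\delta$ is a closed subset of it, hence compact.

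The only real obstacle is that Lemma \ref{degree} is the substantive input — it packages both the non-degeneracy of $\deg$ on $\overline\Eff(Z)$ and the compactness of the degree-one slice. Once those two facts are in hand, the corollary is a purely formal separation-of-scales argument combining a bound on the degree with compactness of the cross-section.
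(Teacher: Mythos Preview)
Your argument is correct and follows essentially the same route as the paper: the paper's proof simply observes that $B_\delta$ is closed and is contained in the compact set $\{\delta' \in \overline\Eff(Z) \mid \deg_\eta(\delta') \leq \deg_\eta(\delta)\}$, whose compactness is exactly what you unpack via the scaling map from $[0,\deg(\delta)]\times S$. You have spelled out in detail what the paper leaves implicit, but the key inputs (Lemma~\ref{degree}(a) for the degree bound and Lemma~\ref{degree}(b) for the compact slice) and the overall structure are the same.
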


\begin{proof}
For any ample $\eta \in \NeSe{X}_\R$, the closed set $\{\delta' \in \overline\Eff(Z)  \ | \ \delta - \delta' \in \overline\Eff(Z)\}$ is contained in the compact set $\{\delta' \in \overline\Eff(Z)  \ | \ \deg_\eta(\delta') \leq \deg_\eta(\delta)\}$ and hence compact itself.
\end{proof}

\begin{rem}
For smooth varieties, an alternative proof is given in \cite[Proposition 1.3]{BFJ09}.
\end{rem}

\begin{cor}\label{psefcor}
Let $Z$ be a normal projective variety, $\eta \in \NeSe{Z}_\R$ be an ample class. Then for any $C > 0$, there are only finitely many classes $\delta \in \overline\Eff(Z) \cap \NeSe{Z}$ with $\deg_\eta(\delta) \leq C$.
\end{cor}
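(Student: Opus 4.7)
The plan is to reduce to the statement that a compact subset of a finite-dimensional real vector space contains only finitely many lattice points. First I would observe that the set in question lies inside the larger set
\[T_C := \{\delta \in \overline\Eff(Z) \mid \deg_\eta(\delta) \leq C\} \subset \NeSe{Z}_\R,\]
and that by Lemma \ref{degree}\,a) the only element of $T_C$ with $\deg_\eta = 0$ is the origin. Hence $T_C = \{0\} \cup \{r s \mid 0 < r \leq C,\ s \in S\}$, where $S = \{\delta \in \overline\Eff(Z) \mid \deg_\eta(\delta) = 1\}$ is the compact set from Lemma \ref{degree}\,b).

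Next I would conclude that $T_C$ is compact, since it is the image of the compact set $[0,C] \times S$ under the continuous scaling map $(r,s) \mapsto r s$ (using $T_C \cup \{0\} = T_C$).

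Finally, by the Néron--Severi theorem recalled in the Notation paragraph, $\NeSe{Z}$ sits as a (finitely generated free) lattice inside the finite-dimensional real vector space $\NeSe{Z}_\R$, so it is a discrete subgroup. The intersection of the compact set $T_C$ with the discrete set $\NeSe{Z}$ is therefore finite, which is exactly the claim.

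There is no real obstacle here beyond invoking the already established compactness of $S$; the only point requiring a line of care is checking that the cone-over-$S$ description of $T_C$ is genuinely compact (which is immediate from continuity of scalar multiplication on a finite-dimensional normed space), and noting that $\NeSe{Z} \subset \NeSe{Z}_\R$ is discrete, both of which are standard.
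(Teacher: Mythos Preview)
Your argument is correct and is exactly the intended one: the paper leaves Corollary~\ref{psefcor} without proof, but in the proof of the preceding Corollary~\ref{psef} it already asserts that the set $\{\delta' \in \overline\Eff(Z) \mid \deg_\eta(\delta') \leq C\}$ is compact, and your lattice/discreteness step is the obvious way to finish. Your extra line justifying compactness of $T_C$ via the scaling map $[0,C]\times S \to \NeSe{Z}_\R$ is a welcome detail the paper omits.
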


\section{Asymptotics of the density}\label{secasy}

In the situation of Theorem \ref{dens1}, we have proved the existence of the schemes $\mf P_D$ and $\mf D^\Cc_D$. We now want to consider the asymptotics of $\dim \mf P_{mD_0}$ and $\dim \mf D^\Cc_{mD_0}$.

\subsection{Asymptotic behaviour of $\dim \mf P_{mD_0}$}

For any Cartier divisor $D$ on $X$, $\mf{P}_D$ is an open subscheme of $\mf{L}_D$, so its dimension is $h^0(X,D)-1$ unless it is empty.

\begin{prop}[Bertini's theorem]\label{bertini}
Assume $\dim X > 1$. Let $D$ be a very ample Cartier divisor on $X$. Then $\mf{P}_D$ is nonempty.
\end{prop}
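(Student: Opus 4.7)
The plan is to reduce the statement to the classical Bertini theorem by base-changing to an algebraic closure $\bar k$ of $k$. Since $\mf{P}_D$ is a $k$-scheme of finite type, it is nonempty if and only if its base change to $\bar k$ admits a $\bar k$-rational point. By the functorial description of $\mf{P}_D$, such a point corresponds to an effective Cartier divisor on $X_{\bar k}$ that is linearly equivalent to $D_{\bar k}$ and geometrically integral; over the algebraically closed field $\bar k$, geometric integrality is equivalent to integrality. Since $X$ is geometrically integral and normal, $X_{\bar k}$ is itself integral and normal (cf.\ the reference to \cite[Proposition 6.7.7]{Gro65} already used in Section~\ref{secpfpb}), and $D_{\bar k}$ is still very ample. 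The assertion therefore reduces to the following purely geometric claim: for a normal integral projective variety $\bar X$ of dimension $d \geq 2$ over an algebraically closed field and a very ample Cartier divisor $\bar D$ on $\bar X$, the complete linear system $|\bar D|$ contains an integral member.

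To prove this claim, embed $\bar X \hookrightarrow \mb P^N_{\bar k}$ via $|\bar D|$, so that elements of $|\bar D|$ correspond to hyperplane sections of $\bar X$. Two classical Bertini-type results apply. First, because $\bar X$ is normal of dimension $d \geq 2$, a general hyperplane section $\bar X \cap H$ is again normal; this is Seidenberg's theorem, valid in arbitrary characteristic. Second, because $\bar X$ is a connected projective variety of dimension $\geq 2$ embedded by a very ample linear system, the Bertini--Grothendieck connectedness theorem guarantees that a general hyperplane section is connected. A connected normal noetherian scheme is integral, so a general member of $|\bar D|$ is integral, producing the desired $\bar k$-point of $\mf{P}_D$.

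The only subtlety is a characteristic-related one: the familiar smoothness version of Bertini, as in \cite[Corollary III.10.9]{Har77}, can fail in positive characteristic, so one must appeal to the normality and connectedness versions separately. Both of these hold in arbitrary characteristic, so the argument goes through uniformly over any perfect base field $k$, and no further input beyond the classical Bertini package and the base-change reduction is required.
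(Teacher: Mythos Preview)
Your proof is correct and follows essentially the same line as the paper's: reduce to $\bar k$ and then invoke classical Bertini to find an integral member of $|D_{\bar k}|$. The only minor difference is in the flavor of Bertini cited: the paper appeals directly to the irreducibility and reducedness of the generic member (\cite[Theorem 3.4.10 and Corollary 3.4.14]{FOV99}), whereas you obtain integrality via Seidenberg's normality theorem together with connectedness; both routes are standard and work in arbitrary characteristic.
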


\begin{proof}
It is enough to prove that $\mf{P}_D(\bar k)$ is nonempty. But this corresponds to the set of integral divisors in the linear system $|D_{\bar k}|$. By Bertini's theorem, the generic member of $|D_{\bar k}|$ is irreducible (\cite[Theorem 3.4.10]{FOV99}) and reduced (\cite[Corollary 3.4.14]{FOV99}), hence integral.
\end{proof}

\begin{cor}\label{bertinicor}
Assume $\dim X \geq 2$, and let $D$ be an ample divisor and $D'$ an arbitrary divisor on $X$. Then \[\dim \mf P_{mD+D'} = \frac{\vol(D)}{d!}\cdot m^d + O(m^{d-1}).\]
\end{cor}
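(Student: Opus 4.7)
The plan is to combine Bertini's theorem (Proposition~\ref{bertini}) with the asymptotic Riemann--Roch formula (Proposition~\ref{asyRR}); the statement essentially follows by unpacking the two.

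First I would observe that, since $\mf P_{mD+D'}$ is by construction an open subscheme of $\mf L_{mD+D'}$, whenever $\mf P_{mD+D'}$ is nonempty its dimension is $h^0(X, mD+D') - 1$. So the whole statement reduces to (i) showing $\mf P_{mD+D'}\neq\emptyset$ for $m\gg 0$, and (ii) controlling $h^0(X,mD+D')$ asymptotically. For (i), $D$ is ample, hence $mD+D'$ is very ample for all $m\gg 0$ (this is a standard consequence of ampleness: some multiple $m_0 D$ is very ample, and adding any coherent divisor to a sufficiently large multiple of a very ample divisor stays very ample). Proposition~\ref{bertini} then yields $\mf P_{mD+D'}\neq\emptyset$ for such $m$, since $\dim X\geq 2$.

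For (ii), I would apply Proposition~\ref{asyRR} to the coherent sheaf $\Fc = \O_X(D')$, which has rank $1$. Since $D$ is ample, this gives
\[
h^0\bigl(X,\O_X(D')\otimes\O_X(mD)\bigr) \;=\; \frac{D^d}{d!}\,m^d + O(m^{d-1}).
\]
Combined with $\vol(D) = D^d$ (again by Proposition~\ref{asyRR}), and absorbing the $-1$ from passing from $h^0$ to $\dim \mf P_{mD+D'}$ into the $O(m^{d-1})$ error, this yields the desired asymptotic.

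There is no real obstacle here; the only mild subtlety is ensuring $\mf P_{mD+D'}$ is nonempty for \emph{all} $m \gg 0$ rather than just for infinitely many $m$, but this is immediate from the eventual very ampleness of $mD+D'$ together with Bertini.
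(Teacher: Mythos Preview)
Your proof is correct and follows essentially the same approach as the paper: use eventual very ampleness of $mD+D'$ together with Bertini to ensure $\mf P_{mD+D'}$ is nonempty for $m\gg 0$, then apply asymptotic Riemann--Roch to compute $h^0(X,mD+D')$. The only extra detail you spell out is the choice $\Fc=\O_X(D')$ in Proposition~\ref{asyRR}, which the paper leaves implicit.
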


\begin{proof}
For $m \gg 0$, $mD+D'$ is very ample, so $\dim \mf P_{mD+D'}=h^0(X,mD+D')-1$ by Proposition \ref{bertini} and the remarks before. Proposition \ref{asyRR} then implies our claim.
\end{proof}

\subsection{Asymptotic behaviour of $\dim \mf D^\Cc_{mD_0}$}\label{secupbd}
From our proof for the existence of $\mf D^\mc{C}_D$ in Corollary \ref{existD}, it is clear that describing the asymptotic behaviour of $\dim \mf D^\mc{C}_{mD_0}$ will come down to describing the asymptotic behaviour of $\dim \mf S^{Z/H}_{mD_0}$, with $H$ any representative of $\mc{C}$.

As before, we fix such an $H$, set $Y = Z/H$ and denote the canonical morphism $Y \to X$ by $f$. The following two lemmata will give upper and lower asymptotic bounds on $\dim \mf S^Y_{mD_0}$, which finally enable us to finish the proof of Theorem \ref{dens1}.

\begin{lemma}\label{upperbound}
In the above context, fix a divisor $D_0$ on $X$. Then
\[\limsup_{m\to\infty} \frac{\dim \mf S^Y_{mD_0}}{m^d/d!} \leq \frac{\vol(D_0)}{(\deg f)^{d-1}}.\]
\end{lemma}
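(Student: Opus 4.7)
The plan is to bound $\dim\mf S^Y_{mD_0}$ by the dimension of a scheme of $H$-invariant effective divisors on $Z'$ of approximately fixed degree, and then apply the equivariant volume estimate of Proposition~\ref{Hvol}.

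First, by the proof of Proposition/Definition~\ref{existS}, $\mf S^Y_{mD_0}$ is the reduced union of finitely many subschemes $W_{\underline d}=(\varphi_{\underline d}\circ{g'}^*)^{-1}(\im\psi)$ indexed by $\underline d\in\{0,\ldots,(G:H)-1\}^r$, so it suffices to bound $\dim W_{\underline d}$ for each $\underline d$. A geometric point of $W_{\underline d}$ is a $D'\in\mf P'_{mD_0}$ with $\psi(F')={g'}^* D'+\sum_i d_i E'_i$ for some $F'\in\DIV^H_{Z'/k}$. Since ${g'}^*$ is a monomorphism on (schemes of) Cartier divisors (as $g'$ is dominant) and $\psi$ is finite, forming the fiber product of $W_{\underline d}$ and $\DIV^H_{Z'/k}$ over $\DIV_{Z'/k}$ yields a scheme finite over $W_{\underline d}$ via the first projection and quasifinite over
\[\mc F_{\underline d}:=\bigl\{F'\in\DIV^H_{Z'/k}\bigm|\psi(F')\sim{g'}^*(mD_0)+\textstyle\sum_i d_i E'_i\bigr\}\]
via the second; hence $\dim W_{\underline d}\leq\dim\mc F_{\underline d}$.

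Next, take $\eta_{Z'}:={g'}^*D_0+\varepsilon\tilde\eta$, where $\tilde\eta:=\sum_{\sigma\in G}\sigma\tilde\eta_0$ for an ample $\tilde\eta_0$ on $Z'$ and $\varepsilon>0$ is small. Then $\eta_{Z'}$ is $G$-invariant and ample (a sum of nef-and-big and ample). Intersecting the defining relation $\sum_\sigma\sigma F'\sim{g'}^*(mD_0)+\sum_i d_i E'_i$ with $\eta_{Z'}^{d-1}$, using $G$-invariance of $\eta_{Z'}$, the projection formula $({g'}^*D_0)^d=\#G\cdot\vol(D_0)$, and ${g'}_*E'_i=0$, every $F'\in\mc F_{\underline d}$ satisfies $\deg_{\eta_{Z'}}(F')=\#H\cdot m\cdot\vol(D_0)\cdot(1+O(\varepsilon))+O(1)$; similarly $\vol(\eta_{Z'})=\#G\cdot\vol(D_0)\cdot(1+O(\varepsilon))$ by Lemma~\ref{volpullback} and continuity of $\vol$. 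Effective $F'$ have pseudoeffective class, and Corollary~\ref{psefcor} restricts the allowed numerical classes $\cln{F'}\in\NeSe{Z'}$ to a finite set. For each such class, the $H$-invariant effective divisors form a locally closed subscheme of dimension at most $\dim\Pic^\tau(Z')+\max_\Lc\dim H^0(Z',\Lc)^H-1$, the max taken over a finite collection of $H$-linearized line bundles with the given underlying class (a torsor under $\Hom(H,k^*)$). Hence $\dim\mc F_{\underline d}\leq\max_\Lc\dim H^0(Z',\Lc)^H+O(1)$.

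Finally, applying Proposition~\ref{Hvol} in the setting of Remark~\ref{Hvolrem} (for which $v_{max}=\vol(\eta_{Z'})^{1-d}$): for any $\varepsilon'>0$ and $m\gg 0$,
\[\dim H^0(Z',\Lc)^H<\frac{1+\varepsilon'}{\#H\cdot\vol(\eta_{Z'})^{d-1}}\cdot\frac{\deg_{\eta_{Z'}}(\Lc)^d}{d!}.\]
Plugging in the expressions from Step~2 and simplifying via $(\#H)^{d-1}/(\#G)^{d-1}=(\deg f)^{1-d}$ and $\vol(D_0)^d/\vol(D_0)^{d-1}=\vol(D_0)$, the right-hand side equals
\[\bigl(1+O(\varepsilon)+O(\varepsilon')\bigr)\cdot\frac{\vol(D_0)}{(\deg f)^{d-1}}\cdot\frac{m^d}{d!}+o(m^d).\]
Dividing by $m^d/d!$, taking $\limsup_{m\to\infty}$, and letting $\varepsilon,\varepsilon'\to 0$ yields the desired bound. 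The principal technical obstacle is that ${g'}^*D_0$ is only nef and big (not ample) on $Z'$, so Proposition~\ref{Hvol} cannot be applied to it directly; we circumvent this with the ample perturbation $\eta_{Z'}={g'}^*D_0+\varepsilon\tilde\eta$ and must arrange the limits $m\to\infty$ and $\varepsilon\to 0$ in this order, using continuity of $\vol$ and the fact that the $O(1)$ contributions from $\dim\Pic^\tau$, the finite set of $H$-linearizations, and the finitely many $\underline d$ become negligible relative to $m^d$.
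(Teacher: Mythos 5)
Your argument follows essentially the same route as the paper: reduce $\dim \mf S^Y_{mD_0}$ to the dimensions of spaces of $H$-invariant effective divisors on $Z'$ with constrained numerical class, split each such dimension via the Abel map into a $\dim\PIC^\tau$ contribution plus $\dim H^0(\cdot)^H - 1$, and then invoke Proposition~\ref{Hvol} in the spirit of Remark~\ref{Hvolrem} to obtain the asymptotic bound. Where you add genuine value is the treatment of the reference class used in the degree function: the paper works with $\eta' = \cln{{g'}^* D_0}$, which on the resolution $Z'$ is nef and big but \emph{not} ample (pullback of an ample class under the birational morphism $\pi$ degenerates on exceptional divisors, as the vanishing $\deg_{\eta'}(E'_i)=0$ makes plain), whereas Remark~\ref{Hvolrem} and the compactness in Lemma~\ref{degree}(b) are stated for an ample class. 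Your perturbation $\eta_{Z'}={g'}^*D_0+\varepsilon\tilde\eta$ with $\tilde\eta$ a $G$-invariant ample class, combined with taking $m\to\infty$ before $\varepsilon\to 0$ and using continuity of $\vol$, is a clean way to make the application of Proposition~\ref{Hvol} legitimate, and the bookkeeping you carry out afterwards (the $(1+O(\varepsilon))$-factors, $\deg_{\eta_{Z'}}(F')=\#H\cdot m\cdot\vol(D_0)(1+O(\varepsilon))+O(1)$, $v_{\max}=\vol(\eta_{Z'})^{1-d}$) is correct and does recover the stated constant. One small imprecision: the maximum in your dimension bound should run over \emph{all} line bundles $\Lc$ with $\cln{\Lc}=\beta$, not a ``finite collection of $H$-linearized line bundles'' -- the $H$-invariant bundles with a fixed numerical class form a torsor under $\Pic^\tau(Z')^H$, which can be positive-dimensional; but since Proposition~\ref{Hvol} bounds $\dim H^0(Z',\Lc)^H$ uniformly over all such $\Lc$, this has no effect on the estimate. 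Also, the reduction $\dim W_{\underline d}\le\dim\mc F_{\underline d}$ via the fiber product is fine (in fact the second projection is a monomorphism, not merely quasifinite, since ${g'}^*$ determines $D'$ from $F'$ and $\underline d$), matching the paper's use of finiteness of $\psi$ in equation \eqref{equp1}.
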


\begin{proof}
We can assume $k$ to be algebraically closed. Looking back at the proof of Proposition \ref{existS} and using its notation, we see that for any Cartier divisor $D$ on $X$, $\dim \mf S^Y_{D}$ is bounded by the maximum of the dimensions of the schemes $\im \psi \cap \mf L_{{g'}^*D+\sum_i d_i E'_i}$. Using that $\psi$ is finite, we get
\[\dim \mf S^Y_{D} \leq \max \biggl\{ \dim \Bigl(\DIV_{Z'/k}^\beta\Bigr)^{\!H} \biggr|\,\begin{array}{c}\beta \in \NeSe{Z'}^H\!,\, \psi(\beta) = \cln{{g'}^{\!*}D+\textstyle\sum_i d_i E'_i} \\ \text{with } 0 \leq d_i < (G:H)\end{array} \biggr\}. \]
 Set $\eta' = \cln{{g'}^* D_0}$. Then for any $\beta$ occurring in the inequality above, we have
\[(G:H) \cdot \deg_{\eta'}(\beta) = \deg_{\eta'}(\psi(\beta)) = \deg_{\eta'}({g'}^*D)+\sum_{i=1}^r d_i \deg_{\eta'}(E'_i).\]
With $\eta=\cln{D_0}$, one has $\deg_{\eta'}({g'}^* D)=\# G \cdot \deg_\eta(D)$ by \cite[Proposition 1.10]{Deb01}. Set $C = \sum_i \deg_{\eta'}(E'_i)$. Then
\[\# H \cdot \deg_\eta(D) \leq \deg_{\eta'}(\beta) \leq \# H \cdot \deg_\eta(D) + C.\]
Hence,
\begin{equation}\label{equp1}
\;\dim \mf S^Y_{D} \leq \max \biggl\{ \dim \Bigl(\DIV_{Z/k}^\beta\Bigr)^{\!H} \biggr| \, \beta \!\in\! \NeSe{Z}^H,\, \deg_{\eta'}(\beta)\! \leq\! \# H \!\cdot \deg_\eta(D) + C \biggr\}.
\end{equation}

For a fixed class $\beta$, consider the Abel map $W := (\DIV_{Z/k}^{\beta})^H \to (\PIC_{Z/k}^{\beta})^H=: V$. Then by \cite[Corollaire 5.6.7]{Gro65}, we have
\[ \dim W \leq \dim V + \max \{\dim W_v \ | \ v \in V \}. \]
Because of the upper semicontinuity of the dimension of the fiber (\cite[Corollaire 13.1.5]{Gro66}), it is enough to take the maximum only over the closed points of $V$. But the closed points correspond to invertible sheaves $\Lc$ with numerical equivalence class $\beta$, and the corresponding fibers $W_\Lc$ are just the schemes $\mf L_{\Lc}^H$, hence have dimension $\dim H^0(Z,\Lc)^H-1$. Furthermore, $\dim V \leq \dim \PIC_{Z/k}^{\beta} = \dim \PIC_{Z/k}^{\tau}$. Therefore
\begin{equation}\label{equp2}
\dim \left(\DIV_{Z/k}^{\beta}\right)^H \leq \dim \PIC_{Z/k}^\tau -1 + \max \Bigl\{ \dim H^0(Z,\Lc)^H \ \Bigl| \ \cln{\Lc}=\beta \Bigr\}
\end{equation}
which applied to \eqref{equp1} gives
\[
\dim \mf S^Y_{D} \leq \dim \PIC_{Z/k}^\tau + \max \biggl\{ \dim H^0(Z,\Lc)^H \ \biggl| \begin{array}{c}\Lc \in \Pic(Z)^H,\quad\\ \deg_{\eta'}(\Lc) \leq  \# H \cdot \deg_\eta(D) + C\end{array} \biggr\} .
\]
Applying Proposition \ref{Hvol} in the way described in Remark \ref{Hvolrem}, we derive
\[\limsup_{m \to \infty} \frac{\dim \mf S^Y_{m D_0}}{m^d / d!} \leq \lim_{m \to \infty} \frac{v_{max} \bigl(\# H \!\cdot \deg_\eta(m D_0) + C\bigr)^d}{\#H \cdot m^d}= v_{max} (\# H)^{d-1} \vol(D_0)^d\]
with $v_{max}=\vol({g'}^* D_0)^{1-d}=(\# G \cdot \vol(D_0))^{1-d}$. This implies the assertion.
\end{proof}

This gives an upper bound. For the lower bound, we use the following

\begin{lemma}\label{lowerbound}
Let $D_0$ be an ample Cartier divisor on $X$, $E$ be a Cartier divisor on $Y$ with $f_* E \sim D_0$. Set $E_m = \lfloor \frac{m}{n} \rfloor f^* D_0 + (m -\lfloor\frac{m}{n}\rfloor n)E$, where $n = \deg(f)$. Then for all $m \gg 0$, the image of the map \[\mf f_*: \mf L_{E_m} \to \mf L_{mD_0}\] intersects nontrivially with $\mf P_{mD_0}$; in particular, we have \[\dim \mf S_{mD_0}^Y \geq \dim \mf L_{E_m} = \frac{\vol(D_0)}{n^{d-1}} \frac{m^d}{d!} + O(m^{d-1}).\]
\end{lemma}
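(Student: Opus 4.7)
The plan is to establish three pieces—the asymptotic for $\dim \mf L_{E_m}$, the existence of some $F \in \mf L_{E_m}$ with $f_* F \in \mf S^Y_{m D_0}$, and the dimension inequality via the finiteness of $\mf f_*$. Since $f$ is finite and $D_0$ is ample, $f^* D_0$ is ample on $Y$; writing $E_m = m_0 f^* D_0 + r_m E$ with $m_0 = \lfloor m/n \rfloor$ and $r_m \in \{0, \ldots, n-1\}$, the sheaf $\O_Y(r_m E)$ ranges over only $n$ classes, so applying Proposition \ref{asyRR} to $L = f^* D_0$ with $\Fc = \O_Y(r_m E)$ together with Lemma \ref{volpullback} ($\vol(f^* D_0) = n \vol(D_0)$) yields
\[
h^0(Y, E_m) = \frac{\vol(f^* D_0)}{d!} m_0^d + O(m_0^{d-1}) = \frac{\vol(D_0)}{n^{d-1}} \frac{m^d}{d!} + O(m^{d-1}).
\]
For $m \gg 0$, $E_m$ is very ample, so $\mf L_{E_m}$ is non-empty of dimension $h^0(Y, E_m) - 1$. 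A short calculation with $f_* f^* D_0 = n D_0$ and $f_* E \sim D_0$ shows $f_* E_m \sim m D_0$, so $\mf f_* : \mf L_{E_m} \to \mf L_{m D_0}$ is well-defined.

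The crux is producing some $F \in \mf L_{E_m}$ with $f_* F$ geometrically integral. The key observation is that, for an integral Cartier divisor $F$ on $Y$, one has $f_* F = [k(F):k(f(F))] \cdot f(F)$ as a Weil divisor, so $f_* F$ is geometrically integral precisely when $f|_F : F \to f(F)$ is birational. To force this, I pass to $\bar k$ (which does not change dimensions) and pick a closed point $y_0 \in Y$ in the \'etale locus of $f$ whose fiber $f^{-1}(f(y_0)) = \{y_0, y_1, \ldots, y_{n-1}\}$ consists of $n$ distinct reduced points. For $m \gg 0$, the very ample $E_m$ separates this finite set, so Bertini produces a geometrically integral $F \in |E_m|$ containing $y_0$ and avoiding $y_1, \ldots, y_{n-1}$. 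Choosing a Zariski open $U \ni y_0$ on which $f$ is \'etale and disjoint from $\{y_1, \ldots, y_{n-1}\}$, the fiber of $f|_U$ over $f(y_0)$ is $\{y_0\}$, and together with \'etaleness this forces $f|_U$ to be a local isomorphism at $y_0$. Restricting to $F \cap U$ gives an open immersion into $f(F)$, so $f|_F$ is birational and $f_* F = f(F)$ is geometrically integral. A further generic condition (avoiding the branch locus of $Z \to X$) places $f_* F$ in $\mf P'_{m D_0}$; since $F$ exhibits the splitting of $f_* F$ in $Y$, we get $f_* F \in \mf S^Y_{m D_0}$.

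For the dimension bound, the morphism $\mf f_*$ is proper (as a morphism of projective schemes) and has finite fibers: for a fixed effective $D$ on $X$, the effective $F$ with $f_* F = D$ correspond to finitely many non-negative integer decompositions of $D$ along preimages of its prime components. Hence $\mf f_*$ is finite, and the preimage $\mf f_*^{-1}(\mf S^Y_{m D_0})$ is a locally closed subset of $\mf L_{E_m}$ containing the non-empty open locus of ``good'' $F$ from the previous paragraph, hence of dimension $\dim \mf L_{E_m}$. Its image in $\mf S^Y_{m D_0}$ also has dimension $\dim \mf L_{E_m}$, yielding the asserted bound. The main obstacle is the birationality of $f|_F$; the \'etale-local argument above sidesteps the difficulty that $y_0$ is chosen before $F$ and therefore need not be a generic point of $f(F)$.
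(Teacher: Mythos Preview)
Your approach is correct in outline and genuinely different from the paper's. The paper does not use a Bertini-with-base-point argument at all: it works Galois-theoretically, passing up to the cover $Z\to X$. It sets $\mf V_m = \mf P_{E_m}\setminus\bigcup_{Y'}\mf S^{Y'}_{E_m}$ (the locus of prime $E'\sim E_m$ whose pullback $h^*E'$ to $Z$ remains prime), uses the already-proven upper bound (Lemma~\ref{upperbound}) to see that $\mf V_m$ is dense in $\mf L_{E_m}$, and then argues by contradiction: if no $E'\in\mf V_m$ pushed forward to a prime divisor, then every $h^*E'$ would be $H'$-invariant for some fixed $H'\supsetneq H$, forcing $K(Y)\subseteq K(Z)^{H'}$ by very ampleness of $E_m$, a contradiction. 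Your argument is more elementary in that it avoids both the Galois closure and any appeal to Lemma~\ref{upperbound}; the paper's argument, on the other hand, identifies an explicit dense open of ``good'' $E'$ and is reused verbatim in the finite-field version (Theorem~\ref{dens3}).

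There is, however, a real gap in your justification that $f|_F$ is birational. \'Etale morphisms are \emph{not} local isomorphisms in the Zariski topology: even with the single-point fiber over $x_0=f(y_0)$, the map $f|_U$ need not be injective on any Zariski neighbourhood of $y_0$ (remove one preimage from a nontrivial finite \'etale cover and the remaining map is still generically $n:1$). So the sentence ``restricting to $F\cap U$ gives an open immersion into $f(F)$'' is unsupported. The fix is short and stays entirely within your setup: since $f$ is \'etale at $y_0$, one has $\mf m_{x_0}\O_{Y,y_0}=\mf m_{y_0}$, so the scheme-theoretic fiber of $f|_F:F\to f(F)_{\mathrm{red}}$ over $x_0$ is $\Spec\bigl(\O_{Y,y_0}/(\mf m_{y_0},t_F)\bigr)=\Spec k$, i.e.\ a single reduced point. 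Now $f|_F$ is finite, so $(f|_F)_*\O_F$ is coherent on the integral scheme $f(F)_{\mathrm{red}}$, and upper semicontinuity of $b\mapsto\dim_{k(b)}\bigl((f|_F)_*\O_F\otimes k(b)\bigr)$ gives $[k(F):k(f(F))]\le 1$. Hence $f_*F=f(F)$ is prime, as you wanted.
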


\begin{proof}
We can assume that $k$ is algebraically closed. For the first assertion, it is enough to show that $(\mf f_* \mf L_{E_m} \cap \mf P_{mD_0})(k) \neq \emptyset$. To do this, set $\mf V_m = \mf P_{E_m} - \bigcup_{Y'} \mf S^{Y'}_{E_m}$, where $Y'$ runs through all of the finitely many intermediate covers of $Z \to Y$ apart from $Y$ (including $Z$). By our definitions, $\mf V_m(k)$ consists of all Cartier prime divisors $E'$ linearly equivalent to $E_m$ that do not split in any of those intermediate covers. By Hilbert's decomposition theory, this is equivalent to saying that $h^* E'$ is a prime divisor. $\mf V_m$ is an open subset of $\mf L_{E_m}$ and, for $m \gg 0$, our earlier dimension calculations show that $\mf V_m$ is, in fact, dense in $\mf L_{E_m}$.

Assume $\mf f_* \mf V_m \cap \mf P_{mD_0} = \emptyset$. For $E' \in \mf V_m(k)$, we have $f_* \WD{E'} = r \cdot f(\WD{E'})$ with $r = [K(\WD{E'}):K(f(\WD{E'}))] = (G_{h^* E'} : H_{h^* E'}) = (G_{h^* E'}: H)$, so $f_* E' \notin \mf P_{mD_0}(k)$ if and only if $G_{h^* E'} \supsetneq H$. Therefore, $h^*$ maps $\mf V_m$ into the union of the subsets of $\mf L_{h^*E_M}$ consisting of all $H'$-invariant divisors, where $H'$ runs through all groups $H \lneq H' \leq G$. In fact, since all these subsets are closed and $\mf V_m$ is dense in the irreducible scheme $\mf L_{E_m}$, there exists a subgroup $H'$ of $G$ with $H \lneq H'$ such that $h^* E'$ is $H'$-invariant for all $E' \in \mf L_{E_m}$. Thus the difference of any two divisors in $|E_m|$ is $\divd(u)$ with $u \in K(Z)^{H'}$; on the other hand, since $E_m$ is very ample for $m \gg 0$, the set of these $u$ generates $K(Y)$. So we get $K(Y) \subseteq K(Z)^{H'}$ and by Galois correspondence $H \geq H'$, which contradicts $H \lneq H'$.

 Since we know that $\mf U_m = \mf ({f_*}|_{\mf{L}_{E_m}})^{-1} \mf P_{mD_0} \subset \mf L_{E_m}$ is nonempty and open, it is a dense open subset of $\mf L_{E_m}$. Because $\mf f_* : \mf L_{E_m} \to \mf L_{mD_0}$ is finite, we get
 \[\dim \mf L_{E_m} = \dim \mf U_m = \dim (\mf f_* \mf L_{E_m} \cap \mf P_{mD_0}) \leq \dim \mf S^Y_{mD_0}.\]
 The rest follows from Proposition \ref{asyRR} and from the fact that $\vol(f^* D_0) = n \vol (D_0)$ (Lemma \ref{volpullback}).
 \end{proof}

\begin{rem}\label{remratpts}
The proof of the lemma also implies that for the case of an infinite field $k$, the sets $\mf S^Y_{mD_0}(k)$ and $\mf D^\mc{C}_{mD_0}(k)$ are nonempty (and indeed infinite) for $m \gg 0$. To see this, one just has to point out that $\mf V_m \cap \mf U_m$ is a nonempty open subset of the projective space $\mf L_{E_m}$, hence contains infinitely many $k$-rational points. The images of these points under $\mf f_*$ are $k$-rational points of $\mf S^Y_{mD_0}$, and all of those will have $\Cc$ as both decomposition group and geometric decomposition group by construction.

(In the case where $k$ is a finite field, $\mf S^Y_{mD_0}$ and $\mf D^\mc{C}_{mD_0}$ still have $k$-rational points for $m \gg 0$, but one has to use counting arguments instead of dimension arguments. This will be done in subsection \ref{secfin}.)
\end{rem}

From the bounds for the asymptotics of $\dim \mf S^Y_{mD_0}$, we can derive our main theorem instantly:

\begin{proof}[Proof of the Theorem \ref{dens1}]
From Lemma \ref{upperbound} and Lemma \ref{lowerbound}, we immediately get that
\[\frac{\dim \mf S^Y_{mD_0}}{m^d/d!} \xrightarrow{m \to \infty} \frac{1}{\deg(f)^{d-1}} \vol(D_0)\]
for any finite branched cover $f: Y \to X$ of $X$ with $Y$ normal and geometrically integral. Applying this fact to \eqref{defDbyS} in Corollary \ref{existD}, we get that for $m \gg 0$, $\dim \mf D^\Cc_{mD_0} = \dim \mf S^{Z/H}_{mD_0}$ for any representative $H$ of $\mc{C}$, so
\[\lim_{m \to\infty}\frac{\dim \mf D^{\Cc}_{mD_0}}{m^d/d!} = \frac{1}{(G:\Cc)^{d-1}} \vol(D_0).\]
Combining this with Corollary \ref{bertinicor}, we are done with the main part of the theorem.

If $D_0$ is not the push-forward of a Cartier divisor on $Z$, then $\# G \cdot D_0 = g_* g^* D_0$ still is. Also, the upper bounds hold without this restriction on $D_0$. This implies the remaining part of the theorem.
\end{proof}

\section{Modifications and Applications}\label{secapp}

As mentioned in the introduction, there are several ways to modify Theorem \ref{dens1}. We will present three different extensions, followed by an alternative proof of the Bauer-Schmidt theorem.

\subsection{Polynomial behaviour of $\dim \mf D^\Cc_{mD_0}$.}

In the main theorem, we have only used the asymptotic behaviour of $\dim \mf P_{mD_0}$ and $\dim \mf D^\Cc_{mD_0}$. But at least for $\dim \mf P_{mD_0}$, we know more than just that: For $m \gg 0$, the function $m \mapsto \dim \mf P_{mD_0}=h^0(X,mD_0)-1$ is polynomial, and its coefficients can be determined using the Hirzebruch-Riemann-Roch formula (\cite[Appendix A, Theorem 4.1]{Har77}). Thus, one can expect $\dim \mf D^\Cc_{mD_0}$ to behave polynomially as well. This is indeed true, at least under some further assumptions on $Z$ and $X$:

\begin{theo}\label{dens2}
In the situation of Theorem \ref{dens1}, assume furthermore that $Z$ and $X$ are regular and $g: Z \to X$ is \'{e}tale.
We assume that $D_0$ is an ample Cartier divisor that is linearly equivalent to the push-forward of a Cartier divisor on $Z$. Then there are polynomials $Q_0(t), \ldots, Q_{(G:\Cc)-1}(t) \in \Q[t]$ such that
\[\dim \mf D^\Cc_{mD_0} = Q_{r(m)}(m) \quad \foral m \gg 0,\]
where $r(m) = m \!\!\mod (G:\Cc) \in \{0, \ldots, (G:\Cc)-1\}$.
\end{theo}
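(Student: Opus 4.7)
The plan is to reduce to analyzing $\dim \mf S^Y_{mD_0}$ for $Y = Z/H$ (with $H$ a representative of $\Cc$, $f: Y \to X$ the canonical map, $n := (G:\Cc) = (G:H)$), then use the Picard-scheme fibration of $\DIV_{Y/k}$ together with Hirzebruch-Riemann-Roch to extract a polynomial for each residue class $r = m \bmod n$. Since $g$ is étale, so are $h: Z \to Y$ and $f: Y \to X$, and in particular $Y$ is regular. By \eqref{defDbyS} combined with the strict asymptotic inequality from Theorem \ref{dens1} applied to intermediate covers $Z \to Y' \to Y$, we have $\dim \mf D^\Cc_{mD_0} = \dim \mf S^Y_{mD_0}$ for $m \gg 0$. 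Proposition \ref{splitreg} (with no exceptional divisors in play) reduces the splitting criterion to: a geometrically integral $D' \sim mD_0$ splits in $Y$ iff $D' = f_*E'$ for some effective Cartier divisor $E'$ on $Y$. Therefore $\mf S^Y_{mD_0} = \mf f_*(\mf W_m)$ with $\mf W_m := \mf f_*^{-1}(\mf P_{mD_0}) \subset \DIV_{Y/k}$, and by finiteness of $\mf f_*$ (Lemma \ref{schpf}), $\dim \mf S^Y_{mD_0} = \dim \mf W_m$.

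Via the Abel map, $\mf W_m$ fibers over $\mf V_m := \mf f_*^{-1}(\O_X(mD_0)) \subset \PIC_{Y/k}$ with fibers isomorphic to $\mathbb{P}H^0(Y,\Lc)$. The subscheme $\mf V_m$ is a torsor under $\ker \mf f_*$, whose identity component is a fixed abelian subvariety $A \subset \PIC^0_{Y/k}$ of some dimension $e$ independent of $m$. Each component of $\mf V_m$ is a translate of $A$, hence lies in a single numerical class, and by Corollary \ref{psefcor} only finitely many components carry classes in $\overline\Eff(Y)$ and so contribute to $\mf W_m$. Fix a residue $r \in \{0,\ldots,n-1\}$, write $m = qn + r$, and set $E_m := q\, f^*D_0 + rE$, where $E$ is a fixed Cartier divisor on $Y$ with $f_*E \sim D_0$ (take $E = h_*F$ whenever $D_0 = g_*F$). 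Then $\cln{E_m} \in f_*^{-1}(m\cln{D_0})$, and $E_m$ is ample for $q \gg 0$ since $f^*D_0$ is ample. Proposition \ref{volpushforward} combined with the equality case of Proposition \ref{logconcave} shows that $\vol$ attains its unconstrained maximum on the affine slice $f_*^{-1}(m\cln{D_0}) \subset \NeSe{Y}_\R$ uniquely at $(m/n)\cln{f^*D_0}$; for fixed $r$ and $m \gg 0$, the integer lattice point in the relevant coset approximating this real maximum is stably $\cln{E_m}$, and every competing effective class in $\mf V_m$ suffers a $\vol^{1/d}$-deficit bounded below by a positive constant (depending only on $r$). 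Translating this back gives an $h^0$-gap of order $\Theta(m^{d-1})$ between the dominant component and every other component, so the component of $\mf V_m$ containing $\cln{E_m}$ uniquely maximizes the generic $h^0$, yielding $\dim \mf W_m = e + h^0(Y,E_m) - 1$.

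By Serre vanishing applied to the ample divisor $f^*D_0$, $H^i(Y, E_m) = 0$ for $i > 0$ and $q \gg 0$, so $h^0(Y, E_m) = \chi(Y, E_m)$; moreover, by cohomology and base change, this equality holds throughout the dominant component of $\mf V_m$ for $m \gg 0$. The projection formula, together with $R^jf_* = 0$ for $j>0$ (as $f$ is finite), gives $\chi(Y, E_m) = \chi(X, \O_X(qD_0) \otimes f_*\O_Y(rE))$, and by the asymptotic Riemann-Roch theorem (Proposition \ref{asyRR}) this is a polynomial in $q$ of degree $d$ for each fixed $r$. Defining
\[
Q_r(t) := e + \chi\!\left(X,\, \O_X\!\left(\tfrac{t-r}{n}D_0\right) \otimes f_*\O_Y(rE)\right) - 1 \; \in \; \Q[t]
\]
for $r = 0, \ldots, n-1$ then yields $\dim \mf D^\Cc_{mD_0} = Q_{r(m)}(m)$ for all $m \gg 0$.

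The main obstacle is the uniform-dominance claim in paragraph two: showing that for each fixed residue class $r$, the component of $\mf V_m$ containing $\cln{E_m}$ produces $\dim \mf W_m$ exactly, not merely asymptotically, for all sufficiently large $m \equiv r \pmod{n}$. This requires the strict log-concavity of $\vol^{1/d}$ on the big cone to translate into a uniform constant gap in $\vol^{1/d}$ between $\cln{E_m}$ and its finitely many competing integer classes (controlled by Corollary \ref{psefcor}), together with ruling out that Brill-Noether jumps on subcomponents of non-dominant components contribute equally to the top dimension; the latter follows from the $\Theta(m^{d-1})$ gap in $h^0$, which cannot be closed by any bounded jump in cohomology.
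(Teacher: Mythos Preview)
Your overall architecture matches the paper's: reduce to $\dim \mf S^Y_{mD_0}$, exploit regularity of $Y$ so that splitting is equivalent to being a push-forward from $Y$, fiber over the Picard scheme, and use Riemann--Roch to see polynomial behaviour of $\chi$. The reduction steps and the formula $\dim \mf W_m = e + (\text{max of }h^0\text{'s over }\mf V_m) - 1$ are essentially the paper's as well.

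The genuine gap is your claim that the \emph{specific} class $\cln{E_m} = q\cln{f^*D_0} + r\cln{E}$ realises the maximum of $h^0$ over $\mf V_m$ for all $m\gg 0$ in a given residue class. Your argument for this is a volume argument: the real maximum of $\vol$ on the affine slice $f_*^{-1}(m\cln{D_0})$ sits at $(m/n)\cln{f^*D_0}$, and you assert $\cln{E_m}$ is the lattice point ``approximating'' it. But (i) the $E$ you chose is arbitrary among divisors with $f_*E\sim D_0$, so there is no reason $\cln{E_m}$ is the closest lattice point to $(m/n)\cln{f^*D_0}$; another choice of $E$ could do better. More seriously, (ii) ``closest in volume'' is not the same as ``maximal $\chi$''. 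Writing $\beta = m\eta + v$ with $v\in\ker f_*\otimes\R$, one has $\eta^{d-1}\cdot v=0$ (as the paper observes), so the $m^d$ and $m^{d-1}$ coefficients of $\chi(\beta)$ are independent of $v$; the first place the competing lattice classes differ is the $m^{d-2}$ coefficient, and this coefficient contains not only the volume Hessian term $\binom{d}{2}\eta^{d-2}\cdot v^2$ but also a term linear in $v$ coming from the $K_Y$-contribution in Riemann--Roch. That linear term can shift the optimum away from the volume-nearest lattice point. Consequently your explicit formula $Q_r(t)=e+\chi\bigl(X,\O_X(\tfrac{t-r}{n}D_0)\otimes f_*\O_Y(rE)\bigr)-1$ is not just unjustified; it can be the wrong polynomial.

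The paper avoids this by not naming a winner. Its Lemma \ref{polymax} shows, using only negative-definiteness of the Hessian of the top-degree part at $\underline\tau=0$, that for each residue class the maximum of $P(m,\underline t)=\chi(Y, m\eta+\sum t_i\beta_i)$ over the relevant lattice coset is attained in a region $|\underline t|<R$ with $R$ independent of $m$. For fixed $r$ this leaves a fixed finite list of candidate polynomials in $m$, and the maximum of finitely many polynomials is eventually one of them. This is the step you should substitute for your ``stably $\cln{E_m}$'' claim; once you do, the rest of your argument (Fujita vanishing to get $h^0=\chi$ uniformly on nearby ample classes, constancy of $\chi$ on numerical classes, etc.) lines up with the paper's and goes through.
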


\begin{proof}
As in the proof of the main theorem, it is enough to prove that $\dim \mf S^Y_{mD_0}$ has the claimed behaviour, where $Y = Z/H$, $H$ a representative of $\Cc$, $f: Y \to X$ the corresponding cover.

By Lemma \ref{splitWeil}, we know that in this case, $\mf S^Y_{mD_0} = \mf f_* \DIV_{Y/k} \cap \mf P'_{mD_0}$. In this formula, we can even replace $\DIV_{Y/k}$ by the union of the $\DIV_{Y/k}^\beta$ for those $\beta$ that fulfill $\beta - \frac{m}{\deg(f)} \cln{f^* D_0} \in \ker (f_*: \NeSe{Y}_\R \to \NeSe{X}_\R$).

\noindent Let $\beta_1, \ldots, \beta_s \!\in\! \NeSe{Y}$ be a basis of $\ker f_*$, and set \mbox{$\eta \!=\! \frac{1}{\deg(f)}\cln{f^* D_0} \!\in\! \NeSe{Y}_\R$}. Let $W$ be the affine space containing $\eta$ and spanned by $\beta_1, \ldots, \beta_s$, and set
\[S = \left\{\eta + \sum_{i=1}^s t_i \beta_i \ | \ \underline t=(t_i)_i \in \R^s, |\underline t|\geq c\right\}\cap \overline\Eff(Y)\]
for some fixed $c>0$. By Proposition \ref{Hvol}, Remark \ref{Hvolrem} and Proposition \ref{volpushforward}, we have
\[\limsup_{m \to \infty} \frac{\max \bigl\{ \dim \DIV_{Y/k}^\beta\ \bigl| \ \beta \in \NeSe{Y} \cap m S\bigr\}}{m^d/d!} \leq \max \{ \vol(s) \ | \ s\in S\} < \vol(\eta).\]
Since $\vol(\eta) = \deg(f)^{1-d} \vol(D_0) = \limsup_{m \to \infty} \frac{\mf S^Y_{mD_0}}{m^d/d!}$, we can restrict our considerations to the $\beta \in \NeSe{Y}$ of the form $m \eta + \sum_i t_i \beta_i$ with $|\underline t| < cm$. For $c$ small enough, we can assume that any such class $\beta$ is ample.

For any of those $\beta$, let us take a closer look at the line bundles representing it: Using Fujita's vanishing theorem \cite[Theorem 1.4.35]{Laz04}  and the fact that the Euler characteristic is invariant under numerical equivalence, we get that $h^0(Y, \Lc)=\chi(Y,\Lc)$ is the same for all $\Lc$ with $\cln{\Lc}=\beta$ (we might have to decrease $c$). Hence we have \[\dim \left(\mf f_* \DIV^\beta_{Y/k} \cap \mf P'_{mD_0}\right) = \chi(X,\beta)-1 + \dim \ker \bigl(\mf f_*: \PIC^\tau_{Y/k} \to  \PIC^\tau_{X/k}\bigr),\]
where $\chi(Y, \beta)=\chi(Y,\Lc)$ for any line bundle $\Lc$ representing $\beta$.

Therefore, it is enough to show that
\[\max \Bigl\{ \chi(Y,\beta) \ \Bigl| \ \beta \in \NeSe{Y},  \, \beta=m\eta+\textstyle\sum_i t_i \beta_i, \, |\underline t|\leq cm \Bigr\}\]
has the asserted polynomial behaviour. By assumption, there exists a Cartier divisor $E_0$ on $Y$ whose push-forward is linearly equivalent to $D_0$. Hence, every divisor class on $Y$ mapping to $m\cln{D_0}$ can be written as $m \cln{E_0}+\sum_i t'_i \beta_i$ with some $t'_i \in \Z$, and by Hirzebruch-Riemann-Roch or simply \cite[Theorem 1.5]{Deb01},
\[\chi\Bigl(Y,m \cln{E_0}+\textstyle\sum_i t'_i \beta_i\Bigr)\]
behaves polynomially in $m$ and $\underline t'\!$. Since $\cln{E_0}\!\!-\eta \!=\! \sum_i \delta_i \beta_i$ for some $\underline\delta \in \frac{1}{\deg(f)} \Z^s$, we can make a transformation of variables and get that $\chi(Y, m\eta+\sum_i t_i \beta_i)$ is a polynomial in $m$ and $\underline t$ where defined, i.e.\ for $m \in \Z$ and $\underline t - m \underline \delta \in \Z^s$.

We want to use Lemma \ref{polymax} below on $P(m, \underline t)=\chi(Y, m\eta+\sum_i t_i \beta_i)$. In order to show that the conditions of the lemma are fulfilled, we have to consider the highest degree homogeneous part, which by definition is $(m\eta+\sum_i t_i \beta_i)^d$. Using the notation of the lemma, we thus have
\[P_d(\underline \tau) = \bigl(\eta+\textstyle\sum_i \tau_i \beta_i\bigr)^d=\vol\bigl(\eta+\textstyle\sum_i \tau_i \beta_i\bigr) \quad \text{for }|\underline \tau|<c.\]
By Proposition \ref{volpushforward}, $P_d(\underline \tau)$ has a maximum at $\underline \tau =0$, from which we derive that
\[\eta^{d-1} \cdot \beta = 0 \quad \foral \beta \in  \ker \bigl(f_*: \NeSe{Y}_\R \to \NeSe{X}_\R\bigr)\]
and
\[\eta^{d-2} \cdot \beta^2 \leq 0 \quad  \foral \beta \in  \ker f_*,\] the last statement meaning nothing else but that the Hessian of $P_d(\underline \tau)$ at $\underline \tau =0$ is negative semidefinite. Now for any $\beta \in  \ker f_*$ with $\eta^{d-1} \cdot \beta = 0$ and $\eta^{d-2} \cdot \beta^2 = 0$,  \cite[Theorem 9.6.3]{Kle05} implies $\beta = 0$. Therefore the Hessian of $P_d(\underline \tau)$ at $\underline \tau =0$ is negative definite, i.e.\ the assumptions of Lemma \ref{polymax} are fulfilled. Hence there exists an $R>0$ such that
\[\begin{split}
\max \Bigl\{\chi\bigl(Y,m \eta + &\textstyle\sum_{i=1}^s t_i \beta_i\bigr) \ \Bigl| \ \underline t \in m \underline\delta + \mb Z^s, |\underline t|< cm \Bigr\}\\
 &= \max \Bigl\{\chi\bigl(Y,m \eta + \textstyle\sum_{i=1}^s t_i \beta_i\bigr) \ \Bigl| \ \underline t \in m \underline\delta + \mb Z^s, |\underline t|< R \Bigr\}.
\end{split}\]
For fixed $r(m)$, the right-hand side is just the maximum of finitely many fixed polynomials in $m$ (namely the $P(m, \underline t)$ for which $\underline t - r(m)\underline\delta \in \Z^s$ and $|\underline t|<R$), so polynomial itself for $m \gg 0$. Our assertion follows.
\end{proof}

\begin{lemma}\label{polymax}
Let $P(m,\underline t)\in \R[m,\underline t]$ be a polynomial of degree $d$, and let $P_d(m, \underline t)$ denote its highest degree homogeneous part. Assume that $\tilde P_d(\underline \tau)=m^{-d} P_d(m, \underline t)$ (with $\underline \tau = m^{-1} \underline t$) has a local maximum at $\underline \tau=0$, and that the Hessian matrix of $\tilde P_d(\underline \tau)$ at $\underline \tau=0$ is negative definite. Then the following holds:
\begin{enumerate}[(a)]
\item There exist a constant $c > 0$ such that for any fixed $m \gg 0$, $P(m, \underline t)$ is strictly concave in $\underline t$ for $|\underline t|<cm$.
\item For any $c$ as above, there exists an $R > 0$ such that for any $\delta \in \R^s$ and any fixed $m \gg 0$, one has
\[\max \bigl\{ P(m, \underline t) \ \bigl| \ \underline t \in m \delta + \mb Z^s, |\underline t|< cm \bigr\}= \max \bigl\{ P(m, \underline t) \ \bigl| \ \underline t \in m \delta + \mb Z^s, |\underline t|< R \bigr\}.\]
\end{enumerate}
\end{lemma}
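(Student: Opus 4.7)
The plan is to rescale coordinates and view $P$ as a small polynomial perturbation of $\tilde P_d$. Set $Q(m,\underline\tau) := m^{-d}P(m, m\underline\tau)$; this is a polynomial in $\underline\tau$ and $m^{-1}$, with finite expansion $Q(m,\underline\tau) = \tilde P_d(\underline\tau) + \sum_{j\geq 1} m^{-j} R_j(\underline\tau)$. Consequently $Q(m,\cdot)$ and each of its $\underline\tau$-derivatives converge uniformly on compact sets to those of $\tilde P_d$, with error $O(m^{-1})$. For part a), the scaling identity $\partial^2_{\tau_i\tau_j}Q(m,\underline\tau) = m^{2-d}\,\partial^2_{t_i t_j}P(m,m\underline\tau)$ shows that the Hessian of $P(m,\cdot)$ in $\underline t$ equals $m^{d-2}$ times the Hessian of $Q(m,\cdot)$ in $\underline\tau$. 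Since $\mathrm{Hess}_\tau\tilde P_d$ is negative definite on some closed ball $|\underline\tau|\leq c$ by continuity from $\underline\tau = 0$, the approximation forces $\mathrm{Hess}_\tau Q(m,\underline\tau)$ to be negative definite on this ball for $m\gg 0$; hence $P(m,\cdot)$ is strictly concave on $|\underline t|<cm$.

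For part b), I first locate the continuous maximum of $P(m,\cdot)$ on $|\underline t|<cm$. From $\nabla_\tau\tilde P_d(0)=0$, one has $\nabla_\tau Q(m,0)=O(m^{-1})$; solving $\nabla_\tau Q(m,\underline\tau)=0$ via the implicit function theorem (using the Hessian estimates above) produces a unique critical point $\underline\tau^*(m)$ of $Q(m,\cdot)$ in $|\underline\tau|<c$ with $|\underline\tau^*(m)|=O(m^{-1})$. Rescaling, $\underline t^*(m):=m\underline\tau^*(m)$ is the unique maximum of $P(m,\cdot)$ on $|\underline t|<cm$, and $|\underline t^*(m)|\leq C_1$ uniformly in $m\gg 0$. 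Fix uniform bounds $c_0>0$, $C_3>0$ on the smallest and largest eigenvalues of $-\mathrm{Hess}_\tau Q(m,\underline\tau)$ on $|\underline\tau|\leq c$. For any $\delta\in\R^s$, a lattice-covering argument yields $\underline t_0\in m\delta+\Z^s$ with $|\underline t_0-\underline t^*(m)|\leq \sqrt{s}/2$, and Taylor expansion at $\underline t^*(m)$ together with the Hessian bounds gives
\[0\leq P(m,\underline t^*(m))-P(m,\underline t_0)\leq \tfrac{C_3 s}{8}\,m^{d-2}\]
and, for every $\underline t$ in the concavity region,
\[P(m,\underline t^*(m))-P(m,\underline t)\geq \tfrac{c_0}{2}\,m^{d-2}\,|\underline t-\underline t^*(m)|^2.\]
Since $P(m,\underline t_{\max})\geq P(m,\underline t_0)$ for the lattice maximum $\underline t_{\max}$, combining these two estimates yields $|\underline t_{\max}-\underline t^*(m)|^2\leq C_3 s/(4c_0)$, hence $|\underline t_{\max}|\leq R:=C_1+\sqrt{C_3 s/(4c_0)}$ independently of $m$ and $\delta$. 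This gives the claimed equality of the two maxima, noting that $R > \sqrt s/2$ ensures lattice points in the smaller ball also exist.

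The main technical care is keeping all Hessian, gradient, and Taylor error bounds uniform in $m$; this is automatic from the polynomial nature of $P$, because $Q(m,\underline\tau)-\tilde P_d(\underline\tau)$ is a genuine finite sum of $m^{-j}R_j(\underline\tau)$ and is therefore honestly $O(m^{-1})$ together with all $\underline\tau$-derivatives on each compact set. Independence of $R$ from $\delta$ comes for free because lattice covering is translation-invariant: the approximation bound $\sqrt s/2$ does not see $m\delta$.
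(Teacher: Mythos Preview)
Your proof is correct and follows essentially the same approach as the paper. Both arguments rescale to $\tilde P(m,\underline\tau)=m^{-d}P(m,m\underline\tau)$, use continuity of the Hessian of $\tilde P_d$ for part a), then for part b) locate the continuous maximum near $\underline\tau=0$ and use uniform Hessian eigenvalue bounds together with a lattice-covering radius to trap the lattice maximizer in a ball of fixed size. The only cosmetic difference is that the paper writes down the first-order approximation to the critical point explicitly as $-m^{-1}N^{-1}B$ (with $N$ the Hessian of $\tilde P_d$ at $0$ and $B$ the gradient of $\tilde P_{d-1}$ at $0$) and compares values on two concentric spheres of radii $R_1/m$ and $R_2/m$, whereas you invoke the implicit function theorem for the critical point and compare Taylor bounds directly; the content is the same.
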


\begin{proof}
\begin{enumerate}[(a)]
\item
Set $\tilde P(m, \underline \tau)=m^{-d} P(m, \underline t)= \tilde P_d(\underline \tau) + m^{-1} \tilde P_{d-1}(\underline \tau) + \ldots + m^{-d} \tilde P_0(\underline \tau)$. Then there exists a $c > 0$ such that the Hessian of $\tilde P_d(\underline \tau)$ is negative definite at any $\underline \tau$ with $|\underline \tau| \leq c$, and for any fixed $m \gg 0$, this implies that the Hessian of $\tilde P(m, \underline \tau)$ is negative definite for whenever $|\underline \tau|\leq c$. This implies the assertion.
\item
Let $N$ be the Hessian matrix of $\tilde P_d(\underline \tau)$ at $\underline \tau=0$, and let $B \in \R^s$ such that $\tilde P_{d-1}(\underline \tau) = b_0 + B^T \underline \tau + O(|\underline \tau|^2)$. Set
\[\tilde Q(m, \underline \tau)= \tilde P(m, \underline \tau - m^{-1} N^{-1} B)- \tilde P(m, - m^{-1} N^{-1} B).\]
 Then $\tilde Q(m, \underline \tau) = \frac{1}{2}\underline \tau^T N \underline \tau + O(m^{-2} |\underline \tau|) + O(m^{-1} |\underline \tau|^2) + O(|\underline \tau|^3)$.

Take any $R_1>0$ such that every ball of radius $R_1$ in $\R^s$ contains a point of $\Z^s$, and fix $R_2 > \sqrt\frac{\lambda_\text{min}} {\lambda_\text{max}} R_1$, where $\lambda_\text{min}< \lambda_\text{max}<0$ are the minimal and the maximal eigenvalue of $N$. Then for any fixed $m \gg 0$, we have
\[\min \biggl\{ \tilde Q(m, \underline \tau) \ \biggr| \ |\underline \tau|=\frac{R_1}{m} \biggr\} > \max \biggl\{ \tilde Q(m, \underline \tau) \ \biggr| \ |\underline \tau|=\frac{R_2}{m} \biggr\}.\]
By concavity, it follows that for $m \gg 0$, any value of $\tilde P(m, \underline \tau)$ within the ball $|\underline \tau -\frac{1}{m} N^{-1} B| \leq \frac{R_1}{m}$ is greater than any value of  $\tilde P(m, \underline \tau)$ outside of the ball $|\underline \tau -\frac{1}{m} N^{-1} B| \leq \frac{R_2}{m}$. Taking $R = R_2 + |N^{-1} B|$ and using our assumption on $R_1$, our assertion follows.\qedhere
\end{enumerate}
\end{proof}

\subsection{A revised density}

The only drawback of Theorem \ref{dens1} is that it does not describe the asymptotic decomposition behaviour of all divisors, but only of the ones that are linearly equivalent to multiples of a given ample one. This subsection shows that the result indeed extends to all divisors as long as we use a reasonable method to determine their `size'.

So let us assume that we are in the situation of Theorem \ref{dens1}. We fix an ample class $\eta \in \NeSe{X}_\R$ and use the degree function $\deg_\eta: \NeSe{X}_\R \to \R, \ \delta \mapsto \eta^{d-1} \cdot \delta$, which has been described in Remark \ref{Hvolrem}.  For any $r \in \R$, set  $\mf{P}_{r}=\mf{P}_{r,\eta}$ and $\mf{D}^\Cc_{r}=\mf{D}^\Cc_{r,\eta}$ to be the reduced schemes representing the geometrically integral divisors on $X$ of degree $\leq r$, respectively those that additionally are unramified in the cover $Z \to X$ and have geometric decomposition class $\Cc$.

Then we can state a modified density theorem which describes the asymptotic decomposition behaviour of all Cartier divisors sorted by this notion of degree:

\begin{theo}\label{revdens}
In the above situation, $\mf{P}_{r}$ and $\mf{D}^\Cc_{r}$ exist for any $r\in \R$. Furthermore, we have
\[\lim_{r \to \infty} \frac{\dim  \mf{P}_{r}}{\dim \mf{D}^\Cc_{r}} = \frac{1}{(G:\mc{C})^{d-1}}.\]
\end{theo}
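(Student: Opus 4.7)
The plan is to establish matching asymptotics
\[\dim \mf{P}_r \sim \frac{r^d}{\vol(\eta)^{d-1}\,d!}, \qquad \dim \mf{D}^\Cc_r \sim \frac{r^d}{(G{:}\Cc)^{d-1}\vol(\eta)^{d-1}\,d!}\]
as $r \to \infty$, from which the ratio immediately gives the claim. For existence, I would observe that $\deg_\eta$ depends only on the numerical equivalence class of a divisor, and by Corollary \ref{psefcor} only finitely many classes $\delta \in \overline\Eff(X) \cap \NeSe{X}$ satisfy $\deg_\eta(\delta) \leq r$. Hence $\mf{P}_r$ is realized as the finite disjoint union of the schemes $\DIV^{\delta}_{X/k} \cap \GIDIV_{X/k}$ over these classes, and $\mf{D}^\Cc_r$ is built on each component by the construction of Corollary \ref{existD}.

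The upper bound on $\dim \mf{D}^\Cc_r$ will come from adapting the proof of Lemma \ref{upperbound}: fix a representative $H$ of $\Cc$, set $Y = Z/H$, and use the reference class $\eta' = {g'}^*\eta \in \NeSe{Z'}^H_\R$ in place of ${g'}^*D_0$. The Abel-map analysis of that proof shows that, for every Cartier divisor $D$ on $X$,
\[\dim \mf{S}^Y_D \leq \dim \PIC^\tau_{Z/k} + \max\bigl\{\dim H^0(Z',\Lc)^H \bigm| \cln{\Lc} \in \NeSe{Z'}^H,\ \deg_{\eta'}(\Lc) \leq \#H\cdot\deg_\eta(D) + C\bigr\}\]
with a constant $C$ independent of $D$. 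Applying Proposition \ref{Hvol} in the form of Remark \ref{Hvolrem} and computing $v_{\max} = \vol(\eta')^{1-d} = (\#G)^{1-d}\vol(\eta)^{1-d}$ via Lemmata \ref{volpullback} and \ref{degree}c), I obtain $\dim \mf{S}^Y_D \leq \deg_\eta(D)^d/((G{:}\Cc)^{d-1}\vol(\eta)^{d-1}d!)\cdot(1+o(1))$. Maximizing over the finitely many numerical classes with $\deg_\eta \leq r$, and relating $\mf{D}^\Cc$ to $\mf{S}^Y$ as in Corollary \ref{existD}, yields the upper bound on $\dim \mf{D}^\Cc_r$. The analogue for $\dim \mf{P}_r$ is obtained directly from the Abel map $\DIV^\delta_{X/k} \to \PIC^\delta_{X/k}$ and Proposition \ref{Hvol} on $X$ with trivial group action.

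The matching lower bounds will come from approximation. Given $\varepsilon>0$, Lemma \ref{degree}c) and the continuity of $\vol$ provide a rational ample class $\eta_0 \in \NeSe{X}_\Q$ with $\vol(\eta_0)/\deg_\eta(\eta_0)^d \geq (1-\varepsilon)/\vol(\eta)^{d-1}$. I would choose an ample Cartier divisor $D_0$ with $\cln{D_0}$ proportional to $\eta_0$ and, if necessary, replace it by $\#G\cdot D_0 = g_*g^*D_0$ so that it is a push-forward of a divisor on $Z$. Then Theorem \ref{dens1} gives $\dim\mf{D}^\Cc_{mD_0} \sim \vol(D_0)\,m^d/((G{:}\Cc)^{d-1}\,d!)$, and since $\mf{D}^\Cc_{mD_0} \subseteq \mf{D}^\Cc_r$ whenever $r \geq m\deg_\eta(D_0)$, setting $m = \lfloor r/\deg_\eta(D_0)\rfloor$ and letting $\varepsilon\to 0$ yields the matching lower bound; Corollary \ref{bertinicor} delivers the corresponding bound for $\dim\mf{P}_r$ in the same manner.

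The main obstacle, I expect, will be making the upper bound uniform across all Cartier divisors $D$ with $\deg_\eta(D)\leq r$, rather than only multiples of a single ample reference as in Lemma \ref{upperbound}: one has to check that the reference class $\eta'$ on $Z'$ and the constants extracted from Proposition \ref{Hvol} can indeed be chosen independent of the numerical class of $D$. A minor technicality is that ${g'}^*\eta$ is big and nef but not ample on $Z'$ (since $\pi$ contracts the exceptional divisors), whereas Remark \ref{Hvolrem} and Lemma \ref{degree}c) are formulated for ample reference classes; this can be handled either by perturbing to an ample $H$-invariant class on $Z'$ or by extending the relevant statements to the big-nef setting using continuity of $\vol$.
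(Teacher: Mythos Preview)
Your proposal is correct and follows essentially the same strategy as the paper: existence via the finiteness of numerical classes of bounded degree, upper bounds via the Abel-map/fiber-dimension argument together with Proposition \ref{Hvol} applied as in Remark \ref{Hvolrem}, and lower bounds by specializing to multiples of a suitably chosen ample divisor. The only organizational difference is that the paper first reduces to the case $\eta=\cln{D_0}$ for an ample Cartier divisor $D_0$, proves the precise asymptotics $\dim\mf P_r\sim r^d/(d!\,\vol(D_0)^{d-1})$ and $\dim\mf D^\Cc_r\sim r^d/(d!\,((G{:}\Cc)\vol(D_0))^{d-1})$ there (using the arguments of Lemmata \ref{upperbound} and \ref{lowerbound} verbatim), and only afterwards extends to arbitrary ample $\eta$ by scaling and continuity; you instead carry a general $\eta$ throughout and invoke rational approximation only for the lower bound. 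Both routes are equivalent, and your worry about uniformity in $D$ is exactly what the compact base $S=\{\delta\in\overline\Eff\cap\NeSe{}^H_\R:\deg_{\eta'}\delta=1\}$ in Proposition \ref{Hvol} is designed to handle. Your observation that ${g'}^*\eta$ is only big and nef on $Z'$ (so that Lemma \ref{degree} does not literally apply) is a genuine technical point that the paper's proof of Lemma \ref{upperbound} also passes over; your suggested fix of perturbing to an $H$-invariant ample class on $Z'$ is the natural remedy in either presentation.
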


\begin{proof}
Let us first assume that $\eta = \cln{D_0}$ for some ample Cartier divisor $D_0$.

We have
\[\mf{P}_r = \bigcup_{\beta \in \overline\Eff(X)\!,\, \deg(\beta) \leq r} \DIV_{X/k}^\beta \cap \GIDIV_{X/k},\]
so
\[\begin{split}
\dim \mf{P}_r & = \max \bigl\{\dim \mf{P}_{D}\bigl|\deg(D) \leq r\bigr\} + O(1) \\
&\leq \max \bigl\{ h^0(X,D)\bigl|\deg(D) \leq r\bigr\} + O(1).
\end{split}\]
Using Proposition \ref{Hvol} with $S = \{ \beta \in \overline\Eff(X) \ | \ \deg(\beta)=1\}$,  we get \[\limsup_{r \to \infty} \frac{\dim \mf P_r}{r^d/d!} \leq \max \bigl\{\vol(s)\bigl| s \in S \bigr\} = \frac{1}{\vol(D_0)^{d-1}}.\]
On the other hand, since $\deg(m D_0) = m \vol(D_0) \leq r$ for $m \leq \left\lfloor \frac{r}{\vol(D_0)} \right\rfloor$, we have
\[\dim \mf P_r \geq \dim \mf{P}_{\bigl(\left\lfloor \frac{r}{\vol(D_0)} \right\rfloor D_0\bigr)} = \frac{r^d}{d! \vol(D_0)^{d-1}} +O(r^{d-1}). \]
Therefore,
\[\lim_{r \to \infty} \frac{\dim \mf P_r}{r^d/d!}  = \frac{1}{\vol(D_0)^{d-1}}.\]

Similarly, we derive
\[\begin{split}
\dim \mf{D}^\Cc_r & = \max \bigl\{ \dim \mf{D}^\Cc_{D} \ \bigl| \ \deg(D) \leq r \bigr\} + O(1)\\
 &=\max \bigl\{ \dim \mf{S}^{Z/H}_{D} \ \bigl| \ \deg(D) \leq r \bigr\} + O(1)
 \end{split}\] for any representative $H$ of $\Cc$. In the exact same way as in the proof of Lemma \ref{upperbound}, we derive
\[\limsup_{r \to \infty} \frac{\dim \mf D^\Cc_r}{r^d / d!} \leq \frac{1}{\bigl((G:H) \vol(D_0)\bigr)^{d-1}}.\]
On the other hand, the argument in the proof of Lemma \ref{lowerbound} shows that for $r \gg 0$,
\[\dim \mf D^\Cc_r \geq \dim \mf L_{\bigl(\left\lfloor\frac{r}{\vol(f^* D_0)}\right\rfloor f^* D_0\bigr)} = \frac{r^d}{d! \bigl((G:H)\vol(D_0)\bigr)^{d-1}} +O(r^{d-1}),\]
where $f$ is the canonical map $Z/H \to X$.
This implies that $\frac{\dim \mf D^\Cc_r}{r^d/d!}$ converges to $\bigl((G\! : \!H)\vol(D_0)\bigr)^{1-d}$. From the limits for $\dim \mf P_r$ and $\dim \mf D^\Cc_r$, the assertion follows.

To get the assertion for arbitrary ample classes, we first remark that replacing $\eta$ by $\lambda \eta$ with $\lambda \in \R$ does not change the statement above, and then we extend by continuity arguments.
\end{proof}

\subsection{Finite fields}\label{secfin}

We stay in the same context as before, but assume furthermore that $k=\F_q$ is a finite field. Then we can actually count divisors: Fix some ample class $\eta \in \NeSe{X}_\R$, and set $\deg=\deg_\eta$. For $r \in \R$, let $p_\#(r)$ be number of Cartier prime divisors of degree at most $r$, and let $d^\mc{C}_\#(r)$ be the number of those divisors that additionally are unramified in the Galois cover $g:Z\to X$ and have decomposition class $\Cc$.

\begin{theo}\label{dens3}
Under the above assumptions,
\[\lim_{r\to\infty}\frac{\log d^\mc{C}_\#(r)}{\log p_\#(r)} = \frac{1}{(G:\mc{C})^{d-1}}.\]
\end{theo}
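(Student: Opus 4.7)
The plan is to reduce Theorem \ref{dens3} to Theorem \ref{revdens} by translating divisor counts into point counts on the parametrizing schemes via Lang--Weil estimates, and showing that the dominant contribution to both $p_\#(r)$ and $d^\Cc_\#(r)$ comes from the $k$-rational points of $\mf P_r$ and $\mf D^\Cc_r$ respectively.

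First I would set up the correspondence. Since $k$ is perfect, any Cartier prime divisor $W$ on $X$ has a reduced base change $W_{\bar k} = D_1 + \cdots + D_s$ in which the $D_i$ form a single Galois orbit of geometrically integral divisors; this orbit is a closed point of $\GIDIV_{X/k}$ of residue degree $s$, with $\deg_\eta(W) = s \cdot \deg_\eta(D_1)$. For $s = 1$ the divisor $W$ is itself geometrically integral, so its decomposition class equals its geometric decomposition class tautologically, giving bijections between the geometrically integral unramified prime divisors of degree $\leq r$ with decomposition class $\Cc$ and $\mf D^\Cc_r(k)$ (and similarly $\mf P_r(k)$ without the $\Cc$ condition); the finitely many ramified divisors contribute only an additive $O(1)$. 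For $s \geq 2$ the number of such $W$ with $\deg_\eta(W) \leq r$ is at most $\sum_{s \geq 2} \tfrac{1}{s}|\mf P_{r/s}(\F_{q^s})|$, which I would then show to be asymptotically negligible.

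For the estimates, recall from the proof of Theorem \ref{revdens} that $\dim \mf P_{r'} = c_1 (r')^d + O((r')^{d-1})$ for some constant $c_1 > 0$, so for $s \geq 2$ and $d \geq 2$,
\[s \cdot \dim \mf P_{r/s} \leq \frac{c_1 r^d}{s^{d-1}} + O(r^{d-1}) \leq \frac{c_1 r^d}{2^{d-1}} + O(r^{d-1}),\]
strictly less than $\dim \mf P_r$ by a constant factor in the leading term. Standard Lang--Weil upper bounds --- with constants controlled via a fixed projective completion of $\DIV_{X/k}^{\cln{D}}$, whose Hilbert data grow only polynomially in $r$ --- then give a total $s \geq 2$ contribution of order $q^{c_1 r^d / 2^{d-1} + O(r^{d-1})}$, negligible compared to the $s = 1$ lower bound $|\mf P_r(k)| \geq c \cdot q^{\dim \mf P_r}$. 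This lower bound in turn requires a geometrically integral $k$-rational component of $\mf P_r$ of maximal dimension, which exists: pick a Cartier divisor $D$ defined over $k$ with $\deg_\eta(D) \leq r$ maximizing $\dim \mf P_D$; then $\mf P_D$ is open and nonempty (by Bertini, Proposition \ref{bertini}) in the projective space $\mf L_D$, hence geometrically integral and defined over $k$. An analogous component of $\mf D^\Cc_r$ comes from the construction of Lemma \ref{lowerbound}: the image $\mf f_*(\mf V_m) \subseteq \mf D^\Cc_{mD_0}$, obtained from a dense open $k$-rational subset $\mf V_m$ of the projective space $\mf L_{E_m}$, supplies the needed component for $m \gg 0$. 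A parallel upper bound for $d^\Cc_\#(r) - |\mf D^\Cc_r(k)|$ follows trivially from $d^\Cc_\#(r) - |\mf D^\Cc_r(k)| \leq p_\#(r) - |\mf P_r(k)|$.

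Combining, one obtains $\log p_\#(r) = (\log q)\dim \mf P_r \cdot (1 + o(1))$ and $\log d^\Cc_\#(r) = (\log q)\dim \mf D^\Cc_r \cdot (1 + o(1))$, so by Theorem \ref{revdens},
\[\lim_{r \to \infty} \frac{\log d^\Cc_\#(r)}{\log p_\#(r)} = \lim_{r \to \infty} \frac{\dim \mf D^\Cc_r}{\dim \mf P_r} = \frac{1}{(G:\Cc)^{d-1}}.\]
The main technical obstacle I anticipate is making the Lang--Weil bounds uniform in $r$ and $s$: each individual point count is standard, but the implicit constants in $|\mf P_{r/s}(\F_{q^s})| = O(q^{s \dim \mf P_{r/s}})$ must grow at most polynomially in $r$ so that the summation over $s$ preserves the asymptotic. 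Fixing a projective completion of $\DIV_{X/k}^{\cln{D}}$ (which is already proper over $k$) and controlling Hilbert-polynomial coefficients as $r$ varies should supply the required uniformity.
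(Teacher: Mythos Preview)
Your overall strategy --- reduce to Theorem \ref{revdens} by showing $\log p_\#(r)\sim(\log q)\dim\mf P_r$ and $\log d^\Cc_\#(r)\sim(\log q)\dim\mf D^\Cc_r$ --- is natural, but there are two genuine gaps.

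\textbf{Decomposition class versus geometric decomposition class.} Your claim that for a geometrically integral prime divisor $W$ on $X$ the decomposition class equals the geometric decomposition class ``tautologically'' is false. Geometric integrality of $W$ means $k$ is algebraically closed in $k(W)$, but says nothing about the residue field $k(z)$ at a point $z\in Z$ lying over the generic point of $W$: the algebraic closure of $k$ in $k(z)$ may well be a nontrivial extension $k'$, in which case $z$ splits into $[k':k]$ points after base change to $\bar k$ and the geometric decomposition group is a proper subgroup of $G_z$. (Concretely, for a double cover $Z\to X$ given by $w^2=f$, a geometrically integral line $W$ with $f|_W=c\cdot g^2$, $c\in k^\times$ a nonsquare, is inert over $k$ but split over $\bar k$.) Consequently the sets $\{W:\Cc_W=\Cc\}$ and $\mf D^\Cc_r(k)=\{W:\Cc^{\mathrm{geom}}_W=\Cc\}$ are not nested in either direction, and your inequality $d^\Cc_\#(r)-|\mf D^\Cc_r(k)|\le p_\#(r)-|\mf P_r(k)|$ is simply wrong: a geometrically integral $W$ with $\Cc_W=\Cc$ but $\Cc^{\mathrm{geom}}_W\lneq\Cc$ contributes to the left side but not the right. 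The paper sidesteps this by never comparing $d^\Cc_\#$ to $\mf D^\Cc_r(k)$; it works instead with $s^Y_\#(r)$, the count of primes that split in $Y$ \emph{over $k$}, and bounds this directly via $H$-invariant effective divisors on $Z'$ (Lemma \ref{ndenup}) and the push-forward construction of Lemma \ref{lowerbound}.

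\textbf{The lower bound for $|\mf P_D(k)|$.} You assert $|\mf P_r(k)|\ge c\,q^{\dim\mf P_r}$ via Lang--Weil on the open subset $\mf P_D\subseteq\mf L_D$. But $\mf P_D(k)=\mf L_D(k)\setminus(\mf L_D\!\smallsetminus\!\mf P_D)(k)$, and the complement --- the locus of non-geometrically-integral divisors in $|D|$ --- has complexity (number and degree of components) growing with $D$; a naive Lang--Weil upper bound on it does \emph{not} come with constants uniform in $D$. This is exactly the content of Lemma \ref{ndenlo}, which invokes Poonen's Bertini theorem over finite fields to show $\liminf_m \#\mf P_{mD_0}(k)/\#H^0(X,mD_0)>0$. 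Your proposed fix (``Hilbert data grow only polynomially in $r$'') controls the ambient $\DIV^{\cln D}_{X/k}$, but not the reducible/nonreduced locus inside it, which is what matters here. Without Poonen's theorem or an equivalent input, the lower bound on $\log p_\#(r)$ --- and likewise on $\log d^\Cc_\#(r)$ --- is missing.
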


Before giving the proof, we need some lemmata:

\begin{lemma}\label{ndenlo}
Let $D_0$ be a very ample divisor on $X$. Then \[\liminf_{m \to \infty}\frac{\# \mf P_{mD_0}(k)}{\# H^0(X, mD_0)}>0.\]
\end{lemma}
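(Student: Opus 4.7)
Set $q = \# k$ and $N_m = h^0(X, m D_0)$, so that $\# H^0(X, m D_0) = q^{N_m}$ and $\# \mf L_{m D_0}(k) = (q^{N_m}-1)/(q-1)$. Since the ratio $\# \mf L_{m D_0}(k)/q^{N_m}$ tends to the positive constant $1/(q-1)$, the lemma is equivalent to showing that $\# Z_m(k) = o(\# \mf L_{m D_0}(k))$, where $Z_m := \mf L_{m D_0} \setminus \mf P_{m D_0}$.

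A $k$-rational divisor $D \sim m D_0$ fails to be geometrically integral precisely when $D_{\bar k}$ admits a nontrivial effective decomposition $D_{\bar k} = D_1 + D_2$ over $\bar k$. Letting $\ell$ range over Picard classes on $X_{\bar k}$ with both $|\ell|_{\bar k}$ and $|m D_0 - \ell|_{\bar k}$ nonempty and $\ell \notin \{0, m D_0\}$, we have a cover $Z_m(\bar k) \subseteq \bigcup_{\ell} W_\ell(\bar k)$, where $W_\ell$ is the image of the addition map $|\ell|_{\bar k} \times |m D_0 - \ell|_{\bar k} \to |m D_0|_{\bar k}$. After accounting for Galois descent (which can swap the two summands for classes not defined over $k$), the crude bound $\# Z_m(k) \leq C \cdot \sum_\ell q^{h^0(\ell) + h^0(m D_0 - \ell)} / (q-1)^2$ holds for some absolute constant $C$.

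The essential quantitative input is the uniform estimate $h^0(\ell) + h^0(m D_0 - \ell) \leq N_m - c\, m^{d-1}$ for some $c > 0$ and all $m \gg 0$. In the \emph{bulk regime}, where both $\ell$ and $m D_0 - \ell$ are big classes of comparable degree with respect to $D_0$, strict log-concavity of the volume (Proposition \ref{logconcave}) together with asymptotic Riemann-Roch (Proposition \ref{asyRR}) yields a gap even of order $m^d$. In the \emph{boundary regime}, where (say) $\ell$ has degree bounded uniformly in $m$, we tensor the exact sequence $0 \to \O_X(-D') \to \O_X \to \O_{D'} \to 0$ with $\O_X(m D_0)$ for a representative $D' \in |\ell|$; since $D_0$ is ample, asymptotic Riemann-Roch on $D'$ forces $h^0(D', (m D_0)|_{D'})$ to grow like $m^{d-1}$, which controls the cokernel of the injection $H^0(m D_0 - \ell) \hookrightarrow H^0(m D_0)$ and supplies the desired gap of order $m^{d-1}$.

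By Corollary \ref{psefcor}, the number of relevant $\ell$'s is polynomial in $m$, so $\sum_\ell q^{h^0(\ell) + h^0(m D_0 - \ell) - N_m}$ is bounded by $\mathrm{poly}(m) \cdot q^{-c m^{d-1}}$ and tends to $0$. This gives $\# Z_m(k) = o(q^{N_m - 1}) = o(\# \mf L_{m D_0}(k))$, completing the proof. The hard part is securing the uniform gap of order $m^{d-1}$ over \emph{all} relevant $\ell$: log-concavity by itself degenerates at the boundary of the effective cone, so one must patch in the restriction-to-$D'$ analysis to cover the boundary case.
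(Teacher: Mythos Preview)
Your approach is quite different from the paper's: the paper embeds $X\hookrightarrow\mathbb P^N$ via $|D_0|$ and invokes Poonen's Bertini theorem over finite fields \cite{Poo04} to show directly that a positive proportion of degree-$m$ hypersurface sections of $X$ are normal (hence geometrically integral), verifying Serre's criterion $R_1$ and $S_2$ by hand. Your idea of bounding the complement $Z_m$ via addition maps $|\ell|\times|mD_0-\ell|\to|mD_0|$ is natural and, were it to go through, would yield the sharper statement $\#Z_m(k)/\#\mf L_{mD_0}(k)\to 0$ (your opening reduction overstates things: the lemma is only equivalent to $\limsup<1$). However, two steps do not hold up as written.

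First, you let $\ell$ range over $\Pic(X_{\bar k})$ and then invoke Corollary~\ref{psefcor} to say the number of relevant $\ell$ is polynomial in $m$. But that corollary bounds \emph{numerical} classes; as soon as $\PIC^0_{X/k}$ has positive dimension (any $X$ with $h^1(\O_X)>0$), each numerical class contains infinitely many $\ell\in\Pic(X_{\bar k})$ with $|\ell|_{\bar k}\neq\emptyset$, and your sum $\sum_\ell q^{h^0(\ell)+h^0(mD_0-\ell)}$ diverges. The accompanying ``Galois descent'' step is not an argument either: a $k$-rational $D$ which is $k$-prime but geometrically reducible admits no decomposition over $k$, so is not captured by any $|\ell|(k)\times|mD_0-\ell|(k)$; and its $\bar k$-summands are not $k$-rational, so $q^{h^0(\ell)}$ is not the right count for them. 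You would need either to bound $\#Z_m(k)$ through $\dim Z_m$ together with a degree/component bound on $Z_m\subset\mathbb P^{N_m-1}$, or to work over $k$ and treat the $k$-prime-but-geometrically-reducible divisors by a separate norm argument over $\mathbb F_{q^r}$.

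Second, your boundary-regime estimate points the wrong way. From $0\to\O_X(mD_0-D')\to\O_X(mD_0)\to\O_{D'}(mD_0)\to 0$ one gets that the cokernel of $H^0(mD_0-\ell)\hookrightarrow H^0(mD_0)$ \emph{embeds} into $H^0(D',mD_0|_{D'})$; thus the growth $h^0(D',mD_0|_{D'})\asymp m^{d-1}$ gives an \emph{upper} bound on the gap $N_m-h^0(mD_0-\ell)$, whereas you need a lower bound. For that you must show the restriction map $H^0(X,mD_0)\to H^0(D',mD_0|_{D'})$ is surjective (equivalently $H^1(X,mD_0-D')=0$), uniformly in the finitely many boundary numerical classes. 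Fujita vanishing does supply this once you observe that $m_1D_0-\ell$ is nef for a uniform $m_1$ over those classes, but this is an essential ingredient you have omitted, not a triviality.
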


\begin{proof}
Let $X \hookrightarrow \mb P^N_k$ be the projective embedding corresponding to $\O_X(D_0)$. Then for $m \geq h^0(X,\O_X)-1$, the map
\[\phi_m : S_m := H^0(\mb P^N_k, \O_{\mb P^N}(m)) \to H^0(X,mD_0)\]
is surjective (\cite[2.1]{Poo04}); thus
\[ \frac{\# \mf P_{mD_0}(k)}{\# H^0(X,mD_0)} = \frac{\# (\mc P \cap S_m)}{\# S_m},\]
where $\mc P$ denotes the set of all $f \in S_{\text{homog}} := \bigcup_{m=0}^\infty S_m$ such that the scheme-theoretic intersection $H_f \cap X$ of the hypersurface $H_f$ of $\mb P^N_k$ defined by $f$ with $X$ is geometrically integral.

For $f \in S_{\text{homog}}$ to be in $\mc P$, it is sufficient that $X \cap H_f$ is normal. For then, $X \cap H_f$ is geometrically normal (\cite[Proposition 6.7.7]{Gro65}) and geometrically connected by Grothendieck's connectedness theorem (\cite[Theorem III.7.9]{Har77}), so geometrically integral.

Using Serre's criterion (\cite[Theorem 8.2.23]{Liu02}), it is enough for us to check whether $X \cap H_f$ fulfills $R_1$ and $S_2$. This will be done in several steps. In the following, let $X^{\text{reg}}$ and $X^{\text{sing}}$ denote the regular and the singular locus of $X$, respectively (since $X$ is normal, $X^{\text{reg}}$ is smooth and $\codim(X^{\text{sing}}, X)\geq 2$).

In order to consider Serre's condition $S_2$, define
\[Z_r(Y)= \{y \in Y | \codepth \O_{Y,y} := \dim \O_{Y,y} - \depth \O_{Y,y} > r\}\]
for any Noetherian scheme $Y$. Then $Y$ fulfills $S_2$ if and only if
\[\codim(Z_r(Y),Y) > r+2 \text{ for all } r \geq 0 \quad \text{ (\cite[Proposition 5.7.4]{Gro65}).}\]
Since $X$ is normal, we have $\codim(Z_r(X),X) > r+2 \foral r \geq 0$. On the other hand, if $x \in X$ lies in $H_f$ $(f \neq 0)$, then $f$ maps to a regular element of $\f{m}_x$, so \[\codepth \O_{X,x} = \codepth \O_{X \cap H_f, x}\] by \cite[Proposition 0.16.4.10]{Gro64}; thus $Z_r(X\cap H_f)= Z_r(X) \cap H_f$. So in order for $f$ to fulfill $\codim(Z_r(X \cap H_f),X \cap  H_f) > r+2 \foral r \geq 0$ and hence $S_2$, it will be sufficient that $H_f$ intersects all irreducible components of all $Z_r(X)$ properly, i.e.\ $H_f$ does not contain any of the irreducible components of any $Z_r(X)$. There are only finitely such irreducible components, since $Z_r(X)$ is empty for $r \geq d$.

Set $Z$ to be the finite reduced subscheme of $X$ consisting of all closed points which are an irreducible component of either one of the $Z_r(X)$ or of $X^{\text{sing}}$, and set $\mc Q$ to be the set of all $f \in S_{\text{homog}}$ such that $H_f$ contains at least one of the positive dimensional irreducible components of either one of the $Z_r(X)$ or of $X^{\text{sing}}$.

We claim that
\begin{equation}\label{eqpoo1}
\frac{\# (\mc Q \cap S_m)}{\# S_m} \xrightarrow{m \to \infty} 0.
\end{equation}
This follows from the fact that given any irreducible subvariety $W$ of $\P^N_k$ of dimension $\geq 1$ and $\tilde {\mc Q} := \{f \in S_{\text{homog}} \ | \ W \subset H_f\}$, we have
\[\frac{\# (\tilde{\mc Q} \cap S_m)}{\# S_m} =  \frac{1} {\# H^0(W,O_W(m))} \xrightarrow{m \to \infty} 0\]
by Riemann-Roch.

We now turn towards the $R_1$ property. If $f \notin \mc Q$ and $H_f \cap Z = \emptyset$, then $\codim (X^{\text{sing}}\cap H_f, X \cap H_f) \geq 2$; thus $X \cap H_f$ fulfills $R_1$ if  $X^{\text{reg}}\cap H_f$ does. So it is sufficient that $X^{\text{reg}} \cap H_f$ is smooth. If $T \subset H^0(Z, \O_Z)$ is the (nonempty) set of all sections which do not vanish at any point of $Z$, we set
\[\mc P' := \{ f \in S_{\text{homog}}: H_f \cap X^{\text{reg}} \text{ is smooth, and } f|_Z \in T\}.\]
Using Poonen's Bertini theorem for finite fields (\cite[1.2]{Poo04}), we get
\begin{equation}\label{eqpoo2}
\lim_{m \to \infty} \frac{\# (\mc P' \cap S_m)}{\# S_m} = \frac{\# T}{\# H^0(Z, \O_Z)} \zeta_{X^\text{reg}}(d+1)^{-1} =: C > 0,
\end{equation}
where $\zeta_{X^\text{reg}}$ is the zeta function of $X^\text{reg}$.

Putting all these pieces of information together, we have \[\mc P \supset \mc P' - \mc Q;\]
therefore \eqref{eqpoo1} and \eqref{eqpoo2} imply
\[ \liminf_{m \to \infty} \frac{\# \mf P_{mD_0}(k)}{\# H^0(X,mD_0)} = \liminf_{m \to \infty}\frac{\# (\mc P \cap S_m)}{\# S_m} \geq C > 0.\qedhere\]
\end{proof}

\begin{lemma}\label{ndenup}
Let $H$ be a subgroup of $G$, and let $\eta' \in \NeSe{Z}^H_\R$ be an ample class. Then
\[\limsup_{r \to \infty}\frac{\log_q \# \left(\bigcup_{\beta \in \NeSe{Z}^H; \ \deg_{\eta'}(\beta) \leq r}\left(\DIV^\beta_{Z/k}\right)^H\right)(k)}{r^d / d!} \leq \frac{1}{\# H \cdot \vol(\eta')^{d-1}}.\]
\end{lemma}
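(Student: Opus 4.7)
The plan is to adapt the dimension argument of Lemma \ref{upperbound} into a cardinality count over the finite field $k$ (with $q = \# k$). For each $\beta \in \overline\Eff(Z) \cap \NeSe{Z}^H$ with $\deg_{\eta'}(\beta) \leq r$, I would use the Abel map to factor
\[(\DIV^\beta_{Z/k})^H \longrightarrow (\PIC^\beta_{Z/k})^H,\]
whose fiber over a point $\Lc$ is $\mf L_\Lc^H \subseteq \mb P(H^0(Z, \Lc))$, the $H$-fixed locus of a linear action on projective space. This fixed locus decomposes as the disjoint union of the projectivizations of the $\chi$-isotypic components $V_\chi \subseteq H^0(Z, \Lc)$ as $\chi$ ranges over $\Hom(H, k^\times)$, giving
\[\# \mf L_\Lc^H(k) \leq \#H \cdot q^{\max_\chi \dim V_\chi}.\]

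The key step is to bound $\max_\chi \dim V_\chi$ by means of Proposition \ref{Hvol}. Twisting the $H$-equivariant structure on $\Lc$ by a character $\chi$ converts the $\chi$-isotypic component into the $H$-invariant subspace of the twisted line bundle, whose underlying numerical class $\beta$ is unchanged. Applying Proposition \ref{Hvol} in the setup of Remark \ref{Hvolrem} (i.e., $W = \NeSe{Z}^H_\R$ and $S = \overline\Eff(Z) \cap W \cap \{\deg_{\eta'} = 1\}$, where $v_{max} = \vol(\eta')^{1-d}$ by Lemma \ref{degree}(c)) uniformly to every twist, I would then obtain that for any $\varepsilon > 0$ and $\deg_{\eta'}(\beta)$ sufficiently large,
\[\max_\chi \dim V_\chi \leq \frac{\vol(\eta')^{1-d}(1+\varepsilon)}{\#H} \cdot \frac{\deg_{\eta'}(\beta)^d}{d!}.\]

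For the base, $(\PIC^\beta_{Z/k})^H$ sits inside the proper scheme $\PIC^\beta_{Z/k}$ (a translate of the projective variety $\PIC^\tau_{Z/k}$), so its number of $k$-points is bounded by a constant $c_0$ independent of $\beta$. Finally, Corollary \ref{psefcor} ensures that the number of classes $\beta \in \overline\Eff(Z) \cap \NeSe{Z}^H$ with $\deg_{\eta'}(\beta) \leq r$ grows only polynomially in $r$. Summing the estimate over all such $\beta$ and taking $\log_q$, the polynomial and constant factors contribute only $O(\log r)$; dividing by $r^d/d!$ and using that $\varepsilon > 0$ was arbitrary yields the asserted inequality.

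The main obstacle is the character bookkeeping in the first paragraph: \emph{a priori} $\mf L_\Lc^H$ is not contained in $\mb P(H^0(Z, \Lc)^H)$, so Proposition \ref{Hvol} does not apply directly to each fiber. The remedy, essential to save the factor $1/\#H$ in the exponent, is the observation that every $\chi$-isotypic component arises as the $H$-invariants for a suitable twist of the equivariant structure on $\Lc$, which preserves $\cln{\Lc}$ and hence the volume bound.
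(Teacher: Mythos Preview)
Your argument is essentially the paper's own proof: bound the number of classes $\beta$ polynomially in $r$ (you invoke Corollary \ref{psefcor}, the paper uses a direct Haar-measure count), bound each fiber of the Abel map via $\#\PIC^\tau_{Z/k}(k)$ and the size of the $H$-invariant global sections, and finish with Proposition \ref{Hvol} in the setup of Remark \ref{Hvolrem}. Your explicit isotypic decomposition of $\mf L_\Lc^H$ and the character-twisting trick are in fact more careful than the paper, which simply writes $\#\mf L_\Lc^H(k) \leq \# H^0(Z,\Lc)^H$ without further comment; your version makes transparent why the bound from Proposition \ref{Hvol} really controls every component of the fiber, not just the trivial-character part.
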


\begin{proof}
Let us first give a bound on the number of $\DIV_{Z/k}^\beta$ in this inequality: Let $\mu$ be the Haar measure on $\NeSe{Z}^H_\R$ such that $\mu(\NeSe{Z}^H_\R/\NeSe{Z}^H)=1$. Then the set $\{\beta \in \overline\Eff(Z) \cap \NeSe{Z}^H_\R \ | \ \deg_{\eta'}(\beta) \leq 1\}$ is compact and convex, hence has finite measure $V$; by standard combinatorial arguments, one can deduce that for any $r \in \R_{\geq 0}$, we have
\begin{equation}\label{ndens1}
\# \bigl\{ \beta \in \overline\Eff(Z)\! \cap\! \NeSe{Z}^H \ \bigl| \ \deg_{\eta'}(\beta) \leq r \bigr\} =  V r^l + O(r^{l-1}),
\end{equation}
where $l$ is the dimension of the $\R$-vector space $\NeSe{Z}^H_\R$.

On the other hand, a $k$-rational point of $\left(\DIV_{Z/k}^{\beta}\right)^H$ maps to a $k$-rational point of $\left(\PIC_{Z/k}^{\beta}\right)^H$ under the Abel map; considering the fibers of this map, we get
\[
\# \left(\DIV_{Z/k}^{\beta}\right)^{H}\!(k) \leq \# \PIC_{Z/k}^{\tau} (k) \cdot  \max \Bigl\{ \# H^0(Z,\Lc)^H \ \Bigl| \ \Lc \in \Pic(Z)^H, \cln{\Lc}=\beta \Bigr\}
\]
using the same arguments as for the proof of \eqref{equp2}. Combining this with \eqref{ndens1}, we get
\begin{equation}\label{ndens3}\begin{split}
\log_q \Biggl(& \sum_{\beta \in \NeSe{Z}^H\!, \,\deg_{\eta'}(\beta) \leq r} \# \left(\DIV^\beta_{Z/k}\right)^H(k)\Biggr)\\&\leq C + l \log_q r + \max \Bigl\{ \dim H^0(Z,\Lc)^H \ \Bigl| \ \Lc \in \Pic(Z)^H, \deg_{\eta'}(\Lc) \leq r\Bigr\}
\end{split}\end{equation}
for some positive constant $C$. Using Proposition \ref{Hvol} in the way described in Remark \ref{Hvolrem}, this gives the assertion.
\end{proof}

\begin{proof}[Proof of Theorem \ref{dens3}]
It suffices to prove the theorem in the case when $\eta = \cln{D_0}$ for some very ample divisor $D_0$; by taking rational multiples, we then deduce the statement for any ample $\eta \in \NeSe{X}_\Q$ and finally for any ample $\eta \in \NeSe{X}_\R$ by continuity arguments.

As in the proof of Theorem \ref{dens1}, the theorem follows from the following result:

\noindent\emph{Claim:} Let $H$ be a subgroup of $G$, $Y=Z/G$. For any Cartier divisor $D$ on $X$, let $s^Y_\#(r)$ be number of Cartier prime divisors of degree at most $r$ that split in $Y$. Then
\[\lim_{r\to\infty}\frac{\log s^Y_\#(r)}{\log p_\#(r)} = \frac{1}{(G:H)^{d-1}}.\]

Let us first deal with the asymptotic behaviour of the denominator: By Lemma \ref{ndenup} (with $Z=X$, $H=1$ and $\eta'=\eta$), we have
\begin{equation}\label{eqnum1}
\limsup_{r \to \infty} \frac{\log_q p_\#(r)}{r^d/d!} \leq \frac{1}{\vol(D_0)^{d-1}}.
\end{equation}
On the other hand, $p_\#(r)$ is bounded below by $\# \mf P_{\left(\left\lfloor \frac{r}{\vol(D_0)}\right\rfloor D_0\right)}(k)$. By Lemma \ref{ndenlo}, we have
\begin{equation}\label{limpnum}
\log_q \# \mf P_{mD_0}(k) \sim \log_q \# H^0(X, mD_0) = h^0(X,mD_0) \sim \frac{\vol(D_0)}{d!} m^d,
\end{equation}
where $a_m \sim b_m$ means $a_m/b_m \xrightarrow{m\to\infty} 1$. Therefore,
\begin{equation}\label{eqnum2}
\liminf_{r \to \infty} \frac{\log_q p_\#(r)}{r^d/d!} \geq \liminf_{r \to \infty} \frac{\vol(D_0)\left\lfloor \frac{r}{\vol(D_0)}\right\rfloor^d}{r^d} = \frac{1}{\vol(D_0)^{d-1}}.
\end{equation}

We turn our attention to the asymptotic behaviour of the numerator. In a similar way as in the proof of \eqref{equp1}, we deduce
\[s^Y_\#(r) \leq \sum_{\beta \in \NeSe{Z}^H\!,\, \deg_{\eta'}(\beta) \leq \# H \cdot r + C} \# \left(\DIV^\beta_{Z/k}\right)^H(k).
\]
where $C$ is some constant and $\eta' = \cln{g^* D_0}$. Lemma \ref{ndenup} then implies that
\begin{equation}\label{eqnum3}
\limsup_{r \to \infty} \frac{\log_q s^Y_\#(r)}{r^d/d!} \leq \frac {(\# H)^{d-1}}{\vol(g^* D_0)^{d-1}}= \frac{1}{\bigl((G:H)\vol(D_0)\bigr)^{d-1}}.
\end{equation}

The lower bound follows similarly to the proof of Lemma \ref{lowerbound}: Let $v_\#(m)$ denote the number of prime divisors $E'$ in the complete linear system $|m f^* D_0|$ for which $h^* E'$ is also a prime divisor. From \eqref{limpnum} and \eqref{eqnum3}, we get
\[\log_q v_\#(m) \sim \log_q \# H^0(Y,m f^* D_0) = h^0(Y, m f^* D_0) \sim (G:H)\vol(D_0)\frac{m^d}{d!},\]
where again $a_m \sim b_m$ means $a_m/b_m \xrightarrow{m\to\infty} 1$.

If such a prime divisor $E'$ does not map to a prime divisor in $|mD_0|$ under the push-forward map $f_*$, we know by the proof of Lemma \ref{lowerbound} that $h^* E'$ has to lie in one of the spaces $|m g^* D_0|^{H'}$ for some $H \lneq H' \leq G$. The number of elements in these spaces can be bounded by Lemma \ref{ndenup} (or Proposition \ref{Hvol}) to show that asymptotically, the push-forward of almost every Cartier divisor counted by $v_\#(m)$ is a prime divisor in $|(G:H)m D_0|$. In particular,
\begin{equation}\label{eqnum4}
\begin{split}
\liminf_{r \to \infty} \frac{\log_q s^Y_\#(r)}{r^d/d!}
& \geq \liminf_{r \to \infty} \frac{\log_q \left(\frac{1}{(G:H)} v_\#\Bigl( \left\lfloor \frac{r}{(G:H) \vol(D_0)}\right\rfloor\Bigr)\right)}{r^d/d!} \\
& = \frac{1}{\bigl((G:H)\vol(D_0)\bigr)^{d-1}}.
\end{split}
\end{equation}
Combining \eqref{eqnum1}, \eqref{eqnum2}, \eqref{eqnum3} and \eqref{eqnum4}, our theorem follows.
\end{proof}

\subsection{The Theorem of Bauer-Schmidt}

\begin{theo}\label{bauer}
Let $X, Y, Z$ be normal geometrically integral quasiprojective varieties of dimension $d \geq 2$ over a field $k$ of characteristic zero; let $f: Y \to X$ be a finite branched cover, $g : Z \to X$ a finite branched Galois cover. Fix an integer $r$ with $0 < r < d$. Then the following are equivalent:
\begin{enumerate}[(a)]
\item $f: Y \to X$ factors through $g: Z \to X$.
\item Every point $x \in X$ of codimension $r$ that splits in $Y$ splits in $Z$.
\end{enumerate}
\end{theo}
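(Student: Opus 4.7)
}} The direction (a) $\Rightarrow$ (b) is immediate: if $f = g \circ h$ for some morphism $h: Y \to Z$ and $y \in Y$ lies above $x$ with $k(y) = k(x)$, then $z := h(y)$ satisfies $g(z) = x$ and $k(x) \subseteq k(z) \subseteq k(y) = k(x)$, forcing $k(z) = k(x)$; unramifiedness passes from $f$ to $g$ for the same reason. For the converse I argue contrapositively via a common Galois envelope. Let $\tilde{Y} \to X$ be the Galois closure of $f$, and let $W \to X$ be the Galois closure over $X$ of the compositum $\tilde{Y} \cdot Z$, with Galois group $\Gamma$. By Galois correspondence, $Y = W/H$ and $Z = W/N$ for some subgroup $H \leq \Gamma$ and some normal subgroup $N \trianglelefteq \Gamma$, and (a) is equivalent to the inclusion $H \subseteq N$. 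Assuming (a) fails, fix $\sigma \in H \setminus N$. By Hilbert decomposition theory, a codimension-$r$ point $x \in X$ splits in $Y$ iff some $\Gamma$-conjugate of its decomposition group $D_x \leq \Gamma$ lies in $H$, and splits in $Z$ iff $D_x \subseteq N$ (a conjugacy-invariant condition by normality of $N$). It therefore suffices to exhibit a codimension-$r$ point $x$ whose decomposition group is $\Gamma$-conjugate to $\langle \sigma \rangle$: such an $x$ splits in $Y$ since $\sigma \in H$, but fails to split in $Z$ since $\sigma \notin N$, contradicting (b).

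The case $r = 1$ follows directly from the density Theorem \ref{dens1}. Compactify $X$ to a normal projective variety $\bar{X}$ and let $\bar{W}$ be the normalization of $\bar{X}$ in $k(W)$; then $\bar{W} \to \bar{X}$ is a finite branched Galois cover of normal projective geometrically integral varieties with group $\Gamma$, and the characteristic-zero hypothesis supplies a $\Gamma$-equivariant resolution of singularities of $\bar{W}$, so the hypotheses of Theorem \ref{dens1} are met. Taking $\Cc$ to be the $\Gamma$-conjugacy class of $\langle \sigma \rangle$, positivity of the asymptotic density $(\Gamma : \Cc)^{1-d}$ forces $\mf{D}^\Cc_{mD_0}$ to be non-empty (indeed of positive dimension) for $m \gg 0$, and since $k$ is infinite this scheme has $k$-rational points outside the proper closed locus of divisors contained in $\bar{X} \setminus X$. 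Such a point corresponds to a geometrically integral prime Cartier divisor $D$ on $X$, unramified in $W$ and with decomposition group $\Gamma$-conjugate to $\langle \sigma \rangle$, which by the setup above splits in $Y$ but not in $Z$. For $r > 1$, I reduce to $r = 1$ by choosing a generic codimension-$(r{-}1)$ linear section $X' \subseteq X$ in a projective embedding. Standard Bertini-type results yield, for sufficiently general $X'$, that $X'$ is normal and geometrically integral of dimension $d - r + 1 \geq 2$, that the base changes $Y', Z', W'$ inherit the same properties, and that $W' \to X'$ is Galois with the same group $\Gamma$ and the same distinguished subgroups $H, N$. Codimension-$1$ points of $X'$ are precisely codimension-$r$ points of $X$ lying on $X'$, and splitting of such a point in $Y/X$ is equivalent to splitting in $Y'/X'$ (and similarly for $Z$); hence hypothesis (b) for $(X, Y, Z, r)$ restricts to hypothesis (b) for $(X', Y', Z', 1)$, and the $r = 1$ case applied to the slice produces the sought codimension-$r$ point of $X$.

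The principal technical obstacle lies in the Bertini reduction from $r > 1$ to $r = 1$: one has to preserve \emph{simultaneously} irreducibility, normality and geometric integrality of $X', Y', Z', W'$, the Galois group $\Gamma$ of the restricted compositum, both distinguished subgroups $H$ and $N$, and the precise equivalence between splitting behaviour at a point viewed in $(X, Y, Z)$ and in the slice $(X', Y', Z')$. Each individual check is classical in characteristic zero (Bertini irreducibility as in Proposition \ref{bertini}, together with the behaviour of finite branched covers under generic complete intersections), but the combined verification requires some care. Once the reduction is in place, Theorem \ref{dens1} does essentially all of the remaining work, by guaranteeing that every conjugacy class of subgroups of $\Gamma$ -- in particular the cyclic class generated by any chosen $\sigma \in H \setminus N$ -- is realised as the geometric decomposition class of a prime Cartier divisor on the compactified cover.
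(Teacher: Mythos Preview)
Your overall architecture matches the paper's: pass to a common Galois envelope $W\to X$ with group $\Gamma$, translate (a) into $H\subseteq N$, and then produce a codimension-$r$ point whose decomposition group witnesses $H\not\subseteq N$. Two points deserve comment.

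\textbf{A gap in the $r=1$ step.} You assert that, because $\dim\mf D^{\Cc}_{mD_0}>0$ and $k$ is infinite, $\mf D^{\Cc}_{mD_0}$ has $k$-rational points, and that such a point yields a divisor with decomposition group conjugate to $\langle\sigma\rangle$. Neither claim is justified as stated. First, $\mf D^{\Cc}_{mD_0}$ is only \emph{locally closed} in the projective space $\mf L_{mD_0}$ (it is open in the closed set $\mf S^{Y}_{mD_0}\subset\mf P'_{mD_0}$), and a positive-dimensional locally closed subscheme of $\mathbb P^N_k$ over an infinite field need not have any $k$-points (think of a pointless conic). Second, even when a $k$-point exists, by definition it encodes only the \emph{geometric} decomposition class; the actual decomposition group over $k$ may be strictly larger than $\langle\sigma\rangle$, and then you lose the conclusion that the divisor splits in $Y$. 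The paper handles both issues at once by invoking the \emph{proof} of Lemma~\ref{lowerbound}: one works on $W/\!\langle\sigma\rangle$ (the paper uses $W/H$, but your cyclic choice works equally well), takes the dense open $\mf V_m\subset\mf L_{E_m}$, and uses that a dense open of a projective space over an infinite field has $k$-points; for such $E'$ one has $h^*E'$ prime on $W$, so $f_*E'$ has decomposition group \emph{exactly} (a conjugate of) the chosen subgroup. You should route your argument through that lemma rather than through the bare nonemptiness of $\mf D^{\Cc}_{mD_0}$.

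\textbf{The reduction for $r>1$.} Here you and the paper genuinely diverge. The paper does not slice by generic linear sections; instead it applies the density theorem once more, with $\Cc$ the class of the full group $\Gamma$, to find a geometrically integral divisor $D$ on $X$ that stays prime in $W$. Restricting the cover to $D$ gives a Galois cover with the \emph{same} group $\Gamma$ in one lower dimension, and one inducts. This is shorter and entirely self-contained: it avoids the simultaneous Bertini checklist (normality and geometric integrality of $X',Y',Z',W'$, preservation of $\Gamma$, $H$, $N$, and the splitting equivalences) that you correctly flag as the main burden of your route. Your Bertini approach is valid in characteristic zero, but the paper's inductive trick is both cleaner and reuses the main theorem rather than importing an external toolkit.
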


\begin{proof}
The implication $(a) \Ra (b)$ is immediate. For the converse, embed $K(Y)$ and $K(Z)$ into some algebraic closure $\overline{K(X)}$ of $K(X)$, and let $L$ be the smallest Galois extension of $K(X)$ inside $\overline{K(X)}$ containing both $K(Y)$ and $K(Z)$. Let $W$ be the normalization of $X$ in $L$. Then $W$ is a normal geometrically integral quasiprojective variety of dimension $d$ over $k$, and if we set $G= \Gal(L|K(X))$, $H= \Gal(L|K(Y))$, $N = \Gal(L|K(Z))$, then $W/G \cong X$, $W/H \cong Y$ and $W/N \cong Z$ because of the normality conditions. We have to show that if $f: Y \to X$ does not factor through $g: Z \to X$ (or, equivalently, $H \not\subseteq N$), there is a point of codimension $r$ in $X$ that splits in $Y$ but does not split in $Z$.

Applying Proposition \ref{exptsdec} below to the Galois cover $W \to X$ and the conjugacy class $\Cc$ of $H$ inside $G$, we get infinitely many points of codimension $r$ in $X$ that are unramified in $W$ and have $\Cc$ as their decomposition class. By construction, every such point splits in $Y$; if any such point split in $Z$, then some conjugate of $H$ would have to be a subgroup of $N$. As $N$ is normal in $G$, this implies $H \subseteq N$, contradiction.
\end{proof}

\begin{prop}\label{exptsdec}
Let $f: Z \to X$ be a finite branched Galois cover (with Galois group $G$) of geometrically integral quasiprojective varieties of dimension $d \geq 2$ over a field $k$ of characteristic zero. Then for any positive integer $r < d$ and for any conjugacy class $\Cc$ of subgroups of $G$, there are infinitely many points of codimension $r$ in $X$ that are unramified in the cover $Z \to X$ and have decomposition class $\Cc$.
\end{prop}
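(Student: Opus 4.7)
The plan is to proceed by induction on $r$, with the base case $r=1$ reducing directly to Theorem~\ref{dens1} and the inductive step cutting down $\bar X$ by a divisor of "totally inert" type. First I would compactify: take a normal projective compactification $\bar X \supset X$ and let $\bar Z$ be the normalization of $\bar X$ in $K(Z)$, so $\bar Z \to \bar X$ is a finite branched Galois cover with group $G$. Hironaka's $G$-equivariant resolution (available in characteristic zero) provides the hypotheses for Theorem~\ref{dens1}. Only finitely many codimension-$r$ points of $\bar X$ lie in $\bar X \setminus X$ or in the ramification locus of $\bar Z \to \bar X$, so it suffices to produce infinitely many codimension-$r$ points of $\bar X$ with decomposition class $\Cc$ in $\bar Z \to \bar X$.

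For the inductive step ($r \geq 2$), apply Theorem~\ref{dens1} to $\bar Z \to \bar X$ with the trivial conjugacy class $\{G\}$ to obtain a geometrically integral prime Cartier divisor $H$ on $\bar X$ with geometric decomposition class $\{G\}$. For this particular class the geometric and actual decomposition classes automatically agree: if the geometric decomposition group equals $G$, then $g^{-1}(H_{\bar k})$ consists of a single $G$-stable prime divisor, which forces $g^{-1}(H) \subset \bar Z$ itself to be a geometrically integral prime divisor with decomposition group $G$. Using Bertini in characteristic zero, also arrange that $H$ is smooth, avoids the ramification locus, and lies inside $X$. After normalization and $G$-equivariant resolution, the restriction $\bar W \to H$ is a finite branched Galois cover with group $G$ of dimension $d - 1 \geq 2$, to which the induction hypothesis applies with index $r - 1 < d - 1$ and the same conjugacy class $\Cc$. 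This yields infinitely many codimension-$(r-1)$ points $y$ of $H$ with decomposition class $\Cc$ in $\bar W \to H$. Since the closed immersion $\bar W \hookrightarrow \bar Z$ is $G$-equivariant, the stabilizer in $G$ of a point of $\bar W$ over $y$ coincides with its stabilizer as a point of $\bar Z$, so $y$ viewed as a codimension-$r$ point of $\bar X$ has decomposition class $\Cc$ in $\bar Z \to \bar X$.

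For the base case $r = 1$, Theorem~\ref{dens1} directly produces infinitely many geometrically integral prime Cartier divisors $D'$ on $\bar X$ with \emph{geometric} decomposition class $\Cc$, while the proposition demands the \emph{actual} decomposition class to be $\Cc$. This gap is the main obstacle. If $E'$ is a prime divisor of $\bar Z$ over $D'$ and $E'_{\bar k} = \tilde E'_1 \cup \dots \cup \tilde E'_t$, Galois theory shows that each $D_{\tilde E'_j}$ is a subgroup of $D_{E'}$ of index $t$, so geometric and actual classes coincide precisely when $t = 1$, i.e.\ when $E'$ is geometrically integral. To ensure this, I would invoke a characteristic-zero Bertini-type irreducibility result applied to the $G$-equivariant pulled-back linear system $|m g^* D_0|$ on $\bar Z$: for $m \gg 0$ and a Zariski-dense open subset of $D' \in \mf D^\Cc_{mD_0}$, each irreducible component of $g^{-1}(D')$ is geometrically integral. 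Establishing this Bertini-irreducibility step in the $G$-equivariant setting and verifying that the resulting dense open subset really meets the positive-dimensional scheme $\mf D^\Cc_{mD_0}$ is the principal technical hurdle.
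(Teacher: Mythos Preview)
Your overall architecture matches the paper's exactly: induction on $r$, compactify and invoke Theorem~\ref{dens1} for $r=1$, and for the inductive step cut by a geometrically integral divisor with decomposition class $\{G\}$ so that its preimage in $Z$ stays geometrically integral.

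The one place where you diverge is the handling of the gap between \emph{geometric} and \emph{actual} decomposition class in the base case. The paper does not need the Bertini-type irreducibility argument you sketch. Instead it observes that the proof of Lemma~\ref{lowerbound} already produces, over $k$ itself, infinitely many prime divisors on $X$ with actual decomposition class~$\Cc$: one works on $Y=Z/H$, takes a generic member $E'$ of the linear system $|E_m|$ (a projective space over $k$, hence with plenty of $k$-points since $\ch k=0$), and the argument there shows that for a dense open set of such $E'$ the pullback $h^*E'$ is a prime divisor on $Z$ with stabiliser exactly $H$ and $f_*E'$ is a prime divisor on $X$. This gives the actual decomposition group directly, with no passage through $\bar k$. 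So your ``principal technical hurdle'' dissolves once you look back at how Lemma~\ref{lowerbound} is proved rather than only at its statement.

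Two smaller remarks. First, your reduction ``only finitely many codimension-$r$ points of $\bar X$ lie in $\bar X\setminus X$'' is false for $r\geq 2$; the paper avoids this by compactifying only for the base case $r=1$, where the claim is correct, and running the inductive step entirely inside the original quasiprojective $X$. Second, your extra Bertini requirements on the inert divisor $H$ (smoothness, normalisation, equivariant resolution of the restricted cover) are not needed: the proposition only asks for geometrically integral quasiprojective varieties, so one may take $X_1=D$ and $Z_1=g^{-1}(D)$ as they stand and apply the induction hypothesis directly.
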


\begin{proof}
The proof will be done by induction on $r$.

In the case $r=1$, by completing and normalizing we may assume that $Z$ and $X$ are normal and projective; since this adds only finitely many points of codimension one, our claim remains unchanged. Under these assumptions, the claim follows from Remark \ref{remratpts}.

Now assume $r>1$. By Theorem \ref{dens1}, we know there exists a geometrically integral divisor $D$ on $X$ which is unramified in $Z$ and whose decomposition class is the (conjugacy class of) the full group $G$, in other words, $D$ stays prime in $Z$. Take $X_1$ to be the closed subscheme of $X$ corresponding to $D$ and let $Z_1$ be the preimage in $Z$. Then $Z_1 \to X_1$ is a Galois cover of geometrically integral varieties over $k$ of dimension $d-1 > r-1 \geq 1$. By induction hypothesis, there are infinitely many points of codimension $r-1$ in $X_1$ which have decomposition class $\Cc$ in $Z_1$. Since all these points are points of codimension $r$ in $X$ that have decomposition class $\Cc$ in $Z$, we are done.
\end{proof}

\begin{cor}\label{corbauer}
Let $f: Y \to X$ be a finite branched cover of normal geometrically integral quasiprojective varieties of dimension $d \geq 2$ over a field $k$ of characteristic zero. Fix an integer $r$ with $0 < r < d$. Then the following are equivalent:
\begin{enumerate}[(a)]
\item $f$ is a Galois cover.
\item Every point $x \in X$ of codimension $r$ that splits in $Y$ splits completely, i.e.\ (it is unramified in $Y$ and) for every point $y \in Y$ with $f(y)=x$, we have $\kappa(y)=\kappa(x)$.
\end{enumerate}
\end{cor}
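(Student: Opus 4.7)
The direction (a)$\Rightarrow$(b) is immediate from Hilbert's decomposition theory: if $f$ is Galois with group $G$ and some $y \in f^{-1}(x)$ has $k(y) = k(x)$, then its decomposition group $G_y$ is trivial, and since the other preimages $y' = \sigma y$ have decomposition groups $\sigma G_y \sigma^{-1} = 1$, every such $y'$ also satisfies $k(y') = k(x)$. Hence $x$ splits completely. So the content is the converse.

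For (b)$\Rightarrow$(a), I will let $g: Z \to X$ be the Galois closure of $f$, constructed as in Remark \ref{makeGal}: the normalization of $X$ in the Galois closure of $K(Y)|K(X)$ inside some algebraic closure of $K(X)$. This gives a tower with $K(X) \subseteq K(Y) \subseteq K(Z)$. First I check that $Z$ meets the hypotheses of Theorem \ref{bauer}: it is normal and finite (hence quasiprojective) over $X$, of dimension $d$, and geometrically integral—since $\operatorname{char} k = 0$ makes $k$ perfect, $K(Y)\otimes_k \bar k$ is a field, and its Galois closure $K(Z)\otimes_k \bar k$ over $K(X)\otimes_k \bar k$ is again a field. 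The plan is then to verify hypothesis (b) of Theorem \ref{bauer} for the pair $(f, g)$. This yields that $f$ factors through $g$, i.e.\ $K(Z) \subseteq K(Y)$; combined with the reverse inclusion built into $Z$, this gives $K(Y) = K(Z)$, whence $Y \cong Z$ over $X$ (both being normal integral schemes finite over $X$ with the same function field), so $f$ is Galois.

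What remains is the group-theoretic step. Let $x \in X$ be a codim-$r$ point that splits in $Y$; by hypothesis it splits completely in $Y$. Set $G = \Gal(K(Z)|K(X))$ and $H = \Gal(K(Z)|K(Y))$; by the defining property of the Galois closure, $\bigcap_{\sigma \in G} \sigma H \sigma^{-1} = \{1\}$. Pick $w \in Z$ above $x$ with decomposition group $D$ and inertia group $I$. By Hilbert's decomposition theory, the points of $Y$ above $x$ are parametrized by the double cosets $H \backslash G / D$; the point corresponding to $H \sigma D$ is unramified over $x$ iff $I \subseteq \sigma^{-1} H \sigma$, and (in that case) has residue degree $(D : D \cap \sigma^{-1} H \sigma)$. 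Complete splitting in $Y$ therefore forces $D \subseteq \sigma^{-1} H \sigma$ for every $\sigma \in G$, hence $D \subseteq \bigcap_\sigma \sigma^{-1} H \sigma = 1$. Thus $D = 1$, so $x$ is unramified in $Z$ and every point of $Z$ above $x$ has residue field $k(x)$, i.e.\ $x$ splits in $Z$.

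I do not expect any serious obstacle. The two points requiring care are (i) verifying that the Galois closure $Z$ really is geometrically integral (where characteristic zero is used essentially) so that Theorem \ref{bauer} applies, and (ii) the bookkeeping with double cosets $H\backslash G/D$ that turns "splits completely in $Y$" into the containment $D \subseteq \bigcap_\sigma \sigma H \sigma^{-1}$, which is precisely the group-theoretic reason why Galois closures absorb completely split points.
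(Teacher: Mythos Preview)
Your proof is correct and follows essentially the same approach as the paper: take the Galois closure $g:Z\to X$, use the decomposition-theoretic fact that a point splitting completely in $Y$ splits (completely) in $Z$, then apply Theorem~\ref{bauer} to conclude $K(Z)\subseteq K(Y)$ and hence $Y\cong Z$. The paper simply cites this decomposition-theoretic fact as well known (referring to Neukirch), whereas you spell out the double-coset argument and the verification that $Z$ is geometrically integral; these are welcome details but not a different route.
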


\begin{proof}
The implication $(a) \Ra (b)$ is immediate. For the converse, let $g: Z \to X$ be the Galois closure of $Y \to X$ (see Remark \ref{makeGal}), and let  $G$ and $H$ be the Galois groups of $g: Z \to X$ and $h:Z \to Y$, respectively. We have to show $Z = Y$ (or equivalently $G=H$). The proof follows from Theorem \ref{bauer} and the following simple and well-known fact from Hilbert's decomposition theory: A point $x \in X$ splits completely in $Y$ if and only if it splits (completely) in $Z$ (the argument can be found e.g.\ in \cite[proof of Corollary VI.3.8]{Neu99}).
\end{proof}

\begin{rem}\label{remschmidt}
The statements of \ref{bauer} -- \ref{corbauer} hold more generally for finite branched covers of normal quasiprojective varieties of dimension $d \geq 2$ over a field $k$ of arbitrary characteristic. This can be deduced from \cite{Sch34}: Using the fact that the function field $k(t)$ is hilbertian, the proof of Theorem C of \emph{loc.cit.}\ implies the case $r=1$ of Proposition \ref{exptsdec}. The rest of the statements then follows in the same way as above.
\end{rem}

\addcontentsline{TOC}{section}{Bibliography}

\bibliography{references}
\bibliographystyle{alpha}

\end{document}